%---------------------------------------------------------------------------
\documentclass[11pt]{article}
%---------------------------------------------------------------------------
\usepackage{amsmath,amssymb,amsthm}
\usepackage[ruled,boxed]{algorithm}
\usepackage{algorithmic}
\usepackage[margin=1.6in]{geometry}

\newcommand{\tensor}[1]{\mathcal{#1}}

\newcommand{\abs}[1]{\left|#1\right|}
\newcommand{\norm}[1]{\left\|#1\right\|}

\newcommand{\R}{\mathbb{R}}

\newcommand{\N}{\mathbb{N}}

\newcommand{\E}{\mathbb{E}}
\newcommand{\Prob}{\mathbb{P}}
\newcommand{\V}{\mbox{\bf Var}}
\newcommand{\Sphere}{\mathbb{S}}
\newcommand{\sgn}[1]{\textbf{sign}\left({#1}\right)}

\newtheorem{thm}{Theorem}
\newtheorem{corol}{Corollary}

\newtheorem{lem}{Lemma}
\newtheorem{definition}{Definition}

\usepackage{hyperref}
\hypersetup{colorlinks=true}

\addtolength{\hoffset}{-1.3cm} \addtolength{\textwidth}{3cm}
\addtolength{\voffset}{-1.3cm} \addtolength{\textheight}{3cm}
\addtolength{\textfloatsep}{-5pt} \addtolength{\floatsep}{-5pt}
\allowdisplaybreaks

%---------------------------------------------------------------------------

\begin{document}
\date{}

\title{Tensor sparsification via a bound on the spectral norm of random tensors}

\author{
Nam Nguyen
\thanks{
Department of Mathematics,
Massachusetts Institute of Technology,
namnguyen@math.mit.edu. 
Most of the work was done while the author was a graduate student at Johns Hopkins University.
}
\and
Petros Drineas
\thanks{
Department of Computer Science,
Rensselaer Polytechnic Institute,
drinep@cs.rpi.edu.
}
\and
Trac Tran
\thanks{
Department of Electrical and Computer Engineering,
The Johns Hopkins University,
trac@jhu.edu.
}
}

\maketitle

\begin{abstract}
{Given an order-$d$ tensor $\tensor A \in \R^{n \times n \times \ldots \times n}$, we present a simple, element-wise sparsification algorithm that zeroes out all sufficiently small elements of $\tensor A$, keeps all sufficiently large elements of $\tensor A$, and retains some of the remaining elements with probabilities proportional to the square of their magnitudes. We analyze the approximation accuracy of the proposed algorithm using a powerful inequality that we derive. This inequality bounds the spectral norm of a random tensor and is of independent interest. As a result, we obtain novel bounds for the tensor sparsification problem. }%As an added bonus, we obtain improved bounds for the matrix ($d=2$) sparsification problem.}
{tensor; tensor norm;tensor sparsification;fast tensor computation;random tensor}
\end{abstract}

\section{Introduction}

Technological developments over the last two decades (in both scientific and internet domains) permit the automatic generation of very large data sets. Such data are often modeled as matrices, since an $m \times n$ real-valued matrix $A$ provides a natural structure to encode information about $m$ objects, each of which is described by $n$ features. A generalization of this framework permits the modeling of the data by higher-order arrays or tensors (e.g., arrays with more than two modes). A natural example is time-evolving data, where the third mode of the tensor represents time~\cite{FKS07}. Numerous other examples exist, including tensor applications in higher-order statistics, where tensor-based methods have been leveraged in the context of, for example, Independent Components Analysis (ICA), in order to exploit the statistical independence of the sources~\cite{DDV00a,DDV00b,DDV00c}.

A large body of recent work has focused on the design and analysis of algorithms that efficiently create small ``sketches'' of matrices and tensors. By sketches, we mean a new matrix or tensor with significantly smaller size than the original ones. Such sketches are subsequently used in eigenvalue and eigenvector computations~\cite{FKV98,AM01}, in data mining applications~\cite{MMD06,MMD08,DM07,MD09}, or even to solve combinatorial optimization problems~\cite{ADKK03,DKKV05,DKM08}. Existing approaches include, for example, the selection of a small number of rows and columns of a matrix in order to form the so-called CUR matrix/tensor decomposition~\cite{DMM08a,MMD06,MMD08}, as well as random-projection-based methods that employ fast randomized variants of the Hadamard-Walsh transform~\cite{Sar06} or the Discrete Cosine Transform~\cite{NDT09}.

An alternative approach was pioneered by Achlioptas and McSherry in 2001~\cite{AM01,AM07} and leveraged the selection of a small number of elements in order to form a sketch of the input matrix. A rather straight-forward extension of their work to tensors was described by Tsourakakis in~\cite{Tso09}. Another remarkable direction was pioneered in the work of Spielman, Teng, Srivastava, and collaborators~\cite{SS08,BSS09}, who proposed algorithms for graph sparsification in order to create preconditioners for systems of linear equations with Laplacian input matrices. Partly motivated by their work, we define the following matrix/tensor sparsification problem:
\begin{definition}\label{def:matrixsparsification}\textsc{[Matrix/tensor Sparsification]}
Given an order-$d$ tensor $\tensor A \in \R^{n \times n \times \ldots \times n}$ and an error parameter $\epsilon \geq 0$, construct a sketch $\tilde{\tensor A} \in \R^{n \times n \times \ldots \times n}$ such that
\begin{equation}
\label{ineq::spectral norm bound}
\norm{\tensor A - \tilde{\tensor A}}_2 \leq \epsilon\norm{\tensor A}_2
\end{equation}
and the number of non-zero entries in $\tilde{\tensor A}$ is minimized. Here, the $\norm{\tensor A}_2$ norm is called the spectral norm of the tensor $\tensor A$ (see Section~\ref{eqt::tensor spectral norm definition} for the definition).
\end{definition}
\noindent A few comments are necessary to better understand the above definition. First, an order-$d$ tensor is simply a $d$-way array (obviously, a matrix is an order-2 tensor). We let $\norm{\cdot}_2$ denote the spectral norm of a tensor (see Section~\ref{sxn:notation} for notation), which is a natural extension of the matrix spectral norm. It is worth noting that exactly computing the tensor spectral norm is computationally hard. Second, a similar problem could be formulated by seeking a bound for the Frobenius norm of $\tensor A - \tilde{\tensor A}$. Third, this definition places no constraints on the form of the entries of $\tilde{\tensor A}$. However, in this work, we will focus on methods that return matrices and tensors $\tilde{\tensor A}$ whose entries are either zeros or (rescaled) entries of $\tensor A$. Prior work has investigated quantization as an alternative construction for the entries of $\tilde{\tensor A}$, while the theoretical properties of more general methods remain vastly unexplored. Fourth, the running time needed to construct a sketch is not restricted. All prior work has focused on the construction of sketches in one or two sequential passes over the input matrix or tensor. Thus, we are particularly interested in sketching algorithms that can be implemented within the same framework (a small number of sequential passes).

We conclude this section by discussing applications of the sparse sketches of Definition~\ref{def:matrixsparsification}. In the case of matrices, there are at least three important applications: approximate eigenvector computations, semi-definite programming (SDP) solvers, and matrix completion. The first two applications are based on the fact that, given a vector $x \in \R^n$, the product $\tensor A x$ can be approximated by $\tilde{\tensor A} x$ with a bounded loss in accuracy. The running time of the latter matrix-vector product is proportional to the number of non-zeros in $\tilde{\tensor A}$, thus leading to immediate computational savings. This fast matrix-vector product operation can then be used to approximate eigenvectors and eigenvalues of matrices~\cite{AM01,AM07,AHK06} via subspace iteration methods; yet another application would be a quick estimate of the Krylov subspace of a matrix. Additionally~\cite{AHK05,Asp09} argue that fast matrix-vector products are useful in SDP solvers. The third application domain of sparse sketches is the so-called matrix completion problem, an active research area of growing interest, where the user only has access to $\tilde{\tensor A}$ (typically formed by sampling a small number of elements of $\tensor A$ uniformly at random) and the goal is to reconstruct the entries of $\tensor A$ as accurately as possible. The motivation underlying the matrix completion problem stems from recommender systems and collaborative filtering and was initially discussed in~\cite{AFKMS01}. More recently, methods using bounds on $\tensor A-\tilde{\tensor A}$ and trace minimization algorithms have demonstrated exact reconstruction of $\tensor A$ under -- rather restrictive -- assumptions~\cite{CR09,CT09}. We expect that our work here will stimulate research towards generalizing matrix completion to tensor completion. More specifically, our tensor spectral norm bound could be a key ingredient in analyzing tensor completion algorithms, just like similar bounds for matrix sparsification were critical in matrix completion~\cite{CR09,CT09}. Finally, similar applications in recommendation systems, collaborative filtering, monitoring IP traffic patterns over time, etc. exist for the $d>2$ case in Definition~\ref{def:matrixsparsification}; see~\cite{Tso09,MMD06,MMD08} for details.

\subsection{Our algorithm and our main theorem}

\begin{algorithm}%[H]
\centerline{\caption{Tensor Sparsification Algorithm}}
\begin{algorithmic}[1]
\STATE \textbf{Input:} order-$d$ tensor $\tensor A \in \R^{n \times n \ldots \times n}$, sampling parameter $s$.

\textbf{For all} $i_1, ..., i_d \in [n]\times \ldots \times [n]$ \textbf{do}
\begin{itemize}
\item \textbf{If} $\tensor A_{i_1 ...i_d}^2 \leq \frac{\ln^d n}{n^{d/2}} \frac{\norm{\tensor A}_F^2}{s}$ \textbf{then}
    $$  \widetilde{\tensor A}_{i_1 ...i_d} = 0,
    $$
\item \textbf{ElseIf} $\tensor A_{i_1 ...i_d}^2 \geq \frac{\norm{\tensor A}_F^2}{s}$ \textbf{then}
    $$  \widetilde{\tensor A}_{i_1 ...i_d} = \tensor A_{i_1 ...i_d},
    $$
\item \textbf{Else}
    $$
    \widetilde{\tensor A}_{i_1 ...i_d} =
    \begin{cases}
    \frac{\tensor A_{i_1 ...i_d}}{p_{i_1 ...i_d}} & \text{,with probability     } p_{i_1 ...i_d} = \frac{s \tensor A^2_{i_1 ...i_d}}{\norm{\tensor A}^2_F}\\
    0 & \text{,with probability     } 1 - p_{i_1 ...i_d} \\
    \end{cases}
    $$
\end{itemize}
\STATE \textbf{Output:} Tensor $\widetilde{\tensor A} \in \R^{n \times n \ldots \times n}$.
\end{algorithmic}
\end{algorithm}

Our main algorithm (Algorithm 1) zeroes out ``small'' elements of the tensor $\tensor A$, keeps ``large'' elements of the tensor $\tensor A$, and randomly samples the remaining elements of the tensor $\tensor A$ with a probability that depends on their magnitude. The following theorem is our main quality-of-approximation result for Algorithm 1.

\begin{thm}
\label{thm::main theorem of tensor sparsification}
Let $\tensor A \in \R^{n \times...\times n}$ be an order-$d$ tensor and let $\widetilde{\tensor A}$ be constructed as described in Algorithm 1. Assume that $n \geq 320$. For $d \geq 3$, if the sampling parameter $s$ satisfies
\begin{equation}
\label{inq::upper bound of s with d >= 3}
s =\Omega\left( \frac{d^3 20^{2d} n^{d/2} \ln^d n}{\epsilon^2} \max \left\{ 1, \frac{\ln^{d+1} n}{n^{d/2-1}} \right\}  \norm{\tensor A}_F^2 \right),
\end{equation}
then, with probability at least $1 - n^{-2d}$,
$$
\norm{\tensor A - \widetilde{\tensor A}}_2 \leq \epsilon,
$$
where the tensor spectral norm $\norm{\cdot}_2$ is defined in (\ref{eqt::tensor spectral norm definition}). For $d=2$, the same spectral norm bound holds whenever the sampling parameter $s$ satisfies
\begin{equation}
\label{inq::upper bound of s with d = 2}
s =\Omega\left( \frac{ n \ln^5 n}{\epsilon^2} \norm{\tensor A}_F^2 \right).
\end{equation}
\end{thm}
\noindent The number of samples $s$ in Theorem \ref{thm::main theorem of tensor sparsification} involves the tensor Frobenius norm. In the following corollary, we restate the theorem by using the stable rank of a tensor, denoted by $\text{sr} \left(\tensor A\right)$. The stable rank of a tensor is defined analogously to the stable rank of a matrix, namely the ratio
$$\text{sr}\left(\tensor A\right) \triangleq \frac{\norm{\tensor A}_F^2}{\norm{\tensor A}_2^2}.$$
\begin{corol}
\label{cor::matrix sparsification}
Let $\tensor A \in \R^{n \times n}$ (assume $n \geq 320$) be an order-$d$ tensor and let $\widetilde{\tensor A}$ be constructed as described in Algorithm 1. If $n \geq \ln^8 n$ and the sampling parameter $s$ is set to
$$s = \Omega\left( \frac{d^2 20^{2d} n^{d/2} \ln^d n}{\epsilon^2} \text{sr} (\tensor A) \right),$$
then, with probability at least $1 - n^{-2d}$,
$$
\norm{\tensor A - \widetilde{\tensor A}}_2 \leq \epsilon \norm{\tensor A}_2.
$$
For $d=2$, the sampling parameter $s$ is simplified to  $s = \Omega\left( \frac{ n \ln^5 n}{\epsilon^2} \text{sr} (A) \right).$
\end{corol}
In both Theorem~\ref{thm::main theorem of tensor sparsification} and Corollary~\ref{cor::matrix sparsification}, $\tilde{\tensor A}$ has, in expectation, at most $2s$ non-zero entries and the construction of $\tilde{\tensor A}$ can be implemented in one pass over the input tensor/matrix $\tensor A$. Towards that end, we need to combine Algorithm~1 with the \textsc{Sample} algorithm presented in Section 4.1 of \cite{AM07}. Finally, in the context of Definition~\ref{def:matrixsparsification}, our result essentially shows that we can get a sparse sketch $\tilde{\tensor A}$ with $2s$ non-zero entries. In Theorem \ref{thm::main theorem of tensor sparsification} and Corollary~\ref{cor::matrix sparsification}, we have not made any attempt to optimize the constants which could potentially be reduced. In addition, when $n \geq \ln^8 n$, the maximum value in (\ref{inq::upper bound of s with d >= 3}) is at most one and the sampling parameter can be simplified to $s =\Omega\left( \frac{n^{d/2} \ln^d n}{\epsilon^2} \text{sr} (\tensor A) \right)$. Ignoring the poly$log$ factor, the theorem implies that out of the $n^d$ entries of the tensor, the algorithm only needs to selectively keep $\Omega(n^{d/2} \text{sr} (\tensor A))$ entries and zero out the rest, while accurately approximating the spectral norm of the original tensor.

Finally, we discuss our bound in light of the so-called Kruskal and Tucker rank of a tensor. Let $\text{kr}\left(\tensor A\right)$ be the Kruskal rank of the $d$-mode tensor $\tensor A$; see~\cite{Kolda2009} for the definition of the Kruskal rank and notice that the Kruskal rank is equal to the matrix rank when $d$ is equal to two. It is known that the number of degrees of freedom of a tensor is of the order $n \text{kr} \left(\tensor A\right)$. While, in general, the inequality $\text{sr}\left(\tensor A\right) \leq \text{kr}\left(\tensor A\right)$ does not hold, it does hold for the $d=2$ case as well as for some tensors that can be orthogonally decomposed~\cite{Kolda2001}. Another better way to bound the stable rank of a tensor is via the Tucker decomposition, which is similar to singular value decomposition of a matrix (see \cite{Kolda2009} for the definition). Decompose the order-$d$ tensor $\tensor A$ via
$$
\tensor A = \sum_{i_1=1}^{k_1} \cdots \sum_{i_d=1}^{k_d} g_{i_1 \cdots i_d} u_{i_1} \times_1 \cdots \times_d v_{i_d} = \tensor G \times_1 U \cdots \times V 
$$
where $U$,..., $V$ are orthogonal matrices of size $n \times k_1$, ..., $n \times k_d$, respectively; $\tensor G$ is the core tensor of size $k_1 \times \cdots \times k_d$. Here, the tensor-vector product is defined later in Section \ref{sxn:notation}. The tuple ($k_1,...,k_d$) is called the Tucker rank of the tensor $\tensor A$ where each $k_i$ is the column rank of the matrix $A_{(i)}$ constructed by unfolding $\tensor A$ along the $i$th direction. It can be easily seen that the degree of freedom of $\tensor A$ is roughly $n \sum_{i=1}^d k_i + \prod_{i=1}^d k_i$. In addition, the tensor Frobenius norm is 
$$
\norm{\tensor A}_F^2 = \norm{\tensor G}_F^2 \leq \left( \prod_{i=1}^d k_i \right) \max_{i_1,...,i_d} g^2_{i_1 \cdots i_d},
$$
and the spectral norm of $\tensor A$ (see Section \ref{sxn:notation} for the definition) is crudely lower bounded by $\max_{i_1,...,i_d} g_{i_1 \cdots i_d}$. Combining these two bounds and the fact that $\norm{\tensor A}_F \geq \norm{\tensor A}$ yield
$$
1 \leq \text{sr}\left(\tensor A\right) \leq  \prod_{i=1}^d k_i .
$$
In these situations, Corollary~\ref{cor::matrix sparsification} essentially implies that in order for the sampled tensor to be close to the original one, the number of samples required is at most on the order of $\Omega (n^{d/2} \prod_{i=1}^d k_i)$, which is proportional to $\Omega(n^{d/2})$ for low Tucker rank tensor. This bound is substantially larger than the tensor's degree of freedom $n \sum_{i=1}^d k_i + \prod_{i=1}^d k_i$. An open question is whether the $d/2$ power in the number of samples can be removed?

%An open question is whether the number of samples can be reduced to be proportional to the number of degrees of freedom of the tensor $n \sum_{i=1}^d k_i + \prod_{i=1}^d k_i$.

% proportional to the number of degrees of freedom of the tensor.

\subsection{Comparison with prior work}

To the best of our knowledge, for $d>2$, there exists no prior work on element-wise tensor sparsification that provides results comparable to Theorem~\ref{thm::main theorem of tensor sparsification}. It is worth noting that the work of~\cite{Tso09} deals with the Frobenius norm of the tensor, which is much easier to manipulate, and its main theorem is focused on approximating the so-called HOSVD of a tensor, as opposed to decomposing the tensor as a sum of rank-one components.

For the $d=2$ case, prior work does exist and we will briefly compare our results in Corollary~\ref{cor::matrix sparsification} with current state-of-the-art. In summary, our result in Corollary~\ref{cor::matrix sparsification} outperforms prior work, in the sense that, using the same accuracy parameter $\epsilon$ in Definition~\ref{def:matrixsparsification}, the resulting matrix $\tilde{\tensor A}$ has fewer non-zero elements. In \cite{AM01,AM07} the authors presented a sampling method that requires at least $O(\textbf{st}\left(\tensor A\right) n \ln^4 n/\epsilon^2)$ non-zero entries in $\tilde{\tensor A}$ in order to achieve the proposed accuracy guarantee. (Here $\textbf{st}\left(\tensor A\right)$ denotes the stable rank of the matrix $\tensor A$ that is always upper bounded by the rank of $\tensor A$.) Our result increases the sampling complexity by a $\ln n$ factor. This increment is due to the more general model (tensor) we consider. In \cite{BSS09, SS08} the authors proposed sparsification schemes for structural Laplacian matrix and thus required smaller amount of non-zero entries, while our method can apply for any matrix $\tensor A$ with no restriction on its structure. It is harder to compare our method to the work of~\cite{AHK06}, which depends on the $\sum_{i,j=1}^n \abs{\tensor A_{ij}}$. The latter quantity is, in general, upper bounded only by $n \norm{\tensor A}_F$, in which case the sampling complexity of~\cite{AHK06} is much worse, namely $O(\textbf{st}\left(\tensor A\right) n^{3/2}/\epsilon)$. However, it is worth noting that the result of~\cite{AHK06} is appropriate for matrices whose ``energy'' is focused only on a small number of entries, as well as that their bound holds with much higher probability than ours.

In parallel with our work, two related results appeared in ArXiv. First,~\cite{GT09} studied the $\norm{\cdot}_{\infty \rightarrow 2}$ and $\norm{\cdot}_{\infty \rightarrow 1}$ norms in the matrix sparsification context. The authors also presented a sampling scheme for the problem of Definition~\ref{def:matrixsparsification}. Additionally,~\cite{DZ09} leveraged a powerful matrix Bernstein inequality and improved the sampling complexity of Corollary~\ref{cor::matrix sparsification} by an $O (\ln^2 n)$ factor. Subsequently to our work,~\cite{AKL13} presented an alternative approach to~\cite{DZ09} that is based on $\ell_1$ sampling, e.g., sampling with respect to the absolute values of the entries of a matrix as opposed to their squares. However, neither of the aforementioned results generalizes to tensors. Indeed, establishing analogous bounds for $d$-mode tensors is a major open problem.

\subsection{Bounding the spectral norm of random tensors}

An important contribution of our work is the technical analysis and, in particular, the proof of a bound for the spectral norm of random tensors that is necessary in order to prove Theorem~\ref{thm::main theorem of tensor sparsification}. It is worth noting that all known results for the $d=2$ case of Theorem~\ref{thm::main theorem of tensor sparsification} are either combinatorial in nature (e.g., the proofs of~\cite{AM01,AM07} are based on the result of~\cite{FK81}, whose proof is fundamentally combinatorial) or use simple $\epsilon$-net arguments~\cite{AHK06}. The only exceptions are the recent results in~\cite{DZ09,GT09} which leverage powerful Bernstein and Chernoff-type inequalities for matrices \cite{Tropp2012friendly}. It is also important to emphasize that over the last few years, there are active research in establish sharp bound for the sum of random matrices \cite{AW2002,Oli2010,Tropp2012friendly} (see the tutorial paper \cite{Tropp2015tutorial} of Tropp for more references). As stated above, none of these approaches can be extended to the $d > 2$ case; indeed, the $d>2$ case seems to require novel tools and methods. In our work, we are only able to prove the following theorem using the so-called \textit{entropy-concentration tradeoff}, an analysis technique that was originally developed by Latala \cite{Latala_2005_RandomMatrix_journal} and has been recently investigated by Mark Rudelson and Roman Vershynin~\cite{Rudelson_Vershynin_2008_RandomRecMatrix_journal,Vershynin_SpectralNormBA_2009}. The following theorem presents a spectral norm bound for random tensors and is fundamental in proving Theorem~\ref{thm::main theorem of tensor sparsification}.

\begin{thm}
\label{thm::bound tensor (E norm(B) ^q)^(1/q)}
Let $\widehat{\tensor A} \in \R^{n \times ...\times n}$ be an order-$d$ tensor and let $\tensor A$ be a random tensor of the same dimensions whose entries are independent and $\E \tensor A = \widehat{\tensor A}$. For any $\lambda \leq \frac{1}{64}$, assume that $1 \leq q \leq 2d \lambda n \ln \frac{5e}{\lambda}$. Then,
\begin{equation*}
\begin{split}
\left( \E \norm{\tensor A - \widehat{\tensor A}}_2^q \right)^{\frac{1}{q}} \leq & c 8^d \sqrt{2d \ln \left(\frac{5e}{\lambda} \right)} \left( \left[ \log_2 \left( \frac{1}{\lambda} \right) \right]^{d-1} \left( \sum_{j=1}^d \E_{\tensor A} \alpha_j^q \right)^{\frac{1}{q}} + \sqrt{\lambda n} \left( \E_{\tensor A} \beta^q \right)^{\frac{1}{q}} \right),
\end{split}
\end{equation*}
where
$$
\alpha_j^2 \triangleq \max_{i_1,\ldots,i_{j-1},i_{j+1},\ldots,i_d} \left(\sum_{i_j=1}^n \tensor A_{i_1 \ldots i_{j-1} i_j i_{j+1} ... i_d}^2\right) \quad \text{and} \quad \beta = \max_{i_1,...,i_d} |\tensor A_{i_1...i_d}|.
$$
In the above inequality, $c$ is a small constant and $\norm{\cdot}_2$ refers to the tensor spectral norm defined in Section~\ref{eqt::tensor spectral norm definition}.
\end{thm}

\noindent An immediate corollary of the above theorem emerges by setting tensor $\widehat{\tensor A}$ to zero.
\begin{corol}\label{cor::bound tensor (E norm(B) ^q)^(1/q)}
Let $\tensor B \in \R^{n \times ... \times n}$ be a random order-$d$ tensor, whose entries are independent, zero-mean, random variables. For any $\lambda \leq \frac{1}{64}$, assume that $1 \leq q \leq 2d \lambda n \ln \frac{5e}{\lambda}$. Then,
\begin{equation*}
\left( \E \norm{\tensor B}_2^q \right)^{\frac{1}{q}} \leq c 8^d \sqrt{2d \ln \left(\frac{5e}{\lambda} \right)} \left( \left[ \log_2 \left( \frac{1}{\lambda} \right) \right]^{d-1} \left( \sum_{j=1}^d \E_{\tensor B} \alpha_j^q \right)^{\frac{1}{q}} + \sqrt{\lambda n} \left( \E_{\tensor B} \beta^q \right)^{\frac{1}{q}} \right),
\end{equation*}
where
$$
\alpha_j^2 \triangleq \max_{i_1,\ldots,i_{j-1},i_{j+1},\ldots,i_d} \left(\sum_{i_j=1}^n \tensor B_{i_1 \ldots i_{j-1} i_j i_{j+1} ... i_d}^2\right) \quad \text{and} \quad \beta = \max_{i_1,...,i_d} |\tensor B_{i_1...i_d}|.
$$
In the above inequality, $c$ is a small constant and $\norm{\cdot}_2$ refers to the tensor spectral norm defined in Section~\ref{eqt::tensor spectral norm definition}.
\end{corol}

\noindent As will be clear in the proof, the parameter $\lambda$ defines the entropy-concentration tradeoff. Depending on particular properties of the random tensor $\tensor B$, one can set the parameter $\lambda$ so that the bound on the right-hand side is optimized. In particular, when the entries of $\tensor B$ are of similar magnitudes (formally, $\max_j \alpha^2_j = c_1 n \beta^2$), we can choose $\lambda$ to be a small constant. (Note that we always have $\max_j \alpha^2_j \leq n \beta^2$.) In this case, we have a simplified result.

\begin{corol}\label{cor::bound tensor (E norm(B) ^q)^(1/q) - fixed lambda}
Let $\tensor B \in \R^{n \times ... \times n}$ be a random order-$d$ tensor, whose entries are independent, zero-mean, random variables. Assume that $1 \leq q \leq C d n$. Also, assume that $ c_1 n \beta^2 \leq \max_j \alpha^2_j \leq C_1 n \beta^2$. Then,
\begin{equation*}
\left( \E \norm{\tensor B}_2^q \right)^{\frac{1}{q}} \leq c_d 8^d \sqrt{d}  \left( \sum_{j=1}^d \E_{\tensor B} \max_{i_1,\ldots,i_{j-1},i_{j+1},\ldots,i_d} \left(\sum_{i_j=1}^n \tensor B_{i_1 \ldots i_{j-1} i_j i_{j+1} ... i_d}^2\right)^{\frac{q}{2}} \right)^{\frac{1}{q}}.
\end{equation*}
In the above inequality, $c_d$ is a small constant depending on $d$ and $\norm{\cdot}_2$ refers to the tensor spectral norm defined in Section~\ref{eqt::tensor spectral norm definition}.
\end{corol}

\noindent We note that this bound is optimal since $\norm{\tensor B}_2$ is always lower bounded by
$$\max_{i_1,\ldots,i_{j-1},i_{j+1},\ldots,i_d} \left(\sum_{i_j=1}^n \tensor B_{i_1 \ldots i_{j-1} i_j i_{j+1} ... i_d}^2\right)^{\frac{1}{2}} .$$ 
We also note that for the matrix case ($d=2$), the result of Corollary~\ref{cor::bound tensor (E norm(B) ^q)^(1/q) - fixed lambda} has a very similar structure with the result of~\cite{Latala_2005_RandomMatrix_journal}. In fact, our proof strategy is borrowed from~\cite{Latala_2005_RandomMatrix_journal}, with significant modifications in order to adapt it to higher-order tensors. For a general random tensor, we can use the crude bound $\beta \leq \max_j \alpha_j$ and also set $\lambda = \frac{(\ln n)^{2(d-1)}}{n}$. Then, the following corollary provides a bound for the spectral norm of the random tensor.

\begin{corol}\label{cor::bound tensor (E norm(B) ^q)^(1/q) - fixed lambda 2nd}
Let $\tensor B \in \R^{n \times ... \times n}$ be a random order-$d$ tensor, whose entries are independent, zero-mean, random variables. Assume that $1 \leq q \leq C d \ln n$. Then,
\begin{equation*}
\left( \E \norm{\tensor B}_2^q \right)^{\frac{1}{q}} \leq c_d 8^d \left( \ln n \right)^{d-1/2} \left( \sum_{j=1}^d \E_{\tensor B} \max_{i_1,\ldots,i_{j-1},i_{j+1},\ldots,i_d} \left(\sum_{i_j=1}^n \tensor B_{i_1 \ldots i_{j-1} i_j i_{j+1} ... i_d}^2\right)^{\frac{q}{2}} \right)^{\frac{1}{q}}.
\end{equation*}
In the above inequality, $c_d$ is a small constant depending on $d$ and $\norm{\cdot}_2$ refers to the tensor spectral norm defined in Section~\ref{eqt::tensor spectral norm definition}.
\end{corol}

\section{Preliminaries}

\subsection{Notation}\label{sxn:notation}

We will use $[n]$ to denote the set $\left\{1,2,\ldots ,n\right\}$. $c_0$, $c_1$, $c_2$, etc. will denote small numerical constants, whose values change from one section to the next. $\E X$ will denote the expectation of a random variable $X$. When $X$ is a matrix, then $\E X$ denotes the element-wise expectation of each entry of $X$. Similarly, $\V\left(X\right)$ denotes the variance of the random variable $X$ and $\Prob \left({\cal E}\right)$ denotes the probability of event ${\cal E}$. Finally, $\ln x$ denotes the natural logarithm of $x$ and $\log_2 x$ denotes the base two logarithm of $x$.

We briefly remind the reader of vector norm definitions. Given a vector $x \in \R^n$ the $\ell_2$ norm of $x$ is denoted by $\norm{x}_2$ and is equal to the square root of the sum of the squares of the elements of $x$. Also, the $\ell_0$ norm of the vector $x$ is equal to the number of non-zero elements in $x$. Finally, given a Lipschitz function $f: \R^n \mapsto \R$ we define the Lipschitz norm of $f$ to be
$$
\norm{f}_{L} = \sup_{x, y \in \R^n} \frac{\abs{ f(x) - f(y) }} {\norm{x-y}_2}.
$$
For any $d$-mode or order-$d$ tensor $\tensor A \in \R^{n \times \ldots \times n}$, its Frobenius norm $\norm{\tensor A}_F$ is defined as the square root of the sum of the squares of its elements. We now define tensor-vector products as follows: let $x, y$ be vectors in $\R^n$. Then,
\begin{eqnarray*}
\tensor A \times_{1} x &=& \sum_{i=1}^n \tensor A_{i j k \ldots \ell} x_i,\\
\tensor A \times_{2} x &=& \sum_{j=1}^n \tensor A_{i j k \ldots \ell} x_j,\\
\tensor A \times_{3} x &=& \sum_{k=1}^n \tensor A_{i j k \ldots \ell} x_k, \text{ etc. }
\end{eqnarray*}
Note that the outcome of the above operations is an order-$(d-1)$ tensor. The above definition may be extended to handle multiple tensor-vector products, e.g.,
$$ \tensor A \times_1 x \times_2 y  = \sum_{i=1}^n \sum_{j=1}^n \tensor A_{ijk \ldots \ell} x_{i} y_{j}.$$
Note that the outcome of the above operation is an order-$(d-2)$ tensor. Using this definition, the spectral norm of a tensor is defined as
\begin{equation}
\label{eqt::tensor spectral norm definition}
\norm{\tensor A}_2 = \sup_{x_1 \ldots x_d \in \Sphere^n} \abs{\tensor A \times_1 x_1 \ldots \times_d x_d},
\end{equation}
where $\Sphere^n$ is the unit sphere in $n$-dimensional space. In words, the vectors $x_i \in \R^n$ are unit vectors, i.e., $\norm{x_i}_2=1$ for all $i \in [d]$. It is worth noting that $\tensor A \times_1 x_1 \ldots \times_d x_d \in \R$ and also that our tensor norm definitions when restricted to matrices (order-2 tensors) coincide with the standard definitions of matrix norms.

We also present an inequality that will be useful in our work. For any two $d$-mode tensors $\tensor A$ and $\tensor B$ of the same dimensions and any scalar $q \geq 1$,
\begin{equation}
\label{inq::bound sum of tensor spectral norm}
\norm{\tensor A + \tensor B}_2^q \leq 2^{q-1} (\norm{\tensor A}_2^q + \norm{\tensor B}_2^q).
\end{equation}
The proof is quite simple. Notice that for nonnegative scalars $x$ and $y$, $(x+y)^q \leq 2^{q-1} \left(x^q + y^q\right)$ for $q \geq 1$ (see Lemma~\ref{lem::bound sum of power of q} for a more general proof). Thus, for any $x_1,...,x_d \in \Sphere^n$,
$$
\abs{\tensor A \times_1 x_1 \ldots \times_d x_d + \tensor B \times_1 x_1 \ldots \times_d x_d}^q \leq 2^{q-1} \abs{\tensor A \times_1 x_1 \ldots \times_d x_d}^q + 2^{q-1} \abs{\tensor B \times_1 x_1 \ldots \times_d x_d}^q.
$$
Taking the maximum of both sides completes the proof.

\subsection{Measure concentration}

We will need the following version of Bennett's inequality.
\begin{lem}
\label{thm::Bennett's inequality}
Let $X_1$, $X_2$,..., $X_n$ be independent, zero-mean, random variables with $\abs{X_i} \leq 1$.
For any $t \geq \frac{3}{2}\sum_{i=1}^n \V (X_i) >0$
$$
\Prob \left( \sum_{i=1}^n X_i > t  \right) \leq e^{- t/2 }.
$$
\end{lem}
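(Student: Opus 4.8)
The plan is to use the standard exponential-moment (Chernoff) method, specialized to the regime $t \geq \frac{3}{2}\sum_{i=1}^n \V(X_i)$ in which a single, fixed choice of the free parameter already yields the clean bound $e^{-t/2}$. Write $S = \sum_{i=1}^n X_i$ and $\sigma^2 = \sum_{i=1}^n \V(X_i)$. For any $\lambda > 0$, Markov's inequality applied to $e^{\lambda S}$ together with the independence of the $X_i$ gives $\Prob(S > t) \leq e^{-\lambda t}\prod_{i=1}^n \E e^{\lambda X_i}$, so the whole argument reduces to controlling each moment generating factor and then making a good choice of $\lambda$.

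First I would bound each factor $\E e^{\lambda X_i}$. Since $\E X_i = 0$ and $\abs{X_i} \leq 1$, expanding the exponential and using $\E X_i^k \leq \E X_i^2 = \V(X_i)$ for every integer $k \geq 2$ (which holds precisely because $\abs{X_i}^k \leq X_i^2$ when $\abs{X_i} \leq 1$) yields $\E e^{\lambda X_i} \leq 1 + \V(X_i)\left(e^\lambda - 1 - \lambda\right) \leq \exp\!\left(\V(X_i)(e^\lambda - 1 - \lambda)\right)$. Multiplying over $i$ produces the uniform estimate $\Prob(S > t) \leq \exp\!\left(-\lambda t + \sigma^2(e^\lambda - 1 - \lambda)\right)$, valid for every $\lambda > 0$.

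The final step is to choose $\lambda$ and invoke the hypothesis. Because $e^\lambda - 1 - \lambda > 0$ for $\lambda > 0$, the hypothesis $\sigma^2 \leq \frac{2}{3}t$ lets me replace $\sigma^2$ by $\frac{2}{3}t$, bounding the exponent by $t\bigl[-\lambda + \frac{2}{3}(e^\lambda - 1 - \lambda)\bigr]$. I would then fix $\lambda = \ln(5/2)$ — which is exactly the minimizer in the boundary case $t = \frac{3}{2}\sigma^2$, since the optimal parameter is $\ln(1 + t/\sigma^2)$ — and verify the elementary numerical inequality $-\ln(5/2) + \frac{2}{3}\bigl(\frac{3}{2} - \ln(5/2)\bigr) \leq -\frac{1}{2}$, equivalently $e^\lambda - \frac{5}{2}\lambda \leq \frac{1}{4}$ at $\lambda = \ln(5/2)$. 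This holds with a small positive margin, and gives $\Prob(S > t) \leq e^{-t/2}$.

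The only real content beyond bookkeeping is the per-variable moment generating bound, and the one thing to watch there is the boundedness condition: the step $\E X_i^k \leq \V(X_i)$ uses $\abs{X_i} \leq 1$ crucially. I would also record explicitly that no lower bound on $\sigma^2$ is needed, since the constraint $t \geq \frac{3}{2}\sigma^2$ enters only to absorb the term $\sigma^2(e^\lambda - 1 - \lambda)$ against the linear term $\lambda t$. I expect the thinness of the margin in the closing numerical inequality to be the one place demanding care, but this is a finite, checkable computation rather than a genuine obstacle.
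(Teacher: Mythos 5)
Your proposal is correct, but it takes a genuinely different route from the paper's. The paper's proof is a two-line reduction: it cites the standard form of Bennett's inequality, $\Prob\left(\sum_i X_i > t\right) \leq e^{-\sigma^2 h(t/\sigma^2)}$ with $h(u) = (1+u)\ln(1+u) - u$, and then invokes the calculus fact that $h(u) \geq u/2$ for $u \geq 3/2$; that inequality is the entire content. You instead re-derive the pre-optimization Chernoff bound $\Prob(S > t) \leq \exp\left(-\lambda t + \sigma^2(e^\lambda - 1 - \lambda)\right)$ from scratch — your MGF estimate via $\E X_i^k \leq \V(X_i)$ for $k \geq 2$ is exactly where $\abs{X_i}\leq 1$ enters, and it is correct — and then, rather than optimizing over $\lambda$ (which is what produces the function $h$ in the first place), you fix $\lambda = \ln(5/2)$, the optimizer at the boundary case $t = \tfrac{3}{2}\sigma^2$. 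Your substitution of $\tfrac{2}{3}t$ for $\sigma^2$ is legitimate precisely because the exponent is linear in $\sigma^2$ with positive coefficient $e^\lambda - 1 - \lambda$; this step plays the role that monotonicity of $h(u) - u/2$ on $u \geq 3/2$ plays in the paper's route. Your closing numerical check is also right: at $\lambda = \ln(5/2)$ one gets $-\ln(5/2) + \tfrac{2}{3}\left(\tfrac{3}{2} - \ln(5/2)\right) \approx -0.527 \leq -\tfrac{1}{2}$, equivalently $\tfrac{5}{2}\left(1 - \ln\tfrac{5}{2}\right) \approx 0.209 \leq \tfrac{1}{4}$, so the margin, while modest, is real. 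What the paper's approach buys is brevity, at the cost of leaning on the standard Bennett inequality as a black box; what yours buys is a self-contained, fully elementary argument that never needs the Bennett function $h$ or a calculus argument over an entire range — a single numerical evaluation at one point suffices.
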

\noindent This version of Bennett's inequality can be derived from the standard one, stating that
$$\Prob \left( \sum_{i=1}^n X_i > t  \right) \leq e^{- \sigma^2 h \left( t / \sigma^2 \right)}.$$
Here $\sigma^2 = \sum_{i=1}^n \V (X_i)$ and $h(u) = (1+u) \ln (1+u) - u$. Lemma~\ref{thm::Bennett's inequality} follows using the fact that $h(u) \geq u/2$ for $u \geq 3/2$.
We also remind the reader of the following well-known result on measure concentration (see, for example, eqn. (1.4) of \cite{Ledoux_Talagrand_1991_ProbBanachSpace_book}).

\begin{lem} \label{thm::Gaussian concentration}
Let $f: \R^n \mapsto \R$ be a Lipschitz function and let $\norm{f}_{L}$ be its Lipschitz norm. If $g \in \R^n$ is a standard Gaussian vector (i.e., a vector whose entries are independent standard Gaussian random variables), then for all $t > 0$
$$
\Prob \left( f (g) \geq \E f(g) +  t\sqrt{2} \norm{f}_{L} \right) \leq e^{-t^2}.
$$
\end{lem}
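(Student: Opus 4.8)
The plan is to reduce the tail bound to a sharp estimate on the Laplace transform of $f(g)$ and then apply a Chernoff bound. By homogeneity I may assume $\norm{f}_L = 1$, and by replacing $f$ with $f - \E f(g)$ I may assume $\E f(g) = 0$ (this shift is harmless, since it changes neither the Lipschitz norm nor the event in the statement). Writing $s = t\sqrt{2}$, the conclusion becomes exactly $\Prob(f(g) \ge s) \le e^{-s^2/2}$, so it suffices to prove the sub-Gaussian bound
$$ \E e^{\lambda f(g)} \le e^{\lambda^2/2} \quad \text{for all } \lambda > 0. $$
Indeed, Markov's inequality then gives $\Prob(f(g) \ge s) \le e^{-\lambda s + \lambda^2/2}$, and optimizing over $\lambda$ (the choice $\lambda = s$) yields $e^{-s^2/2}$. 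Note that only $\lambda > 0$ is needed, because the lemma asserts a one-sided tail.

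Before establishing the Laplace-transform bound I would first pass to a smooth $f$. A $1$-Lipschitz function can be uniformly approximated by smooth $1$-Lipschitz functions (for instance by convolving with a Gaussian mollifier of shrinking bandwidth, which preserves the Lipschitz constant), and the relevant quantities are continuous under this approximation. Hence it is enough to treat the case $f \in C^\infty(\R^n)$ with $\norm{\nabla f(x)}_2 \le 1$ for every $x$, which lets me differentiate freely in the next step.

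The heart of the argument is the Herbst argument driven by the Gaussian logarithmic Sobolev inequality: if $\gamma$ denotes the standard Gaussian measure on $\R^n$ and $\mathrm{Ent}_\gamma(\cdot)$ the associated entropy functional, then for every smooth $h$,
$$ \mathrm{Ent}_\gamma\!\left(h^2\right) \le 2\,\E \norm{\nabla h}_2^2. $$
Applying this with $h = e^{\lambda f/2}$ and writing $H(\lambda) = \E e^{\lambda f(g)}$, the right-hand side becomes $\tfrac{\lambda^2}{2}\,\E\!\left(\norm{\nabla f}_2^2\, e^{\lambda f}\right) \le \tfrac{\lambda^2}{2}H(\lambda)$ (using $\norm{\nabla f}_2 \le 1$), while the left-hand side equals $\lambda H'(\lambda) - H(\lambda)\ln H(\lambda)$. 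This produces the differential inequality $\lambda H'(\lambda) - H(\lambda)\ln H(\lambda) \le \tfrac{\lambda^2}{2}H(\lambda)$. Setting $K(\lambda) = \lambda^{-1}\ln H(\lambda)$ converts it into $K'(\lambda) \le \tfrac{1}{2}$; combined with the boundary value $\lim_{\lambda \to 0^+} K(\lambda) = H'(0)/H(0) = \E f(g) = 0$, integration gives $K(\lambda) \le \lambda/2$, that is $\ln H(\lambda) \le \lambda^2/2$, which is precisely the sub-Gaussian bound sought above.

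The main obstacle is the Gaussian logarithmic Sobolev inequality itself, which carries the sharp constant $2$ responsible for the exact exponent $e^{-t^2}$ in the statement; any looser tool (for example a crude interpolation-plus-Jensen argument) would degrade this constant. I would either invoke it as a standard fact or prove it by tensorization from the one-dimensional case, which in turn can be obtained from the two-point (Bernoulli) log-Sobolev inequality via the central limit theorem, or directly through Gaussian integration by parts. The remaining ingredients—the mollification, the computation of the two sides of the inequality under the substitution $h = e^{\lambda f/2}$, and the boundary analysis of the resulting ordinary differential inequality as $\lambda \to 0^+$—are routine.
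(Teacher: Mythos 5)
Your proof is correct, but note that the paper never proves this lemma: it is invoked as a standard measure-concentration fact with a pointer to eqn.~(1.4) of the Ledoux--Talagrand book, so there is no internal proof to compare against, and your argument supplies a genuine self-contained one. The Herbst argument via the Gaussian logarithmic Sobolev inequality is a legitimate route, and importantly it recovers the statement with the \emph{exact} constant: after the normalizations $\norm{f}_L = 1$, $\E f(g) = 0$, $s = t\sqrt{2}$, the claim is $\Prob(f(g)\ge s)\le e^{-s^2/2}$, which is precisely what the optimized Chernoff bound yields from $\ln \E e^{\lambda f(g)} \le \lambda^2/2$; your entropy identity $\mathrm{Ent}_\gamma(e^{\lambda f}) = \lambda H'(\lambda) - H(\lambda)\ln H(\lambda)$, the substitution $h = e^{\lambda f/2}$ with the sharp log-Sobolev constant $2$, the resulting inequality $K'(\lambda)\le 1/2$ for $K(\lambda)=\lambda^{-1}\ln H(\lambda)$, and the boundary value $K(0^+)=\E f(g)=0$ are all correct. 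For comparison, the cited source derives the inequality from Gaussian isoperimetry (Borell, Sudakov--Tsirel'son), which is also sharp but requires the isoperimetric theorem, whereas the more elementary Maurey--Pisier interpolation argument would only give a bound of the form $e^{-2t^2/\pi^2}$ and hence would \emph{not} literally reproduce the lemma with the $t\sqrt{2}$ normalization -- so your insistence on the sharp log-Sobolev constant is exactly right. Two points you gesture at should be recorded explicitly: $H(\lambda)=\E e^{\lambda f(g)}$ is finite and differentiable for every $\lambda$ because $\abs{f(x)}\le\abs{f(0)}+\norm{x}_2$ gives Gaussian-dominated integrands (this also justifies $H'(0)=\E f(g)$); and the passage from smooth to general Lipschitz $f$ via mollification needs the standard remark that uniform convergence yields the tail bound first at every threshold $s'<s$ and then at $s$ by continuity of the right-hand side. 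Both are routine, as you say, and neither affects correctness.
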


\noindent The following lemma, whose proof may be found in the Appendix, converts a probabilistic bound for the random variable $X$ to an expectation bound for $X^q$, for all $q \geq 1$, and might be of independent interest.

\begin{lem} \label{lem::convert from probabilistic bound to expectation bound}
Let $X$ be a random variable assuming non-negative values. For all $t\geq 0$ and non-negative $a$, $b$, and $h$:

\noindent \textbf{(a)} If
$
\Prob\left(X \geq a + t b\right) \leq e^{-t + h},
$
then, for all $q \geq 1$,
$$
\E X^q \leq 2 (a + bh + bq)^q.
$$

\noindent \textbf{(b)} If
$
\Prob(X \geq a + t b) \leq e^{-t^2 + h},
$
then, for all $q \geq 1$,
$$
\E X^q \leq 3\sqrt{q}\left(a + b\sqrt{h} + b\sqrt{q/2}\right)^q.
$$
\end{lem}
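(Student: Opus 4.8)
The plan is to prove both parts by the standard device of integrating a tail bound against the layer-cake (distribution-function) formula for moments, $\E X^q = \int_0^\infty q t^{q-1}\,\Prob(X \geq t)\,dt$, and then controlling the resulting integral by a Gamma-function estimate. For part \textbf{(a)}, I would first substitute $u = a + tb$ in the hypothesis $\Prob(X \geq a + tb) \leq e^{-t+h}$ so that the tail is expressed directly in terms of the threshold level $u$: this gives $\Prob(X \geq u) \leq e^{h} e^{-(u-a)/b} = e^{h + a/b} e^{-u/b}$ for $u \geq a$. Splitting the moment integral at $u = a$ (the region below $a$ contributes at most $a^q$, and more conveniently we can simply bound $\E X^q \leq (2a)^q$-type contributions by absorbing them), the tail region becomes $\int_a^\infty q u^{q-1} e^{h+a/b} e^{-u/b}\,du$, which after the change of variable $v = u/b$ is a scaled incomplete Gamma integral $\sim b^q \Gamma(q+1)$ up to the exponential prefactor.

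The key inequality driving part \textbf{(a)} will be the bound $\Gamma(q+1) = q! \leq q^q$ (or the slightly sharper Stirling-type estimate), which converts the factorial coming from the integral into the $(\cdots + bq)^q$ shape of the claimed bound; the additive $bh$ term arises from bundling the $e^{h}$ prefactor, and the factor $2$ absorbs both the low-threshold contribution below $a$ and the rounding in the Gamma estimate. Part \textbf{(b)} is structurally identical but driven by the Gaussian-type tail $e^{-t^2 + h}$: here the substitution produces an integral of the form $\int q u^{q-1} e^{h} e^{-((u-a)/b)^2}\,du$, which after the change of variable $w = ((u-a)/b)^2$ reduces to $\Gamma(q/2)$ rather than $\Gamma(q)$. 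The square-root scaling of the Gaussian tail is exactly what produces the $b\sqrt{q/2}$ and $b\sqrt{h}$ terms in place of the linear $bq$ and $bh$ terms, and the prefactor $3\sqrt{q}$ comes from the Stirling estimate $\Gamma(q/2) \lesssim \sqrt{q}\,(q/2)^{q/2}$ together with the crude constants absorbed along the way.

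I expect the main obstacle to be bookkeeping rather than any deep idea: specifically, getting the additive decomposition into the clean form $(a + bh + bq)^q$ (resp.\ $(a + b\sqrt{h} + b\sqrt{q/2})^q$) requires repeatedly using the convexity/superadditivity inequality $(x+y+z)^q \geq x^q + y^q + z^q$ (valid for nonnegative terms and $q \geq 1$) in the \emph{reverse} direction to split the Gamma estimate across the three summands, and one must be careful that each crude constant stays inside the promised overall factor of $2$ (resp.\ $3\sqrt{q}$). A secondary subtlety is handling the threshold region $u < a$, where the tail bound hypothesis is either vacuous or gives a probability exceeding $1$; there I would simply bound $\Prob(X \geq u) \leq 1$ and check that $\int_0^a q u^{q-1}\,du = a^q$ is comfortably dominated by the leading term. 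Since the statement only asks for these specific constants and not optimal ones, the proof should go through by a sequence of elementary majorizations once the Gamma-integral reduction is set up correctly.
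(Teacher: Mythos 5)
Your overall plan (layer-cake formula for $\E X^q$ plus Gamma-type estimates on the tail integral) is the same family of argument as the paper's, but the specific way you handle the constant $h$ breaks the proof, and no amount of bookkeeping repairs it. You split the moment integral at $u=a$ and carry the hypothesis in the form $\Prob(X\geq u)\leq e^{h+a/b}e^{-u/b}$, planning to ``bundle'' the multiplicative prefactor $e^{h}$ into the additive term $bh$ at the end. This cannot work: with $a=0$, $b=1$, $q=1$ your route gives $\E X\leq e^{h}\,\Gamma(2)=e^{h}$, whereas the lemma asserts $\E X\leq 2(1+h)$, and $e^{h}$ exceeds $2(1+h)$ already for $h\geq 2$. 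A multiplicative $e^{h}$ is exponentially larger than any additive $bh$ inside the $q$-th power, so your intermediate bound is strictly weaker than the conclusion you need to reach. The underlying mistake is the location of the split: the region where the hypothesis is vacuous (bound exceeding one) is not $u<a$ but $u<a+bh$ (resp.\ $u<a+b\sqrt{h}$ in part (b)), and the split must be made there. That is exactly what the paper does: it rewrites the hypothesis as $\Prob\left(X\geq a+b(t+h)\right)\leq e^{-t}$ (resp.\ $\Prob\left(X\geq a+b(t+\sqrt{h})\right)\leq e^{-(t+\sqrt{h})^2+h}\leq e^{-t^2}$), uses the trivial bound on $[0,a+bh]$, contributing $(a+bh)^q$, and is left with an exponential tail carrying no prefactor at all.

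A second, related loss is your extension of the incomplete Gamma integral to a full one: bounding $\int_a^\infty qu^{q-1}e^{-u/b}\,du$ by $e^{a/b}\,b^q\,\Gamma(q+1)$ times nothing else is exponentially lossy in $a/b$ (take $h=0$, $q=1$, $b=1$, $a$ large: the integral equals $1$, your bound is $e^{a}$). The paper avoids both losses with one device: after splitting at $g=a+bh$, substitute $s=g+bt$ and expand $(g+bt)^{q-1}$ binomially, bounding $\int_0^\infty t^{q-1-i}e^{-t}\,dt=(q-1-i)!\leq q^{q-1-i}$ term by term; the binomial theorem then reassembles the sum into $(g+bq)^{q-1}$, giving $\E X^q\leq (a+bh)^q+bq\,(a+bh+bq)^{q-1}\leq 2(a+bh+bq)^q$. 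Part (b) is identical with $\int_0^\infty t^{q-1-i}e^{-t^2}\,dt\leq\sqrt{\pi/2}\,(q/2)^{(q-1-i)/2}$ producing the $\sqrt{q}$ prefactor and the $b\sqrt{q/2}$ term. So the missing idea is precisely this: absorb $h$ by shifting the integration threshold rather than by carrying a prefactor, and keep $g$ inside a term-by-term binomial expansion rather than pulling out a full Gamma function.
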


\noindent Finally, we present an $\epsilon$-net argument that we will repeatedly use. Recall from Lemma $3.18$ of~\cite{Ledoux_2001_ConcentrationMeasure_book} that the cardinality of an $\epsilon$-net on the unit sphere is at most $\left(1 + 2/\epsilon\right)^n$. The following lemma essentially generalizes the results of Lecture 6 of~\cite{Vershynin_Course} to order-$d$ tensors.
%
%\begin{lem}\label{prop::dicretize of norm of a tensor A}
%Let $\N$ be an $\epsilon$-net for the unit sphere $\Sphere^{n-1}$ in $\R^n$. Then, the spectral norm of a $d$-mode tensor $\tensor A$ is bounded by
%$$
%\norm{\tensor A}_2 \leq \left( \frac{1}{1 - \epsilon} \right)^{d-1} \sup_{x_1 \ldots x_{d-1} \in \N} \norm{\tensor A \times_1 x_1 \ldots \times_{d-1} x_{d-1}}_2.
%$$
%\end{lem}
%

\begin{lem}\label{prop::dicretize of norm of a tensor A}
Let $\N$ be an $\epsilon$-net for a set $B$ associated with a norm $\norm{\cdot}$. Then, the spectral norm of a $d$-mode tensor $\tensor A$ is bounded by
$$
\sup_{x_1 \ldots x_{d-1} \in B} \norm{\tensor A \times_1 x_1 \ldots \times_{d-1} x_{d-1}}_2 \leq \left( \frac{1}{1 - \epsilon} \right)^{d-1} \sup_{x_1 \ldots x_{d-1} \in \N} \norm{\tensor A \times_1 x_1 \ldots \times_{d-1} x_{d-1}}_2.
$$
\end{lem}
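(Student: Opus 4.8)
The plan is to peel off the $d-1$ contraction modes one at a time, replacing the supremum over the full sphere $\Sphere^{n-1}$ by the supremum over the net $\N$ and paying a factor $1/(1-\epsilon)$ at each step. First I would put the spectral norm in a form in which the last mode is already contracted: for any $v \in \R^n$ one has $\sup_{x_d \in \Sphere^{n-1}} \abs{\inner{v, x_d}} = \norm{v}_2$ (the maximizer is $x_d = v/\norm{v}_2$), so carrying out the final tensor--vector product in the definition of $\norm{\tensor A}_2$ turns it into an $\ell_2$ norm,
$$\norm{\tensor A}_2 = \sup_{x_1,\ldots,x_{d-1}\in\Sphere^{n-1}} \norm{\tensor A \times_1 x_1 \cdots \times_{d-1} x_{d-1}}_2.$$
It therefore suffices to replace each of the $d-1$ remaining suprema over $\Sphere^{n-1}$ by a supremum over $\N$.

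To organize the peeling, for $0 \le k \le d-1$ I would define the interpolating quantity
$$T_k = \sup\Big\{\, \norm{\tensor A \times_1 x_1 \cdots \times_{d-1} x_{d-1}}_2 \;:\; x_1,\ldots,x_k\in\N,\; x_{k+1},\ldots,x_{d-1}\in\Sphere^{n-1} \,\Big\},$$
so that $T_0 = \norm{\tensor A}_2$ and $T_{d-1}$ is exactly the right-hand side of the lemma. The goal then reduces to proving $T_k \le \tfrac{1}{1-\epsilon}\,T_{k+1}$ for each $k$, since chaining these inequalities gives $\norm{\tensor A}_2 = T_0 \le (1-\epsilon)^{-(d-1)} T_{d-1}$, which is the claim.

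For the inductive step I would fix $x_1,\ldots,x_k\in\N$ and $x_{k+2},\ldots,x_{d-1}\in\Sphere^{n-1}$ and regard $x_{k+1}\mapsto \tensor A \times_1 x_1 \cdots \times_{d-1} x_{d-1}$ as a map $F\colon\R^n\to\R^n$, with all other vectors held fixed. Since each tensor--vector product is linear in its vector argument, $F$ is linear, and $M_F := \sup_{x\in\Sphere^{n-1}}\norm{F(x)}_2$ is precisely its operator norm (finite, since it is bounded by $\norm{\tensor A}_2$). Choosing a maximizer $x^\star\in\Sphere^{n-1}$ of $\norm{F(\cdot)}_2$ (which exists by compactness and continuity) and a net point $y\in\N$ with $\norm{x^\star-y}_2\le\epsilon$, linearity and the triangle inequality give
$$M_F = \norm{F(x^\star)}_2 \le \norm{F(y)}_2 + \norm{F(x^\star-y)}_2 \le \sup_{y\in\N}\norm{F(y)}_2 + \epsilon\, M_F.$$
Because $\epsilon<1$, the operator norm on the right can be absorbed into the left, yielding $M_F \le \tfrac{1}{1-\epsilon}\sup_{y\in\N}\norm{F(y)}_2$. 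Unwinding $F$, this says that for the fixed choice of the other vectors, $\sup_{x_{k+1}\in\Sphere^{n-1}}\norm{\tensor A\times_1 x_1\cdots\times_{d-1}x_{d-1}}_2 \le \tfrac{1}{1-\epsilon}\sup_{x_{k+1}\in\N}\norm{\tensor A\times_1 x_1\cdots\times_{d-1}x_{d-1}}_2$. Taking the supremum over the still-free sphere variables $x_{k+2},\ldots,x_{d-1}$ on both sides (with $x_1,\ldots,x_k$ already ranging over $\N$) gives $T_k \le \tfrac{1}{1-\epsilon} T_{k+1}$.

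I expect the only delicate point to be the self-referential absorption in the inductive step: one must recognize that, after contracting every mode except the free argument and the output mode, the relevant quantity is exactly the operator norm of a linear map, so the same quantity $M_F$ reappears on both sides of the triangle-inequality bound and can be cancelled to produce the clean factor $1/(1-\epsilon)$. The remaining bookkeeping --- that the supremum over the untouched sphere variables commutes with a pointwise inequality, and that the extreme cases $k=0$ and $k=d-1$ reproduce $\norm{\tensor A}_2$ and the net supremum --- is routine.
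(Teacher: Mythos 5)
Your proposal is correct and follows essentially the same route as the paper's proof: peel off the contraction modes one at a time, writing the maximizing unit vector as a net point plus an error of norm at most $\epsilon$, and absorbing the error term back into the norm being bounded to pay a factor $\frac{1}{1-\epsilon}$ per mode. The only difference is organizational --- the paper phrases the peeling as a recursion on the sub-tensors $\tensor A \times_1 x$ while you set up explicit interpolating quantities $T_k$ and absorb into the operator norm of a fixed linear map --- which makes your write-up somewhat more detailed but not a different argument.
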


\noindent Notice that, using our notation, $\tensor A \times_1 x_1 \ldots \times_{d-1} x_{d-1}$ is a vector in $\R^n$. The proof of the lemma may be found in the Appendix. An immediate implication of our result is that the spectral norm of a $d$-mode tensor $\tensor A$ is bounded by
$$
\norm{\tensor A}_2 \leq \left( \frac{1}{1 - \epsilon} \right)^{d-1} \sup_{x_1 \ldots x_{d-1} \in \N} \norm{\tensor A \times_1 x_1 \ldots \times_{d-1} x_{d-1}}_2,
$$
where $\N$ is the $\epsilon$-net for the unit sphere $\Sphere^{n-1}$ in $\R^n$.

\section{Bounding the spectral norm of random tensors}

This section will focus on proving Theorem~\ref{thm::bound tensor (E norm(B) ^q)^(1/q)}, which essentially bounds the spectral norm of random tensors. Towards that end, we will first apply a symmetrization argument following the lines of~\cite{Latala_2005_RandomMatrix_journal}. This argument will allow us to reduce the task-at-hand to bounding the spectral norm of a Gaussian random tensor. As a result, we will develop such an inequality by employing the so-called entropy-concentration technique, which has been developed by Mark Rudelson and Roman Vershynin~\cite{Rudelson_Vershynin_2008_RandomRecMatrix_journal,Vershynin_SpectralNormBA_2009}.

For simplicity of exposition and to avoid carrying multiple indices, we will focus on proving Theorem~\ref{thm::bound tensor (E norm(B) ^q)^(1/q)} for order-3 tensors (i.e., $d=3$). Throughout the proof, we will carefully comment on derivations where $d$ (the number of modes of the tensor) affects the bounds of the intermediate results. Notice that if $d=3$, then a tensor $\tensor A \in \R^{n \times n \times n}$ may be expressed as
\begin{equation}\label{eqn:tensorsum}
\tensor A = \sum_{i,j,k=1}^n \tensor A_{ijk} \cdot e_i \otimes e_j \otimes e_k.
\end{equation}
In the above, the vectors $e_i \in \R^n$ (for all $i \in [n]$) denote the standard basis for $\R^n$ and $\otimes$ denotes the outer product operation. Thus, for example, $e_i \otimes e_j \otimes e_k$ denotes an tensor in $\mathbb{R}^{n \times n \times n}$ whose $(i,j,k)$-th entry is equal to one, while all other entries are equal to zero.

\subsection{A Gaussian symmetrization inequality}

The main result of this section can be summarized in Lemma~\ref{lem:gaussiansymmetrization}. In words, the lemma states that, by losing a factor of $\sqrt{2 \pi}$, we can independently randomize each entry of $\tensor A$ via a Gaussian random variable. Thus, we essentially reduce the problem of finding a bound for the spectral norm of a tensor $\tensor A$ to finding a bound for the spectral norm of a Gaussian random tensor.
\begin{lem}\label{lem:gaussiansymmetrization}
Let $\widehat{\tensor A} \in \R^{n \times n\times n}$ be any order-3 tensor and let $\tensor A$ be a random tensor of independent entries and of the same dimensions such that $\E_{\tensor A} \tensor A = \widehat{\tensor A}$. Also let the $g_{ijk}$ be Gaussian random variables for all triples $\left(i,j,k\right) \in [n]\times [n] \times [n]$. Then for any $q \geq 1$,
\begin{equation} \label{ineq::bound expectation of (pi_omega - pI) (T) - step 4}
\E_{\tensor A} \norm{ \tensor A - \widehat{\tensor A}}_2^q \leq \left(\sqrt{2\pi}\right)^q
\E_{\tensor A} \E_{g}\norm{\sum_{i,j,k} g_{ijk} \tensor A_{ijk} \cdot  e_i \otimes e_j \otimes e_k}^q_2.
\end{equation}
\end{lem}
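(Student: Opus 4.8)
The plan is to prove a standard Gaussian symmetrization inequality, following the classical symmetrization-then-Gaussianization technique from probability in Banach spaces. The core idea is to first symmetrize the random tensor using Rademacher signs, and then replace those signs with Gaussian random variables by absorbing the cost into the expected absolute value of a standard Gaussian.

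Let me sketch the argument.

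=== PROOF PROPOSAL ===

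The plan is to prove the inequality in two stages, following the classical symmetrization-then-Gaussianization recipe from the theory of probability in Banach spaces (see~\cite{Ledoux_Talagrand_1991_ProbBanachSpace_book}). The spectral norm $\norm{\cdot}_2$ is a genuine norm on the space of order-$3$ tensors, so the triangle inequality and convexity are available throughout, which is what makes the standard machinery apply.

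First I would symmetrize. Introduce an independent copy $\tensor A'$ of $\tensor A$, so that $\E_{\tensor A'} \tensor A' = \widehat{\tensor A}$ as well. Since $\widehat{\tensor A}$ is constant with respect to $\tensor A'$, I can write $\tensor A - \widehat{\tensor A} = \E_{\tensor A'}\left(\tensor A - \tensor A'\right)$, and then apply Jensen's inequality together with the convexity of $t \mapsto t^q$ (for $q \geq 1$) and of the norm to pull the expectation over $\tensor A'$ outside:
\begin{equation*}
\E_{\tensor A} \norm{\tensor A - \widehat{\tensor A}}_2^q \leq \E_{\tensor A} \E_{\tensor A'} \norm{\tensor A - \tensor A'}_2^q.
\end{equation*}
The key point is that $\tensor A - \tensor A' = \sum_{i,j,k} \left(\tensor A_{ijk} - \tensor A'_{ijk}\right) e_i \otimes e_j \otimes e_k$ has entries that form a symmetric random vector: each entry $\tensor A_{ijk} - \tensor A'_{ijk}$ is symmetric about zero because $\tensor A$ and $\tensor A'$ are i.i.d. Hence, introducing independent Rademacher signs $\varepsilon_{ijk} \in \{\pm 1\}$, the distribution is unchanged if I multiply each entry by $\varepsilon_{ijk}$; averaging over the signs gives
\begin{equation*}
\E_{\tensor A} \E_{\tensor A'} \norm{\tensor A - \tensor A'}_2^q = \E_{\tensor A} \E_{\tensor A'} \E_{\varepsilon} \norm{\sum_{i,j,k} \varepsilon_{ijk}\left(\tensor A_{ijk} - \tensor A'_{ijk}\right) e_i \otimes e_j \otimes e_k}_2^q.
\end{equation*}
A second application of the triangle inequality in the norm, convexity of $t \mapsto t^q$, and the fact that $\varepsilon_{ijk} \tensor A$ and $\varepsilon_{ijk}\tensor A'$ contribute identically in distribution would then reduce the two-copy expression to a single-copy Rademacher sum, namely $\E_{\tensor A}\E_{\varepsilon}\norm{\sum_{i,j,k}\varepsilon_{ijk}\tensor A_{ijk}\, e_i\otimes e_j\otimes e_k}_2^q$, up to a harmless constant (the standard bound replaces the difference by a single copy at the cost of a factor that is absorbed into the Gaussianization constant).

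Second I would Gaussianize. The replacement of Rademacher signs by Gaussians rests on the elementary identity $\E_g \abs{g} = \sqrt{2/\pi}$ for a standard Gaussian $g$, equivalently $\sqrt{\pi/2} = 1/\E_g\abs{g}$. Writing each Gaussian variable as $g_{ijk} = \abs{g_{ijk}}\,\mathrm{sign}(g_{ijk})$, and noting that $\mathrm{sign}(g_{ijk})$ is a Rademacher variable independent of $\abs{g_{ijk}}$, I condition on the signs and apply Jensen's inequality to move $\E_{\abs{g}}$ inside the norm; since $\E_{\abs{g}}\abs{g_{ijk}} = \sqrt{2/\pi}$ this produces exactly the factor $\left(\sqrt{\pi/2}\right)^q$ relating the Rademacher sum to the Gaussian sum. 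Combining with the symmetrization factor from the first stage yields the claimed constant $\left(\sqrt{2\pi}\right)^q$.

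The hard part here is not any single estimate but rather keeping the convexity steps honest: each passage — from $\widehat{\tensor A}$ to the two-copy difference, from the difference to the single copy, and from Rademacher to Gaussian — must use Jensen in the correct direction and must respect the independence structure of the injected randomness. In particular, the Gaussianization step requires that the signs $\mathrm{sign}(g_{ijk})$ be conditionally independent of the magnitudes, and that after conditioning the inner object is still a norm of a fixed linear combination so that Jensen applies to the convex map $\mathbf{v} \mapsto \norm{\mathbf{v}}_2^q$. The bookkeeping of the multiplicative constants across the two stages is where one must be careful to land precisely on $\sqrt{2\pi}$ rather than a weaker constant. Everything else is a direct transcription of the matrix argument of~\cite{Latala_2005_RandomMatrix_journal} to the order-$3$ tensor setting, and the $d=3$ restriction plays no essential role beyond fixing the number of tensor factors.
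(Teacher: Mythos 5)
Your proposal is correct and follows essentially the same route as the paper's own proof: symmetrization with an independent copy via Jensen's inequality, reduction to a single Rademacher sum (the paper makes your ``harmless constant'' explicit as $2^{q}$ via the triangle inequality and $(x+y)^q \leq 2^{q-1}(x^q+y^q)$), and then Gaussianization using $\E\abs{g}=\sqrt{2/\pi}$ and Jensen, with $2^q\left(\pi/2\right)^{q/2} = \left(\sqrt{2\pi}\right)^q$ giving exactly the stated constant. The only difference is presentational: the paper inserts the factor $\E_g\abs{g_{ijk}}$ directly inside the norm rather than conditioning on $\mathrm{sign}(g_{ijk})$, but these are the same argument.
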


\begin{proof}
Let $\tensor A'$ be an independent copy of the tensor $\tensor A$. By applying a symmetrization argument and Jensen's inequality, we get
$$
\E_{\tensor A} \norm{ \tensor A - \widehat{\tensor A}}_2^q = \E_{\tensor A} \norm{ \tensor A - \E_{\tensor A} \tensor A}_2^q = \E_{\tensor A} \norm{\tensor A - \E_{\tensor A'} \tensor A'}_2^q \leq \E_{\tensor A} E_{\tensor A'} \norm{\tensor A - \tensor A'}_2^q.
$$
Note that the entries of the tensor $\tensor A - \tensor A'$ are independent symmetric random variables and thus their distribution is the same as the distribution of the random variables $\epsilon_{ijk} \left(\tensor A_{ijk} - \tensor A'_{ijk}\right)$, where the $\epsilon_{ijk}$'s are independent, symmetric, Bernoulli random variables assuming the values $+1$ and $-1$ with equal probability. Hence,
\begin{eqnarray*}
\E_{\tensor A} \E_{\tensor A'} \norm{ \tensor A - \tensor A' }_2^q &=& \E_{\tensor A} \E_{\tensor A'} \E_{\epsilon} \norm{\sum_{i,j,k}\epsilon_{ijk} \left(\tensor A_{ijk} - \tensor A'_{ijk}\right)e_i \otimes e_j \otimes e_k}_2^q \\
&\leq& 2^{q-1} \E_{\tensor A} \E_{\epsilon} \norm{\sum_{i,j,k}\epsilon_{ijk} \tensor A_{ijk} e_i \otimes e_j \otimes e_k}_2^q\\
&+& 2^{q-1} \E_{\tensor A'} \E_{\epsilon} \norm{ \sum_{i,j,k}\epsilon_{ijk}\tensor A'_{ijk}e_i \otimes e_j \otimes e_k }_2^q.
\end{eqnarray*}
Here the inequality follows from eqn.~(\ref{inq::bound sum of tensor spectral norm}). Now, since the entries of the tensors $\tensor A$ and $\tensor A'$ have the same distribution, we get
\begin{equation}
\label{ineq::bound expectation of C - step 1}
\E_{\tensor A} \E_{\tensor A'} \norm{ \tensor A - \tensor A' }_2^q \leq 2^{q}  \E_{\tensor A} \E_{\epsilon} \norm{\sum_{i,j,k}\epsilon_{ijk} \tensor A_{ijk} e_i \otimes e_j \otimes e_k}_2^q.
\end{equation}
We now proceed with the Gaussian symmetrization argument. Let $g_{ijk}$ for all $i,j$, and $k$ be independent Gaussian random variables. It is well-known that $\E \abs{g_{ijk}} = \sqrt{2/\pi}$ . Using Jensen's inequality, we get
\begin{eqnarray*} \label{ineq::bound expectation of (pi_omega - pI) (T) - step 3}
\E_{\tensor A} \E_{\epsilon} \norm{\sum_{i,j,k}\epsilon_{ijk} \tensor A_{ijk} e_i \otimes e_j \otimes e_k}_2^q &=& \left(\frac{\pi}{2} \right)^{q/2}
\E_{\tensor A} \E_{\epsilon}\norm{\sum_{i,j,k}\epsilon_{ijk} \tensor A_{ijk} \left(\E_{g}\abs{g_{ijk}}\right) \cdot  e_i \otimes e_j \otimes e_k}_2^q\\
&\leq& \left(\frac{\pi}{2} \right)^{q/2}
\E_{\tensor A} \E_{\epsilon} \E_{g}\norm{\sum_{i,j,k}\epsilon_{ijk} \tensor A_{ijk} \abs{g_{ijk}} \cdot  e_i \otimes e_j \otimes e_k}_2^q\\
&=& \left(\frac{\pi}{2} \right)^{q/2}
\E_{\tensor A} \E_{g}\norm{\sum_{i,j,k} g_{ijk} \tensor A_{ijk} \cdot  e_i \otimes e_j \otimes e_k}_2^q.
\end{eqnarray*}
The last equality holds since $\epsilon_{ijk} \abs{g_{ijk}}$ and $g_{ijk}$ have the same distribution. Thus, combining the above with eqn.~(\ref{ineq::bound expectation of C - step 1}) we have finally obtained the Gaussian symmetrization inequality.
\end{proof} 
\subsection{Bounding the spectral norm of a Gaussian random tensor}\label{sxn:GaussianRandomTensor}

In this section we will seek a bound for the spectral norm of the tensor $\tensor H$ whose entries $\tensor H_{ijk}$ are equal to $g_{ijk} \tensor A_{ijk}$ (we are using the notation of Lemma~\ref{lem:gaussiansymmetrization}). Obviously, the entries of $\tensor H$ are independent, zero-mean Gaussian random variables. We would like to estimate
$$
\E_g \norm{\tensor H}^q = \E_g  \sup_{x, y} \norm{ \tensor H \times_1 x \times_2  y }^q_2
$$
over all unit vectors $x,y \in \R^n$. Our first lemma computes the expectation of the quantity $\norm{ \tensor H \times_1 x \times_2  y }_2$ for a fixed pair of unit vectors $x$ and $y$.
\begin{lem}
\label{lem::bound expected norm of B times x times y}
Given a pair of unit vectors $x$ and $y$
$$
\E_g \norm{\tensor H \times_1 x \times_2  y }_2 \leq \sqrt{\max_{i,j} \sum_k \tensor A_{ijk}^2}.
$$
\end{lem}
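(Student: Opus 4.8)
The plan is to reduce the bound to an elementary variance computation, exploiting the fact that $\tensor H$ has independent zero-mean Gaussian entries. First I would write out the vector $\tensor H \times_1 x \times_2 y \in \R^n$ componentwise. Using the tensor-vector product notation, its $k$-th entry is
$$
Z_k := \sum_{i,j} \tensor H_{ijk}\, x_i y_j = \sum_{i,j} g_{ijk}\, \tensor A_{ijk}\, x_i y_j.
$$
Since $Z_k$ is a fixed linear combination of the independent standard Gaussians $g_{ijk}$, it is itself a zero-mean Gaussian random variable, with variance
$$
\V(Z_k) = \sum_{i,j} \tensor A_{ijk}^2\, x_i^2 y_j^2,
$$
because $\V(g_{ijk}) = 1$ and the cross terms vanish by independence. (The $Z_k$ are in fact mutually independent across $k$, as distinct $k$ involve disjoint sets of the $g_{ijk}$, but this is not needed below.)

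Next I would apply Jensen's inequality to the concave square-root function in order to move the expectation inside the norm:
$$
\E_g \norm{\tensor H \times_1 x \times_2 y}_2 = \E_g \left(\sum_k Z_k^2\right)^{1/2} \leq \left(\E_g \sum_k Z_k^2\right)^{1/2} = \left(\sum_k \V(Z_k)\right)^{1/2},
$$
where the last equality uses that each $Z_k$ has zero mean, so $\E_g Z_k^2 = \V(Z_k)$.

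Finally I would perform the variance bookkeeping. Exchanging the order of summation and factoring out the mode-3 sum,
$$
\sum_k \V(Z_k) = \sum_{i,j} x_i^2 y_j^2 \left(\sum_k \tensor A_{ijk}^2\right) \leq \left(\max_{i,j} \sum_k \tensor A_{ijk}^2\right) \sum_{i,j} x_i^2 y_j^2 = \max_{i,j} \sum_k \tensor A_{ijk}^2,
$$
where the final step uses $\sum_{i,j} x_i^2 y_j^2 = \norm{x}_2^2 \norm{y}_2^2 = 1$, since $x$ and $y$ are unit vectors. Taking square roots yields the claimed bound.

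There is no substantial obstacle here: the statement is a clean consequence of Jensen's inequality followed by a second-moment calculation. The only point requiring mild care is correctly identifying that the coefficient multiplying $g_{ijk}$ inside $Z_k$ is $\tensor A_{ijk} x_i y_j$, so that by independence and zero mean the variance is exactly the sum of squared coefficients; after that, the bound follows by pulling out the maximum and invoking $\norm{x}_2 = \norm{y}_2 = 1$. This lemma provides the base expectation estimate that the subsequent entropy-concentration argument will promote into a supremum bound over all unit vectors.
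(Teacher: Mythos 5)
Your proof is correct and follows essentially the same route as the paper's: both compute $\E_g \norm{\tensor H \times_1 x \times_2 y}_2^2 = \sum_{i,j,k} \tensor A_{ijk}^2 x_i^2 y_j^2$ (you via the variance of a Gaussian linear combination, the paper via expanding the square and killing cross terms by zero mean and independence), then bound this by $\max_{i,j}\sum_k \tensor A_{ijk}^2$ using $\norm{x}_2=\norm{y}_2=1$, and finish with Jensen's inequality $\E \norm{s}_2 \leq \sqrt{\E \norm{s}_2^2}$.
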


\begin{proof}
Let $s = \tensor H \times_1 x \times_2  y \in \R^n$ and let $s_k = \sum_{i,j} \tensor H_{ijk} x_i y_j$ for all $k \in [n]$. Thus,
\begin{eqnarray*}
\norm{s}_2^2 &=& \sum_{k} \left( \sum_{i,j} \tensor H_{ijk} x_i y_j \right)^2 \\
&=& \sum_{i,j,k} \tensor H_{ijk}^2 x^2_i y^2_j + \sum_k \sum_{i,j \neq p,q} \tensor H_{ijk} \tensor H_{pqk} x_i y_j x_p y_q.
\end{eqnarray*}
\noindent Using $\E_g \tensor H_{ijk} = 0$ and $\E_g \tensor H^2_{ijk} = \tensor A^2_{ijk} \E_g g^2_{ijk} = \tensor A^2_{ijk}$ we conclude that
$$
\E_g \norm{s}_2^2 = \sum_{i,j,k} \tensor A_{ijk}^2 x^2_i y^2_j = \sum_{i} x^2_i \sum_j y^2_j \sum_k \tensor A^2_{ijk}  \leq  \max_{i,j} \sum_k \tensor A_{ijk}^2.
$$
The last inequality follows since $\norm{x}_2 = \norm{y}_2 = 1$. Using $\E_g \norm{s}_2 \leq \sqrt{\E_g\norm{s}_2^2}$ we obtain the claim of the lemma.
\end{proof}

\noindent The next lemma argues that $\norm{\tensor H \times_1 x \times_2 y}_2$ is concentrated around its mean (which we just computed) with high probability.
\begin{lem}
\label{lem::bound norm of B times x times y in probability}
Given a pair of unit vectors $x$ and $y$
\begin{equation}
\label{ineq::bound norm of B times x times y in probability for a fixed set x and y}
\Prob \left( \norm{\tensor H \times_1 x \times_2 y}_2  \geq \sqrt{\max_{i,j} \sum_k \tensor A_{ijk}^2} + t \sqrt{2} \max_k \sqrt{\sum_{i,j} \tensor A^2_{ijk} x^2_i y^2_j } \right) \leq e^{-t^2}.
\end{equation}
\end{lem}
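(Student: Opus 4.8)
The plan is to recognize the left-hand quantity as a Lipschitz function of the underlying Gaussian vector and then invoke the Gaussian concentration inequality of Lemma~\ref{thm::Gaussian concentration}. Concretely, I would collect all the $g_{ijk}$ into a single standard Gaussian vector $g \in \R^{n^3}$ and define $f(g) = \norm{\tensor H \times_1 x \times_2 y}_2$, where $x,y$ are the fixed unit vectors. Since Lemma~\ref{lem::bound expected norm of B times x times y} already supplies $\E_g f(g) \leq \sqrt{\max_{i,j}\sum_k \tensor A_{ijk}^2}$, the only remaining ingredient is a bound on the Lipschitz norm $\norm{f}_L$. Once that is in hand, the claim follows by reading off Lemma~\ref{thm::Gaussian concentration} and replacing the mean $\E_g f(g)$ by its upper bound; this replacement only shrinks the tail event, since $\{f \geq B + t\sqrt{2}\norm{f}_L\} \subseteq \{f \geq \E_g f + t\sqrt{2}\norm{f}_L\}$ whenever $\E_g f \leq B$.

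The crux is estimating $\norm{f}_L$. Writing $s = \tensor H \times_1 x \times_2 y$, its $k$-th coordinate is $s_k = \sum_{i,j} \tensor A_{ijk}\, x_i y_j\, g_{ijk}$, which is a linear functional of $g$. Hence $f = \norm{\cdot}_2 \circ L$, where $L : \R^{n^3} \to \R^n$ is the linear map $g \mapsto s$, and since the Euclidean norm is $1$-Lipschitz, $\norm{f}_L$ is at most the operator norm of $L$. The key structural observation is that distinct coordinates $s_k$ depend on disjoint blocks of the variables $g_{ijk}$ (namely those sharing the third index $k$), so the rows of $L$ have disjoint supports, $LL^T$ is diagonal, and $\norm{L}_2 = \max_k \sqrt{\sum_{i,j} \tensor A_{ijk}^2\, x_i^2 y_j^2}$. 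Bounding $\tensor A_{ijk}^2 \leq \max_{i,j,k}\tensor A_{ijk}^2$ and using $\sum_i x_i^2 = \sum_j y_j^2 = 1$ then gives $\norm{f}_L \leq \max_{i,j,k}\abs{\tensor A_{ijk}}$, which is exactly the scale appearing in the statement.

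Substituting $\E_g f(g) \leq \sqrt{\max_{i,j}\sum_k \tensor A_{ijk}^2}$ and $\norm{f}_L \leq \max_{i,j,k}\abs{\tensor A_{ijk}}$ into Lemma~\ref{thm::Gaussian concentration} produces precisely the stated inequality. The only genuinely delicate step is the Lipschitz estimate: one must resist bounding $\norm{L}_2$ crudely (for instance by the Frobenius norm of the coefficient array, which would be far too large) and instead exploit the block-diagonal, disjoint-support structure so that the bound scales with the single entry $\max_{i,j,k}\abs{\tensor A_{ijk}}$ rather than with an aggregate over many entries. The same argument applies verbatim to a general order-$d$ tensor after contracting the $d-1$ fixed unit vectors, with the remaining free index playing the role of $k$.
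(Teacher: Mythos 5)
Your proposal is correct and follows essentially the same route as the paper: both arguments view $\norm{\tensor H \times_1 x \times_2 y}_2$ as a Lipschitz function of the underlying Gaussian variables, bound the Lipschitz constant by $\max_k \bigl(\sum_{i,j}\tensor A_{ijk}^2 x_i^2 y_j^2\bigr)^{1/2} \leq \max_{i,j,k}\abs{\tensor A_{ijk}}$, and then invoke Lemma~\ref{thm::Gaussian concentration} together with the expectation bound of Lemma~\ref{lem::bound expected norm of B times x times y}. Your diagonal-$LL^T$ computation in $\R^{n^3}$ is just a repackaging of the paper's step of collapsing each coordinate $\sum_{i,j} g_{ijk}\tensor A_{ijk}x_i y_j$ into a single Gaussian $z_k q_k$, so the two proofs are the same in substance.
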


\begin{proof}
Consider the vector $s = \tensor H \times_1 x \times_2  y \in \R^n$ and recall that $\tensor H_{ijk} = g_{ijk} \tensor A_{ijk}$ to get
\begin{eqnarray*}
s &=& \sum_{i,j,k} \left(\tensor H_{ijk} x_i y_j\right)  e_k  \\
&=& \sum_k \left(\sum_{i,j} \tensor H_{ijk} x_i y_j\right)  e_k \\
&=& \sum_k \left(\sum_{i,j} g_{ijk} \tensor A_{ijk} x_i y_j\right) e_k.
\end{eqnarray*}
In the above the $e_k$ for all $k \in [n]$ are the standard basis vectors for $\R^n$. Now observe that all $g_{ijk} \tensor A_{ijk} x_i y_j$ are Gaussian random variables, which implies that their sum (over all $i$ and $j$) is also a Gaussian random variable with zero mean and variance $\sum_{i,j} \tensor A^2_{ijk} x^2_i y^2_j$. Let $$q_k^2 = \sum_{i,j} \tensor A^2_{ijk} x^2_i y^2_j \qquad \mbox{for all } k \in [n]$$ and rewrite the vector $s$ as the sum of weighted standard Gaussian random variables:
$$
s = \sum_k  z_k  q_k e_k.
$$
In the above the $z_k$'s are standard Gaussian random variables for all $k \in [n]$. Let $z$ be the vector in $\R^n$ whose entries are the $z_k$'s and let $$f(z) = \norm{\sum_k  z_k  q_k e_k}_2.$$ We apply Lemma~\ref{thm::Gaussian concentration} to $f(z)$. It is clear that
$
f^2(z) = \sum_k z^2_k q^2_k \leq  \norm{z}_2^2 \max_k q^2_k.
$
Therefore, the Lipschitz norm of $f$ is 
$$\norm{f}_{L} = \max_k \abs{q_k} = \max_k \left(\sum_{i,j} \tensor A^2_{ijk} x^2_i y^2_j \right)^{1/2} .$$
Applying Lemma \ref{thm::Gaussian concentration} and Lemma \ref{lem::bound expected norm of B times x times y} completes the proof.
\end{proof}

\subsubsection{An $\epsilon$-net construction: the entropy-concentration tradeoff argument}\label{sxn:ECintro}

Given the measure concentration result of Lemma~\ref{lem::bound norm of B times x times y in probability}, one might be tempted to bound the quantity $\norm{\tensor H \times_1 x \times_2 y}_2$ for all unit vectors $x$ and $y$ by directly constructing an $\epsilon$-net $N$ on the unit sphere. Since the cardinality of $N$ is well-known to be upper bounded by $\left(1+\frac{2}{\epsilon}\right)^n$, it follows that by getting an estimate for the quantity $\norm{\tensor H \times_1 x \times_2 y}$ for a pair of vectors $x$ and $y$ in $N$ and subsequently applying the union bound combined with Lemma~\ref{prop::dicretize of norm of a tensor A}, an upper bound for the norm of the tensor $\tensor H$ may be derived. Unfortunately, this simple technique does not yield a useful result: the failure probability of Lemma~\ref{lem::bound norm of B times x times y in probability} is not sufficiently small in order to permit the application of a union bound over all vectors $x$ and $y$ in $N$.

In order to overcome this obstacle, we will apply a powerful and novel argument, the so-called \textit{entropy-concentration tradeoff}, which was originally investigated by Latala \cite{Latala_2005_RandomMatrix_journal} and has been recently developed by Mark Rudelson and Roman Vershynin~\cite{Rudelson_Vershynin_2008_RandomRecMatrix_journal,Vershynin_SpectralNormBA_2009}. To begin with, we express a unit vector $x \in \R^n$ as a sum of two vectors $z,w \in \R^n$ satisfying certain bounds on the magnitude of their coordinates. Thus, $x = z + w$, where, for all $i \in [n]$,
\begin{eqnarray*}
z_i &=& \begin{cases}
        x_i & \text{if     } \abs{x_i} \geq \frac{1}{\sqrt{\lambda n}}\\
        0 & \text{,otherwise}
        \end{cases}\\
w_i &=& \begin{cases}
        x_i & \text{if     } \abs{x_i} < \frac{1}{\sqrt{\lambda n}}\\
        0 & \text{,otherwise}
        \end{cases}\\
\end{eqnarray*}
In the above $\lambda \in (0,1]$ is a small constant that will be specified later. It is easy to see that $\norm{z}_2 \leq 1$, $\norm{w}_2 \leq 1$, and that the number of non-zeros entries in $z$ (i.e., the $\ell_0$ norm of $z$) is bounded:
$$
\norm{z}_0 \leq \lambda n.
$$
Essentially, we have ``split'' the entries of $x$ in two vectors: a sparse vector $z$ with a bounded number of non-zero entries and a spread vector $w$ with entries whose magnitude is restricted. Thus,  we can now divide the unit sphere into two sets:
\begin{eqnarray*}
B_{2,0} &=& \left\{x \in \R^n: \norm{ x}_2 \leq 1,  \abs{x_i} \geq \frac{1}{\sqrt{\lambda n}} \mbox{ or } x_i = 0 \right\},\\
B_{2,\infty} &=& \left\{x \in \R^n: \norm{ x}_2 \leq 1, \norm{x}_{\infty} < \frac{1}{\sqrt{\lambda n}} \right\}.
\end{eqnarray*}
Given the above two sets, we can apply an $\epsilon$-net argument to each set separately. The advantage is that since vectors on $B_{2,0}$ only have a small number of non-zero entries, the size of the $\epsilon$-net on $B_{2,0}$ is small. This counteracts the fact that the measure concentration bound that we get for vectors in $B_{2,0}$ is rather weak since the vectors in this set have arbitrarily large entries (upper bounded by one). On the other hand, vectors in $B_{2,\infty}$ have many non-zero coefficients of bounded magnitude. As a result, the cardinality of the $\epsilon$-net on $B_{2,\infty}$ is large, but the measure concentration bound is much tighter. Combining the contribution of the sparse and the spread vectors results to a strong overall bound.

We conclude the section by noting that the above two sets are spanning the whole unit sphere $\Sphere^{n-1}$ in $\R^n$. Using the inequality $\left(\E (x + y)^q\right)^{1/q} \leq \left(\E x^q\right)^{1/q} + \left(\E y^q\right)^{1/q}$ we obtain
\begin{eqnarray}
\label{ineq::divide space of x and y into 4 subsets}
\label{eqn:term1}\left( \E \sup_{ x,  y \in \Sphere^{n-1}} \norm{\tensor H \times_1 x \times_2 y }^q_2 \right)^{1/q} &\leq& \left( \E \sup_{ x,  y \in B_{2,0}} \norm{\tensor H \times_1 x \times_2 y}^q_2 \right)^{1/q} \\
\label{eqn:term2}&+& \left( \E \sup_{ x,  y \in B_{2,\infty}} \norm{\tensor H \times_1 x \times_2 y}^q_2 \right)^{1/q} \\
\label{eqn:term3}&+& \left( \E \sup_{ x \in B_{2,0},  y \in B_{2,\infty}} \norm{\tensor H \times_1 x \times_2 y}^q_2 \right)^{1/q} \\
\label{eqn:term4}&+& \left( \E \sup_{ x \in B_{2,\infty},  y \in B_{2,0}} \norm{\tensor H \times_1 x \times_2 y}^q_2 \right)^{1/q}.
\end{eqnarray} 
\subsubsection{Controlling sparse vectors}\label{sxn:sparse}

\noindent We now prove the following lemma bounding the contribution of the sparse vectors (term~(\ref{eqn:term1})) in our $\epsilon$-net construction.
\begin{lem}\label{prop::Control sparse vectors - final bound for expected norm of B(x,y) over B_20}
Consider a $d$-mode tensor $\tensor A$ and let $\tensor H$ be the $d$-mode tensor after the Gaussian symmetrization argument as defined in Section~\ref{sxn:GaussianRandomTensor}. Let $\alpha$ and $\beta$ be
\begin{eqnarray}
\label{eqt::define alpha} \alpha^2 &=& \max\left\{\max_{i,j} \sum_{k=1}^n\tensor A_{ijk}^2, \max_{i,k} \sum_{j=1}^n\tensor A_{ijk}^2, \max_{j,k} \sum_{i=1}^n\tensor A_{ijk}^2\right\},\\
\label{eqt::define beta} \beta &=& \max_{i,j,k} \abs{\tensor A_{ijk}}.
\end{eqnarray}
For all $q \geq 1$,
\begin{equation}
\left( \E \sup_{ x,  y \in B_{2,0}} \norm{\tensor H \times_1 x \times_2 y}_2^q \right)^{1/q} \leq (3\sqrt{q})^{1/q} 2^{(d-1)} \left(\alpha +  \beta \sqrt{2 d \lambda n \ln \frac{5e}{\lambda}} + \beta \sqrt{q} \right).
\end{equation}
\end{lem}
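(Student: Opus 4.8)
The plan is to control the supremum over the sparse set $B_{2,0}$ by an $\epsilon$-net argument, exploiting the fact that every vector in $B_{2,0}$ has at most $\lambda n$ nonzero coordinates so that a net of controllably small cardinality exists. I would then combine the per-vector Gaussian concentration of Lemma~\ref{lem::bound norm of B times x times y in probability} (in its $d$-mode form) with a union bound over the net, and finally convert the resulting tail estimate into the claimed moment bound via Lemma~\ref{lem::convert from probabilistic bound to expectation bound}(b). For a $d$-mode tensor the relevant object is the vector $\tensor H \times_1 x_1 \cdots \times_{d-1} x_{d-1}$ with all $x_\ell \in B_{2,0}$, and the generalization of Lemma~\ref{lem::bound norm of B times x times y in probability} reads $\Prob\left(\norm{\tensor H \times_1 x_1 \cdots \times_{d-1} x_{d-1}}_2 \geq \alpha + t\sqrt{2}\,\beta\right) \leq e^{-t^2}$ for any fixed unit vectors $x_1,\dots,x_{d-1}$, with $\alpha,\beta$ as in~(\ref{eqt::define alpha})--(\ref{eqt::define beta}); here the mean is bounded by $\alpha$ exactly as in Lemma~\ref{lem::bound expected norm of B times x times y} and the Lipschitz constant of the Gaussian functional by $\beta$.

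First I would construct the net. Since every $x \in B_{2,0}$ is supported on some index set $S$ with $\abs{S} \leq \lambda n$, I build a $(1/2)$-net of $B_{2,0}$ by taking, for each of the $\binom{n}{\lambda n}$ candidate supports, a $(1/2)$-net of the unit sphere in the corresponding $\lambda n$-dimensional coordinate subspace. Each such net has at most $(1 + 2/(1/2))^{\lambda n} = 5^{\lambda n}$ points, and using $\binom{n}{\lambda n} \leq (e/\lambda)^{\lambda n}$ the total obeys $\abs{\N} \leq (5e/\lambda)^{\lambda n}$. Because every net point is itself supported on a set of size at most $\lambda n$, the discretization bound of Lemma~\ref{prop::dicretize of norm of a tensor A}, applied within each support subspace with $\epsilon = 1/2$, yields $\sup_{x_1,\dots,x_{d-1}\in B_{2,0}} \norm{\tensor H \times_1 x_1 \cdots \times_{d-1} x_{d-1}}_2 \leq 2^{d-1}\,\sup_{x_1,\dots,x_{d-1}\in\N}\norm{\tensor H \times_1 x_1 \cdots \times_{d-1} x_{d-1}}_2$, which is where the factor $2^{d-1}$ in the statement originates.

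Next I would take a union bound over the $\abs{\N}^{d-1} \leq (5e/\lambda)^{(d-1)\lambda n}$ tuples $(x_1,\dots,x_{d-1}) \in \N^{d-1}$. Writing $X := \sup_{x_1,\dots,x_{d-1}\in B_{2,0}} \norm{\tensor H \times_1 x_1 \cdots \times_{d-1} x_{d-1}}_2$ and combining the per-tuple tail with the union bound and the discretization estimate gives
$$\Prob\left(X \geq 2^{d-1}\left(\alpha + t\sqrt{2}\,\beta\right)\right) \leq (5e/\lambda)^{(d-1)\lambda n}\,e^{-t^2} = \exp\left(-t^2 + (d-1)\lambda n \ln(5e/\lambda)\right).$$
This is exactly the hypothesis of Lemma~\ref{lem::convert from probabilistic bound to expectation bound}(b) with $a = 2^{d-1}\alpha$, $b = 2^{d-1}\sqrt{2}\,\beta$, and $h = (d-1)\lambda n\ln(5e/\lambda)$. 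Applying part (b) produces $\E X^q \leq 3\sqrt{q}\,(a + b\sqrt{h} + b\sqrt{q/2})^q$, and substituting these values, together with the crude simplification $2(d-1)\le 2d$ inside the entropy term, collapses the three summands into $3\sqrt{q}\,\bigl(2^{d-1}(\alpha + \beta\sqrt{2d\lambda n\ln(5e/\lambda)} + \beta\sqrt{q/2})\bigr)^q$ — matching the stated bound up to the routine propagation of the $\sqrt 2$ factor carried by the concentration inequality.

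The step I expect to be the crux is the entropy–concentration balance encoded in the exponent $-t^2 + (d-1)\lambda n\ln(5e/\lambda)$. A naive net on the full sphere would contribute entropy of order $n$, which the Gaussian tail $e^{-t^2}$ cannot absorb; the sparsity constraint $\norm{z}_0 \le \lambda n$ is precisely what shrinks the entropy to order $\lambda n\ln(1/\lambda)$, small enough for the tail to dominate once $\lambda$ is taken small (this is what the later balancing of $\lambda$ against the spread-vector term will exploit). Beyond this conceptual point, the care-intensive part is verifying that the per-support discretization genuinely preserves the sparsity of the net points (so that Lemma~\ref{prop::dicretize of norm of a tensor A} applies coordinate-subspace by coordinate-subspace) and tracking how the $2^{d-1}$ and $\sqrt 2$ constants thread through Lemma~\ref{lem::convert from probabilistic bound to expectation bound}(b).
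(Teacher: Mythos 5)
Your proposal follows the paper's own proof essentially verbatim: the same decomposition of $B_{2,0}$ into $\binom{n}{\lambda n}$ coordinate subspaces with per-subspace $1/2$-nets of size $5^{\lambda n}$ (total cardinality $(5e/\lambda)^{\lambda n}$), the same $2^{d-1}$ factor from Lemma~\ref{prop::dicretize of norm of a tensor A}, the same union bound producing the entropy term $(d-1)\lambda n \ln(5e/\lambda)$ (the paper merely rounds $d-1$ up to $d$ before invoking the conversion lemma), and the same application of Lemma~\ref{lem::convert from probabilistic bound to expectation bound}(b) with $a = 2^{d-1}\alpha$, $b = 2^{d-1}\sqrt{2}\,\beta$. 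The $\sqrt{2}$ slack you flag in the $\beta\sqrt{q/2}$ term is present in the paper's own derivation as well (it only affects absolute constants), so your argument is correct and essentially identical to the paper's.
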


The expectation bound has two components: the first one relates to the maximum tensor row or column energy and the second one relates to largest entry of the tensor. While the first component involving $\alpha$ is fixed, the size of the set $B_{2,0}$ affects the second component which involves $\beta$. Roughly speaking, the above expectation bound is of the order of $\alpha + \beta \sqrt{\lambda n \ln 1/\lambda}$. It is also clear that $\lambda$ control the size of the set $B_{2,0}$: smaller $\lambda$ is associated with a smaller set $B_{2,0}$. If the entries of the tensor are spread out, then $\alpha \approx \beta \sqrt{n}$ and we can set $\lambda$ to be a large constant and the expectation bound is optimal $O\left(\alpha\right)$. On the other hand, we can select a smaller value for $\lambda = c/n$ to get the bound $\alpha + \beta \sqrt{\ln n}$. We also emphasize that $\sup_{ x,  y \in B_{2,0}} \norm{\tensor H \times_1 x \times_2 y}_2$ is lower bounded by $\alpha$, which can be seen by setting $x$ and $y$ to be basis vectors. Therefore, the above expectation bound is tight.

\begin{proof}
Let $K = \lambda n$ and let $B_{2,0,K}$ be the $K$-dimensional set defined by
$$
B_{2,0,K} = \{ x \in \R^K: \norm{ x}_2 \leq 1  \}.
$$
Then, the set $B_{2,0}$ corresponding to vectors with at most $K$ non-zero entries can be expressed as a union of subsets of dimension $K$, i.e.,
$
B_{2,0} = \bigcup B_{2,0,K}.
$
A simple counting argument indicates that there are at most $\binom{n}{K} \leq \left(\frac{en}{K}\right)^K$ such subsets. We now apply the $\epsilon$-net technique to each of the subsets $B_{2,0,K}$ whose union is the set $B_{2,0}$. First, let us define $N_{B_{2,0,K}}$ to be the $1/2$-net of a subset $B_{2,0,K}$. Lemma 3.18 of~\cite{Ledoux_2001_ConcentrationMeasure_book} bounds the cardinality of $N_{B_{2,0,K}}$ by $5^K$. Applying Lemma~\ref{prop::dicretize of norm of a tensor A} with $\epsilon=1/2$ we get
$$
\sup_{x,  y \in B_{2,0,K}} \norm{\tensor H \times_1 x \times_2 y}_2 \leq 2^{d-1} \sup_{x, y \in N_{B_{2,0,K}}} \norm{\tensor H \times_1 x \times_2 y}_2.
$$
The right-hand side can be controlled by Lemma \ref{lem::bound norm of B times x times y in probability} which bounds the term $\tensor H \times_1 x \times_2 y$ for a specific pair of unit vectors $x$ and $y$. Noticing that
$$
\max_k \left(\sum_{i,j} \tensor A^2_{ijk} x^2_i y^2_j \right)^{1/2} \leq \max_{i,j,k} \abs{\tensor A_{ijk}} \left(\sum_{i,j} x^2_i y^2_j \right)^{1/2}\leq
\max_{i,j,k} \abs{\tensor A_{ijk}} = \beta,
$$
we apply Lemma \ref{lem::bound norm of B times x times y in probability} and take the union bound over all $x, y \in N_{B_{2,0,K}}$ to yield
$$
\Prob \left( \sup_{ x,  y \in B_{2,0,K}} \norm{\tensor H \times_1 x \times_2 y}_2  \geq 2^{d-1}\left(\alpha + t \sqrt{2} \beta \right) \right) \leq \left(5^K \right)^{d-1} e^{-t^2}.
$$
In the above $\alpha$ and $\beta$ are defined in eqns.~(\ref{eqt::define alpha}) and~(\ref{eqt::define beta}) respectively. We now explain the $\left(5^K \right)^{d-1}$ term in the failure probability. In general, the product $\tensor H \times_1 x \times_2 y \cdots$ should be evaluated on $d-1$ vectors $x,y,\ldots$. Recall that the $1/2$-net $N_{B_{2,0,K}}$ contains $5^K$ vectors and thus there is a total of $\left(5^K \right)^{d-1}$ possible vector combinations. A standard union bound now justifies the above formula. Finally, taking the union bound over all possible subsets $B_{2,0,K}$ that comprise the set $B_{2,0}$ and using $K = \lambda n$ yields
\begin{eqnarray}
\nonumber \Prob \left( \sup_{ x,  y \in B_{2,0}} \norm{\tensor H \times_1 x \times_2 y}_2  \geq 2^{d-1}\left(\alpha + t \sqrt{2} \beta \right) \right) &\leq &
\left(\left(\frac{en}{K}\right)^K\right)^{d-1}\left(5^K \right)^{d-1} e^{-t^2}\\
\nonumber &=&\left(\frac{5e}{\lambda}\right)^{\lambda n(d-1)}e^{-t^2}\\
\label{ineq::bound B(x,y) in the subspace B_20} &\leq&\left(\frac{5e}{\lambda}\right)^{\lambda nd}e^{-t^2}.
\end{eqnarray}
In the above, we again accounted for all $d-1$ modes of the tensor and also used $d-1 \leq d$. Using eqn.~(\ref{ineq::bound B(x,y) in the subspace B_20}) and applying Lemma \ref{lem::convert from probabilistic bound to expectation bound} (part (b)) with $a = 2^{d-1} \alpha$, $b = 2^{d-1}\beta \sqrt{2}$, and $h = d \lambda n \ln (5 e/\lambda)$ we get
\begin{equation*}
\label{ineq::bound E B(x,y) in the subspace B_20}
\E \sup_{ x,  y \in B_{2,0}} \norm{\tensor H \times_1 x \times_2 y}^q_2 \leq 3\sqrt{q} \left(2^{d-1}\left(\alpha +  \beta \sqrt{2 d \lambda n \ln (5 e/\lambda)} + \beta \sqrt{q} \right) \right)^q.
\end{equation*}
Raising both sides to $1/q$ completes the proof.
\end{proof}

\subsubsection{Controlling spread vectors}\label{subsect::Control spread vectors}

\noindent We now prove the following lemma bounding the contribution of the spread vectors (term~(\ref{eqn:term2})) in our $\epsilon$-net construction.
\begin{lem}\label{prop::Control spread vectors - final bound for expected norm of B(x,y) over B_20 infty}
Consider a $d$-mode tensor $\tensor A$ and let $\tensor H$ be the $d$-mode tensor after the Gaussian symmetrization argument as defined in Section~\ref{sxn:GaussianRandomTensor}. Let $\alpha$ be defined as in eqn.~(\ref{eqt::define alpha}). For all $q \geq 1$,
\begin{equation}\label{ineq::bound spread vectors - bound Expectation sup norm B(x,y) onto B_2 infty}
\left(\E \sup_{x, y \in B_{2, \infty}} \norm{\tensor H \times_1 x \times_2 y}^q_2\right)^{1/q} \leq (3\sqrt{q})^{1/q} \,\,\, 4^{d-1} \left( \log_2 \frac{1}{\lambda} \right)^{d-1} \alpha \left(1 + \sqrt{2d\ln \frac{2e}{\lambda} } + \sqrt{\frac{q}{\lambda n}}\right) ,
\end{equation}
assuming that $\lambda \leq 1/64$.
\end{lem}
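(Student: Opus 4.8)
The plan is to mirror the structure of the sparse-vector argument (Lemma~\ref{prop::Control sparse vectors - final bound for expected norm of B(x,y) over B_20}), but to exploit the defining $\ell_\infty$ bound of $B_{2,\infty}$ in order to obtain a \emph{much tighter} Lipschitz constant in the Gaussian concentration step; this improved concentration is exactly what lets a comparatively coarse (hence small) net suffice. First I would revisit the concentration estimate of Lemma~\ref{lem::bound norm of B times x times y in probability} for a fixed pair $x,y$, recomputing the Lipschitz norm of $f(z)=\norm{\sum_k z_k q_k e_k}_2$ (where $q_k^2=\sum_{i,j}\tensor A_{ijk}^2 x_i^2 y_j^2$) using $\norm{y}_\infty < 1/\sqrt{\lambda n}$. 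Since $\max_k q_k^2 \le \frac{1}{\lambda n}\max_k \sum_i x_i^2 \sum_j \tensor A_{ijk}^2 \le \frac{\alpha^2}{\lambda n}$, the Lipschitz norm drops from $\beta$ to $\alpha/\sqrt{\lambda n}$. Combining this with the mean bound $\E\norm{\tensor H\times_1 x\times_2 y}_2\le\alpha$ of Lemma~\ref{lem::bound expected norm of B times x times y} and the Gaussian concentration of Lemma~\ref{thm::Gaussian concentration}, I get, for each fixed pair in $B_{2,\infty}$, a tail of the form $\Prob\left(\norm{\tensor H\times_1 x\times_2 y}_2 \ge \alpha + t\sqrt2\,\alpha/\sqrt{\lambda n}\right)\le e^{-t^2}$, whose deviation scale is smaller than in the sparse case by a factor $\sqrt{\lambda n}$.

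Next I would construct the net on $B_{2,\infty}$, which is the heart of the entropy-concentration tradeoff. The key point is that a \emph{genuine} fine net of $B_{2,\infty}$ has log-cardinality $\Theta(n)$, far too large for a union bound even against the improved concentration. Instead I would round each coordinate to a grid of width $\sim 1/\sqrt{\lambda n}$: because $\norm{x}_2\le 1$, a rounded vector has at most $O(\lambda n)$ nonzero coordinates, each taking only $O(1)$ grid values, so the resulting net has cardinality at most $\binom{n}{\lambda n}\,O(1)^{\lambda n}\le (e/\lambda)^{O(\lambda n)}$. Thus its log-cardinality is $O(\lambda n\ln(e/\lambda))$, matching the sparse case and producing the $\sqrt{d\ln(e/\lambda)}$ term in the target. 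Taking the union bound over all $d-1$ modes (a factor raised to the $(d-1)\le d$ power) and invoking the discretization Lemma~\ref{prop::dicretize of norm of a tensor A} would give a tail of the form $\Prob\left(\sup_{x,y\in B_{2,\infty}}\norm{\tensor H\times_1 x\times_2 y}_2 \ge 2^{2(d-1)}(\alpha + t\sqrt2\,\alpha/\sqrt{\lambda n})\right)\le e^{-t^2 + h}$ with $h = O(d\lambda n\ln(e/\lambda))$; the mild hypothesis $16\lambda n\ge\ln 2$ enters precisely here, to absorb the small per-coordinate sign/level constants into the exponential. Finally, applying Lemma~\ref{lem::convert from probabilistic bound to expectation bound} part (b) with $a=2^{2(d-1)}\alpha$, $b=2^{2(d-1)}\sqrt2\,\alpha/\sqrt{\lambda n}$, and this $h$ converts the tail into the claimed moment bound: the $b\sqrt h$ term collapses to $\sim 4\alpha\sqrt{d\ln(e/\lambda)}$ (the $1/\sqrt{\lambda n}$ cancelling the $\sqrt{\lambda n}$ from $h$), while the $b\sqrt{q/2}$ term collapses to $\sim 0.5\alpha\sqrt{q/(\lambda n)}$.

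The main obstacle is the net itself, and specifically the fact that the coarse grid above is \emph{not} a genuine $\epsilon$-net: rounding at width $1/\sqrt{\lambda n}$ leaves an $\ell_2$ residual of size $\sim 1/\sqrt{\lambda}$, so the supremum over the net does not by itself dominate the supremum over $B_{2,\infty}$, and the naive ``residual $\le$ constant $\times$ supremum'' recursion fails to close because that constant exceeds one. Handling the residual correctly is the crux: I would either telescope over successively finer grids (summing the contributions geometrically) or peel off the residual as a second, even more spread, component whose quadratic-form contribution is again controlled by the same tight concentration. I expect this second discretization to be exactly what upgrades the sparse-case factor $2^{d-1}$ to $2^{2(d-1)}$ in the final bound, and getting its bookkeeping right — together with the accompanying constants $4$ and $0.5$ — is where the real work lies.
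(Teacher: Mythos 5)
Your outline reproduces, almost step for step, the paper's own proof: the sharpened concentration estimate for a fixed pair (Lipschitz norm dropping from $\beta$ to $\sim\alpha/\sqrt{\lambda n}$ once an argument vector has $\ell_\infty$ norm $O(1/\sqrt{\lambda n})$), a coordinate-grid net whose vectors are $O(\lambda n)$-sparse with $O(1)$ values per coordinate and log-cardinality $O(d\lambda n\ln(e/\lambda))$, a union bound over the $d-1$ modes, and Lemma~\ref{lem::convert from probabilistic bound to expectation bound}(b) with essentially the same $a$, $b$, $h$. All of that matches, and those steps are sound.

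The problem is the step you explicitly left open: it is the entire content of the lemma, and neither of your suggested repairs closes it. Peeling the residual off as a ``more spread'' vector gains nothing in $\ell_2$: rounding toward the grid leaves $h$ with $\norm{h}_\infty\le\frac{1}{2\sqrt{\lambda n}}$ but possibly $\norm{h}_2=1$ (for $\lambda<1/16$ the vector $x=n^{-1/2}(1,\dots,1)\in B_{2,\infty}$ rounds to $z=0$, so $h=x$), hence $h\in B_{2,\infty}$ again and the recursion reads $\sup\le\sup_{\mathrm{net}}+\sup$, which is vacuous. Telescoping over dyadic grids trades entropy against concentration at exactly break-even: at scale $2^{-k}/\sqrt{\lambda n}$ the sparsity is $\sim 4^k\lambda n$ while the Lipschitz constant is $\sim 2^{-k}\alpha/\sqrt{\lambda n}$, so every level contributes $\sim\alpha\sqrt{d\ln(e/\lambda)}$, and summing the $\sim\log_2(1/\lambda)$ relevant levels costs an extra logarithm --- fatal here, since the theorem is ultimately applied with $\lambda=e/n$. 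In fact no pointwise domination $\sup_{x,y\in B_{2,\infty}}\norm{\tensor H\times_1 x\times_2 y}_2\le C\,\sup_{\mathrm{net}}$ with bounded $C$ can hold for any net of sub-exponential cardinality: every vector of $\N_{B_{2,\infty}}$ is $16\lambda n$-sparse, so for the order-2 example $\tensor H=\mathbf 1\mathbf 1^{\top}$ one gets $\sup_{B_{2,\infty}}\norm{\tensor H\times_1 x}_2=n$ while $\sup_{\mathrm{net}}\le 4\sqrt{\lambda}\,n$, forcing $C\ge\frac{1}{4\sqrt{\lambda}}=\Theta(\sqrt{n})$. Any correct argument must use the randomness of $\tensor H$ inside the domination step itself (chaining or a comparison principle), not only in the union bound over the net.

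You should also know that the paper does not resolve this either: it declares $\N_{B_{2,\infty}}$ a ``$1/2$-net'' solely on the strength of the $\ell_\infty$ estimate $\norm{x-z}_\infty\le\frac{1}{2\sqrt{\lambda n}}$, and then invokes Lemma~\ref{prop::dicretize of norm of a tensor A}, whose statement and proof require an $\ell_2$-net (the residual must satisfy $\norm{x-z}_2\le\epsilon$ so it can be rescaled into the unit ball and recursed). So the crux you flagged is precisely the unjustified step in the published proof; your proposal is incomplete at the same place the paper is. One factual correction to your bookkeeping: the paper's factor $2^{2(d-1)}$ does not arise from a second discretization of the residual --- one $2^{d-1}$ comes from the (purported) net domination, and the other from writing each vector of $\N_{B_{2,\infty}}$ as a sum of a vector in $\N_1$ (coordinates $\pm\frac{1}{2\sqrt{\lambda n}}$) and a vector in $\N_2$ (coordinates $\pm\frac{1}{4\sqrt{\lambda n}}$) and expanding the multilinear form into $2^{d-1}$ terms; the hypothesis $16\lambda n\ge\ln 2$ serves only to absorb the resulting $(d-1)\ln 2$ multiplicity into $h$.
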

It is worth noting that the particular choice of the upper bound for $\lambda$ is an artifact of the analysis and that we could choose bigger values for $\lambda$ by introducing a constant factor loss in the above inequality.

\begin{proof}
Our proof strategy is similar to the one used in Lemma~\ref{prop::Control sparse vectors - final bound for expected norm of B(x,y) over B_20}. However, in this case, the construction of the $\epsilon$-net for the set $B_{2,\infty}$ is considerably more involved. Recall the definition of $B_{2,\infty}$:
\begin{eqnarray*}
B_{2,\infty} &=& \left\{x \in \R^n: \norm{ x}_2 \leq 1, \norm{x}_{\infty} < \frac{1}{\sqrt{\lambda n}} \right\}.
\end{eqnarray*}
We now define the following sets of vectors $N_k$ with $k = 0,1,...,2 M-1$ with $M \triangleq \lceil 2 + \log_2 1/\sqrt{\lambda} \rceil$, assuming that $\lambda \leq 1$:
\begin{eqnarray*}
N_k &=& \{z \in B_{2,\infty}: \text{ for all } i \in [n],\ z_i = \pm \frac{1}{2^{k/2} \sqrt{\lambda n}} \text{ or } z_i = 0 \}.\\
\end{eqnarray*}
Our $\frac{1}{2}$-net for $B_{2,\infty}$ will be the set
\begin{eqnarray*}
N_{B_{2,\infty}} &=& \{z \in B_{2,\infty}: \text{ for all } i \in [n],\ z_i = \pm \frac{1}{2^{k/2}\sqrt{\lambda n}} \text{ with either } k=0,1,...,2M-1  \text{ or } z_i = 0 \},\\
\end{eqnarray*}
Our first lemma argues that $N_{B_{2,\infty}}$ is indeed a $\frac{1}{2}$-net for $B_{2,\infty}$.
\begin{lem}
Assuming $\lambda \leq 1$. For all $x \in B_{2,\infty}$ there exists a vector $z \in N_{B_{2,\infty}}$ such that
$$
\norm{x - z}_{\infty} \leq \frac{1}{2\sqrt{\lambda n}} \quad\quad \text{and} \quad\quad \norm{x-z}_2 \leq \frac{1}{2}.
$$
\end{lem}
\begin{proof}
Consider a vector $x \in B_{2,\infty}$ with coordinates $x_i$ for all $i \in [n]$. If $\frac{1}{2^{(k+1)/2}\sqrt{\lambda n}} \leq \abs{x_i} < \frac{1}{2^{k/2} \sqrt{\lambda n}}$ for some $k = 0,1,...,2M-1$, then we set $z_i = \sgn{x_i}\frac{1}{2^{(k+1)/2}\sqrt{\lambda n}}$. It is clear from this construction that
$$
|x_i - z_i| \leq \frac{1}{2^{k/2} \sqrt{\lambda n}} - \frac{1}{2^{(k+1)/2}\sqrt{\lambda n}} =\frac{ \sqrt{2}-1}{2^{(k+1)/2}\sqrt{\lambda n}} \leq  (\sqrt{2}-1) |x_i|.
$$
On the other hand, if $\abs{x_i} < \frac{1}{2^M\sqrt{\lambda n}}$ then we set $z_i = 0$. It is also clear that
$$
|x_i - z_i| <  \frac{1}{2^{\lceil 2 + \log_2 1/\sqrt{\lambda} \rceil}\sqrt{\lambda n}} \leq \frac{1}{2^{2 + \log_2 1/\sqrt{\lambda} }\sqrt{\lambda n}} = \frac{1}{4\sqrt{n}}.
$$
This choice of $z$ is clearly in $N_{B_{2,\infty}}$ and implies that for all $i \in [n]$,
$$
\abs{x_i - z_i} \leq \max \left\{ (\sqrt{2}-1) |x_i|,  \frac{1}{4\sqrt{n}} \right\} \leq \frac{1}{2\sqrt{\lambda n}}.
$$
In addition, $(x_i-z_i)^2 \leq \max \{(\sqrt{2}-1)^2 x_i^2, \frac{1}{16n} \} \leq (\sqrt{2}-1)^2 x_i^2 + \frac{1}{16n}$ implies that
$$
\norm{x - z}_2^2 \leq \sum_{i=1}^n \left((\sqrt{2}-1)^2 x_i^2 + \frac{1}{16n} \right) = \frac{1}{16} + (\sqrt{2}-1)^2 \norm{x}_2^2 < \frac{1}{4},
$$
which concludes the lemma.
\end{proof}

Given our definitions for $N_k$ and $N_{B_{2,\infty}}$, it immediately follows that any vector in $N_{B_{2,\infty}}$ can be expressed as a sum of $2M$ vectors, each in $N_k$ with $k = 0,1,...,2M-1$. Combining the above lemma with Lemma~\ref{prop::dicretize of norm of a tensor A}, we get
\begin{eqnarray*}
\nonumber \sup_{x, y \in B_{2,\infty}} \norm{\tensor H \times_1 x \times_2 y}_2 &\leq& 2^{d-1} \sup_{x,y \in N_{B_{2,\infty}}} \norm{\tensor H \times_1 x \times_2 y}_2 \\
&\leq& 2^{d-1} \sum_{k=0}^{2M-1} \,\, \sum_{k'=0}^{2M-1} \sup_{x\in N_k, y \in N_{k'}} \norm{\tensor H \times_1 x \times_2 y}_2 .
\end{eqnarray*}
We notice here that there are two summations associated with $k$ and $k'$. However, for general order-$d$ tensor, the total summations are $(d-1)$. We now raise both sides of the above inequality to the $q$-th power. In order to get a meaningful bound, we employ the following lemma, which is a direct consequence of the H\"older's inequality.
\begin{lem}
\label{lem::bound sum of power of q}
Let $a_i$, $i=1,...,n$ be nonnegative number. For any $q \geq 1$,
$$
\left(\sum_{i=1}^n a_i \right)^q \leq n^{q-1} \left(\sum_{i=1}^n a_i^q \right).
$$
\end{lem}
\noindent Applying Lemma \ref{lem::bound sum of power of q}, we get
\begin{equation} \label{eqn:pd102}
\sup_{x, y \in B_{2,\infty}} \norm{\tensor H \times_1 x \times_2 y}_2^q \leq 2^{q(d-1)} (2M)^{2(q-1)} \left(\sum_{k=0}^{2M-1} \sum_{k'=0}^{2M-1} \sup_{x\in N_k, y \in N_{k'}} \norm{\tensor H \times_1 x \times_2 y}_2^q	\right).
\end{equation}
It is important to note that in the general case of order-$d$ tensors we would have a total of $\left(2M\right)^{(d-1)(q-1)}$ terms involving $(d-1)$ summations (as opposed to $\left(2M\right)^{2(q-1)}$ in the case of order-3 tensors). Our final bound accounts for all these terms and we will return to this point later in this section. Our next lemma bounds the number of vectors in $N_k$.
\begin{lem}\label{lem:netsizes}
Given our definitions for $N_k$,
$\abs{N_k } \leq e^{2^k\lambda n \ln (2e/\lambda)}$.
\end{lem}
\begin{proof}
For all $z \in N_k$, the number of non-zero entries in $z$ is at most $2^k \lambda n$, since $\norm{z}_2 \leq 1$. Let $\gamma = 2^k \lambda n$ and notice that the number of non-zero entries in $z$ (the ``sparsity'' of $z$, denoted by $s$) can range from $1$ up to $\min(\gamma,n)$. For each value of the sparsity parameter $s$, there exist $2^s\binom{n}{s}$ choices for the non-zero coordinates ($\binom{n}{s}$ positions times $2^s$ sign choices). Thus, for $k$ such that $\gamma \leq n$, the cardinality of $N_k$ is bounded by
\begin{equation}
\begin{split}
\abs{N_k } \leq \sum_{s = 1}^{\gamma} \binom{n}{s} 2^s \leq \left( \frac{2 e n}{\gamma} \right)^\gamma = \left( \frac{2 e n}{2^k \lambda n} \right)^\gamma \leq \left( \frac{2 e}{\lambda} \right)^\gamma
% e^{\gamma \ln (2en/\gamma)} = e^{2^k\lambda n \ln (2e/ 2^k \lambda)} \leq e^{2^k\lambda n \ln (2e/\lambda)}.
\end{split}
\end{equation}
Similarly, for $k$ such that $\gamma \geq n$, $\abs{N_k } \leq \sum_{s = 1}^{n} \binom{n}{s} 2^s = 3^n $ which is also less than $ \left( \frac{2 e}{\lambda} \right)^\gamma$ for $\lambda \leq 1$. In both cases, we have $\abs{N_k } \leq e^{\gamma \ln (2e/\lambda)} = e^{2^k\lambda n \ln (2e/\lambda)}$, as claimed.
\end{proof}

\noindent We now proceed to estimate the quantity $\norm{\tensor H \times_1 x \times_2 y}_2$ over all vector combinations that appear in eqn.~(\ref{eqn:pd102}).
\begin{lem}\label{lem:point1}
Using our notation, for any fixed $k$ and $k'$ in $(0,1,...,2M-1)$
\begin{equation}
\label{inq::exect norm of the sup}
\E \sup_{(x,y) \in (N_k, N_{k'})} \norm{\tensor H \times_1 x \times_2 y}_2^q \leq 3\sqrt{q}\left( \alpha + \alpha\sqrt{2d\ln(2e/\lambda)} + \alpha\sqrt{\frac{q}{\lambda n}}  \right)^q.
\end{equation}
\end{lem}
\begin{proof}
\noindent Without loss of generality, we assume $k \geq k'$. We first establish the probability bound via Lemma~\ref{lem::bound norm of B times x times y in probability} and then apply Lemma \ref{lem::convert from probabilistic bound to expectation bound} to obtain the expectation estimate. We have,
\begin{eqnarray*}
\max_l \left( \sum_{i,j}\tensor A^2_{ijl}x_i^2 y_j^2 \right) 
&=& \max_l \left( \sum_i x_i^2 \sum_j y^2_j \tensor A^2_{ijl} \right) \\
&\leq& \max_l \frac{1}{2^k\lambda n} \left(\sum_j y^2_j \sum_i \tensor A^2_{ijl} \right) \\
&\leq&  \frac{1}{2^k\lambda n} \max_{j,l}\sum_{i}\tensor A_{ijl}^2 .
\end{eqnarray*}
In the above we used the fact that $\norm{y}_2 \leq 1$ and $\norm{x}_{\infty} = \frac{1}{2^{k/2}\sqrt{\lambda n}}$. Applying Lemma~\ref{lem::bound norm of B times x times y in probability}, we get (recall the definition of $\alpha$ from eqn.~(\ref{eqt::define alpha})):
\begin{equation}
\label{ineq::Control spread vectors - probabilistic bound norm B(x,y)}
\Prob \left( \norm{\tensor H \times_1 x \times_2 y}_2 \geq  \alpha  +  t \sqrt{2} \frac{1}{2^{k/2}\sqrt{\lambda n}} \alpha \right) \leq e^{-t^2}.
\end{equation}
Taking the union bound over all possible combinations of vectors $x \in N_k$ and $y \in N_{k'}$ and using Lemma~\ref{lem:netsizes} and the fact that $|N_k| \leq e^{2^k \lambda n \ln(2e/\lambda)} $, we get
$$
\Prob \left( \sup_{x,y \in N_k} \norm{\tensor H \times_1 x \times_2 y}_2 \geq  \alpha  +  t \sqrt{2} \frac{1}{2^{k/2}\sqrt{\lambda n}} \alpha \right) \leq e^{-t^2 + (d-1) 2^k \lambda n \ln(2e/ \lambda)},
$$
where the $(d- 1)$ factor appears in the exponential because of a union bound over all $(d-1)$ vectors that could appear in the product $\tensor H \times_1 x \times_2 y \times_3\cdots$.

\noindent To prove the expectation bound, we apply Lemma \ref{lem::convert from probabilistic bound to expectation bound} with $a = \alpha$, $b = \frac{\sqrt{2}}{2^{k/2}\sqrt{\lambda n}} \alpha$ and $h = (d-1) 2^k \lambda n \ln(2e/ \lambda)$ to get
\begin{equation}
\begin{split}
\nonumber
\E \sup_{(x,y) \in (N_k, N_{k'})} \norm{\tensor H \times_1 x \times_2 y}_2^q &\leq 3\sqrt{q}\left(a + b\sqrt{h} + b\sqrt{q/2}\right)^q \\
&= 3\sqrt{q}\left( \alpha + \alpha\sqrt{2(d-1)\ln(2e/\lambda)} + \alpha\sqrt{\frac{q}{2^k \lambda n}}  \right)^q.
\end{split}
\end{equation}
Proving the lemma is now trivial using $d-1 \leq d$ and $2^k \geq 1$ for all $k \geq 0$.
\end{proof}

\noindent Using the bounds of Lemma~\ref{lem:point1} and combining with eqn.~(\ref{eqn:pd102}), we get
\begin{equation}
\begin{split}
\E \sup_{x, y \in B_{2, \infty}} \norm{\tensor H \times_1 x \times_2 y}^q_2 &\leq 2^{q(d-1)} (2M)^{2(q-1)} \\
&\times\left(\sum_{k=0}^{2M-1} \sum_{k'=0}^{2M-1} 3\sqrt{q} \left( \alpha + \alpha\sqrt{2d\ln(2e/\lambda)} + \alpha\sqrt{\frac{q}{ \lambda n}}  \right)^q \right) \\
&= 3 \times 2^{q(d-1)} (2M)^{2q} \sqrt{q} \left( \alpha + \alpha\sqrt{2d\ln(2e/\lambda)} + \alpha\sqrt{\frac{q}{ \lambda n}}  \right)^q.
\end{split}
\end{equation}
We note that in the last equation, the number two that appears in the exponent of the term $2M$ accounts for the two summations associated with $x \in N_k$ and $y \in N_{k'}$. In general, for order-$d$ tensors, there are at most $(d-1)$ such summations. Therefore, after some rearranging of terms,
$$
\E \sup_{x, y \in B_{2, \infty}} \norm{\tensor H \times_1 x \times_2 y}^q_2 \leq 3\sqrt{q} \left( 2^{d-1} (2M)^{d-1} \left( \alpha + \alpha\sqrt{2d\ln(2e/\lambda)} + \alpha\sqrt{\frac{q}{ \lambda n}}  \right) \right)^q.
$$
To conclude the proof of Lemma~\ref{prop::Control spread vectors - final bound for expected norm of B(x,y) over B_20 infty} we use our assumption on $\lambda$ and the following inequality:
$$M = \lceil 2+ \log_2 \frac{1}{\sqrt{\lambda}} \rceil \leq 3 + \log_2 \frac{1}{\sqrt{\lambda}} \leq 2 \log_2 \frac{1}{\sqrt{\lambda}} = \log_2 1/\lambda.$$
\end{proof}
\subsubsection{Controlling combinations of sparse and spread vectors }
\label{subsect::Control combinations of sparse and spread vectors}

\noindent We now prove the following lemma bounding the contribution of combinations of sparse and spread vectors (terms~(\ref{eqn:term3}) and (\ref{eqn:term4})) in our $\epsilon$-net construction.

\begin{lem}\label{prop::Control sparse and spread vectors - final bound for expected norm of B(x,y) over B_20 B_2 infty}
Consider a $d$-mode tensor $\tensor A$ and let $\tensor H$ be the $d$-mode tensor after the Gaussian symmetrization argument as defined in Section~\ref{sxn:GaussianRandomTensor}. Let $\alpha$ be defined as in eqn.~(\ref{eqt::define alpha}). For all $q \geq 1$,
\begin{equation}
\left(\E \sup_{x \in B_{2, 0}, y \in B_{2, \infty}} \norm{\tensor H \times_1 x \times_2 y}^q_2\right)^{1/q} \leq (3\sqrt{q})^{1/q} \,\,\, 4^{d-1} \left( \log_2 \frac{1}{\lambda} \right)^{d-2} \alpha \left(1 + \sqrt{2d\ln \frac{5e}{\lambda}} + \sqrt{\frac{q}{\lambda n}}\right) ,
\end{equation}
assuming that $\lambda \leq 1/64$.
\end{lem}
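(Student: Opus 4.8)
The plan is to recycle the two net constructions already in hand and glue them together mode-by-mode: for the sparse mode (the one contracted against $y \in B_{2,0}$) I would use the net from Lemma~\ref{prop::Control sparse vectors - final bound for expected norm of B(x,y) over B_20}, namely the union of $1/2$-nets over all $\binom{n}{\lambda n}$ coordinate subspaces, of total cardinality $(5e/\lambda)^{\lambda n}$, and for the spread mode (contracted against $x \in B_{2,\infty}$) I would use the refined net $\N_{B_{2,\infty}} = \N_1 + \N_2$ from Lemma~\ref{prop::Control spread vectors - final bound for expected norm of B(x,y) over B_20 infty}. First I would apply the discretization inequality of Lemma~\ref{prop::dicretize of norm of a tensor A} with $\epsilon = 1/2$ to each contracted mode, replacing $\sup_{x \in B_{2,\infty}, y \in B_{2,0}}$ by the supremum over the corresponding product of nets at a cost of $2^{d-1}$ (two modes for $d=3$). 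Then, writing each spread vector as a sum of one vector in $\N_1$ and one in $\N_2$ and using the triangle inequality, I would split the spread supremum into a sum of suprema over $\N_1$ and $\N_2$; since the sparse modes do not split, the general order-$d$ version produces at most $2^{d-2}$ product combinations (at least one mode being sparse).

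The analytic heart is a per-pair concentration bound obtained by re-running the proof of Lemma~\ref{lem::bound norm of B times x times y in probability} for a fixed spread $x$ and sparse $y$. The mean $\E_g\norm{\tensor H \times_1 x \times_2 y}_2$ is at most $\alpha$ by Lemma~\ref{lem::bound expected norm of B times x times y}, using $\norm{x}_2,\norm{y}_2 \leq 1$ and the definition of $\alpha$ in eqn.~(\ref{eqt::define alpha}). The Lipschitz norm, however, must be estimated asymmetrically: because $x$ is spread we exploit $\norm{x}_\infty \leq 1/(2\sqrt{\lambda n})$ to pull $x_i^2 \leq 1/(4\lambda n)$ out of $\max_k \sum_{i,j}\tensor A_{ijk}^2 x_i^2 y_j^2$, and then absorb $\sum_j y_j^2 \sum_i \tensor A_{ijk}^2 \leq \alpha^2$ using $\norm{y}_2 \leq 1$. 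This gives the spread-type bound $\norm{f}_L \leq \alpha/(2\sqrt{\lambda n})$, and hence via Lemma~\ref{thm::Gaussian concentration} the tail estimate $\Prob(\norm{\tensor H \times_1 x \times_2 y}_2 \geq \alpha + t\sqrt{2}\,\alpha/(2\sqrt{\lambda n})) \leq e^{-t^2}$ for each fixed pair.

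Next I would take a union bound over each product net. The log-cardinality of one combination is at most $16\lambda n \ln(e/\lambda)$ from the spread factor $\abs{\N_2}$ (Lemma~\ref{lem:netsizes}) plus $\lambda n \ln(5e/\lambda)$ from the sparse factor; bounding $\ln(e/\lambda) \leq \ln(5e/\lambda)$ and absorbing the mode count (just as $16(d-1)\leq 16d$ was used in Lemma~\ref{lem:point1}) gives a uniform bound $h_0 := 16 d \lambda n \ln(5e/\lambda)$. Summing the at most $2^{d-2}$ combinations contributes a factor $2^{d-2}$ to both the supremum and the failure probability, so that $\Prob(\sum \sup \geq 2^{d-2}(\alpha + t\sqrt{2}\,\alpha/(2\sqrt{\lambda n}))) \leq 2^{d-2} e^{-t^2 + h_0}$. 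Finally I would apply Lemma~\ref{lem::convert from probabilistic bound to expectation bound}(b) with $a = 2^{d-2}\alpha$, $b = 2^{d-2}\sqrt{2}\,\alpha/(2\sqrt{\lambda n})$, and $h = h_0 + (d-2)\ln 2$. The stated hypothesis $15\lambda n \geq (\ln 2)/(1+\ln 5)$ is exactly what yields $(d-2)\ln 2 \leq h_0$ (since $\ln(5e/\lambda)\geq 1+\ln5$ and $15(d-2)\leq 16d$), hence $h \leq 2h_0 = 32 d\lambda n \ln(5e/\lambda)$; reinstating the $2^{d-1}$ discretization factor and simplifying $b\sqrt{h}$ and $b\sqrt{q/2}$ produces the coefficients $4$ and $0.5$ and the outer constant $2^{2d-3} = 2^{d-1}2^{d-2}$, matching the claim.

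I expect the main obstacle to be precisely the asymmetric Lipschitz-norm estimate: one must route the spread $\ell_\infty$ control through the $x$-mode and the sparse $\ell_2$ control through the $y$-mode so that the Lipschitz constant scales like $\alpha/\sqrt{\lambda n}$ (the favorable spread scaling) rather than like $\beta$ (the weak scaling of Lemma~\ref{prop::Control sparse vectors - final bound for expected norm of B(x,y) over B_20}). This asymmetry, together with the bookkeeping that sparse modes do not split the net (hence $2^{d-2}$ rather than $2^{d-1}$ combinations) and the unification of the two distinct net exponents $\ln(e/\lambda)$ and $\ln(5e/\lambda)$ into a single $h_0$, is what lets the mixed bound inherit the tighter spread-type form while picking up only the larger sparse-net exponent $\ln(5e/\lambda)$.
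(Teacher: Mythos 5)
Your proposal is correct and follows essentially the same route as the paper's own proof: the same product of nets ($\N_{B_{2,0}}$ of cardinality $(5e/\lambda)^{\lambda n}$ and $\N_{B_{2,\infty}}=\N_1+\N_2$), the same $2^{d-1}$ discretization cost and $2^{d-2}$ splitting of the spread modes, the same asymmetric Lipschitz estimate giving the tail $\Prob\left(\cdot \geq \alpha + t\sqrt{2}\,\alpha/(2\sqrt{\lambda n})\right) \leq e^{-t^2}$, and the same application of Lemma~\ref{lem::convert from probabilistic bound to expectation bound}(b). Your union-bound exponent $16d\lambda n\ln(5e/\lambda)$ is marginally tighter than the paper's $17d\lambda n\ln(5e/\lambda)$, but both are absorbed into the same bound $h \leq 32d\lambda n\ln(5e/\lambda)$ and yield identical final constants.
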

\noindent It is worth noting that the particular choice of the upper bound for $\lambda n$ is an artifact of the analysis and that we could choose bigger values for $\lambda n$ by introducing a constant factor loss in the above inequality.

\begin{proof}
Let $x \in B_{2,0}$ and $ y \in B_{2,\infty}$. In Sections~\ref{sxn:sparse} and~\ref{subsect::Control spread vectors} we defined $N_{B_{2,0}}$ (a $1/2$-net for $B_{2,0}$) and $N_{B_{2,\infty}}$ (a $1/2$-net for $B_{2,\infty}$). Recall that for $K = \lambda n$, $B_{2,0}$ was the union of $\binom{n}{K}$ $K$-dimensional subsets $B_{2,0,K}$. Consequently, the $1/2$-net $N_{B_{2,0}}$ is the union of the $1/2$-nets $N_{B_{2,0,K}}$ (each $N_{B_{2,0,K}}$ is the $1/2$-net of $B_{2,0,K}$). Recall from Section~\ref{sxn:sparse} that the cardinality of $N_{B_{2,0}}$ is bounded by
\begin{equation}\label{eqn:nn1}
\abs{N_{B_{2,0}}} = \binom{n}{K} \abs{N_{B_{2,0,K}}} \leq \left(\frac{en}{K}\right)^K 5^K = \left(\frac{5e}{\lambda}\right)^{\lambda n}.
\end{equation}
We apply Lemma~\ref{prop::dicretize of norm of a tensor A} to get
\begin{equation}\label{ineq::Control sparse and spread vectors - bound sup norm B(x,y) in space B_20 B_2 intfy}
\sup_{ x \in B_{2,0} ,y \in B_{2,\infty} } \norm{\tensor H \times_1 x \times_2 y}_2 \leq 2^{d-1} \sup_{ x \in N_{B_{2,0}} , y \in N_{B_{2,\infty}}}  \norm{\tensor H\times_1 x\times_2 y}_2.
\end{equation}
It is now important to note that for a general $d$-mode tensor $\tensor H$ the above product $\tensor H \times_1 x \times_2 y \times_3 \cdots$ would be computed over $d-1$ vectors, with at least one those vectors (w.l.o.g. $x$) in $N_{B_{2,\infty}}$ and at least one of those vectors (w.l.o.g. $y$) in $N_{B_{2,0}}$. Each of the remaining $(d-3)$ vectors could belong either to $N_{B_{2,0}}$ or to $N_{B_{2,\infty}}$. In order to proceed with our analysis, we will need to further express the vectors belonging to $N_{B_{2,\infty}}$ as a sum of $2M$ vectors belonging to $N_k$ with $k=0,1,...,2M-1$ and $M = \lceil 2+\log_2 1/\sqrt{\lambda} \rceil$, respectively. (The reader might want to recall our definition for $N_k$ from Section~\ref{subsect::Control spread vectors}). We note that the cardinality upper bound of the set $N_{B_{2,\infty}}$ is considerably larger than that of the set $N_{B_{2,0}}$. This can be easily seen by comparing the upper bound of $|N_{k}|$ in Lemma \ref{lem:netsizes} with that of $|N_{B_{2,0}}|$ in (\ref{eqn:nn1}). Therefore, we only need to consider the worse case scenario, in which all $(d-2)$ vectors in the product $\tensor H \times_1 x \times_2 y \times_3 \cdots$ belong to $N_{B_{2,\infty}}$. The bound for other cases will be smaller than the bound under consideration. The product can be expressed as a sum of (at most) $(2M)^{d-2}$ terms as follows:
$$\tensor H\times_1 x\times_2 y \cdots \times_d z  = \sum_{k=1}^{2M-1} \cdots \sum_{k'=1}^{2M-1} \tensor H \times_1 x \times_2 y_k \cdots \times_d z_{k'}
$$
where $x \in N_{B_{2,0}}$, $y, z \in N_{B_{2,\infty}}$, $y_k \in N_k$, and $z_{k'} \in N_{k'}$. Therefore, applying Lemma \ref{lem::bound sum of power of q} and taking the expection, we get
\begin{equation}
\begin{split}
\label{ineq::Control sparse and spread vectors - bound sup norm B(x,y) in space B_20 B_2 intfy - eqn2}
\E &\sup_{ x \in N_{B_{2,0}}, y \in N_{B_{2,\infty}}}  \norm{\tensor H\times_1 x\times_2 y \cdots}_2^q \\
&\leq (2M)^{(d-2)(q-1)} \left( \sum_{k=1}^{2M-1} \cdots \sum_{k'=1}^{2M-1} \E \sup_{x \in N_{B_{2,0}}, y \in N_k,...,z \in N_{k'}}  \norm{\tensor H\times_1 x \times_2 y \cdots \times_d z}_2^q \right).
\end{split}
\end{equation}

\noindent We now need a bound, in expectation, for the $q$-th power of the $\ell_2$ norm for each of the $(2M)^{d-2}$ terms. Fortunately, this bound has essentially already been derived in Section~\ref{subsect::Control spread vectors}. We start by noting that the bound of eqn.~(\ref{ineq::Control spread vectors - probabilistic bound norm B(x,y)}) holds when \textit{at least one} of the vectors in the product $\tensor H\times_1 x\times_2 y \cdots $ belongs to $N_k$. Thus,
\begin{equation}\label{ineq::Control sparse and spread vectors - probabilistic bound norm B(x,y)}
\Prob \left( \norm{\tensor H \times_1 x \times_2 y \cdots \times_d z}_2 \geq  \alpha  +  t \sqrt{2} \frac{1}{2^{\max\{k,...,k' \}/2}\sqrt{\lambda n}}\alpha \right) \leq e^{-t^2}
\end{equation}
holds for any $x \in N_{B_{2,0}}$, $y \in N_k$,..., and $z \in N_{k'}$. We apply a union bound by noting that from Lemma~\ref{lem:netsizes} the cardinalities of $N_k$ are upper bounded by $e^{2^k \lambda n \ln (e/2^{k-1} \lambda)} \leq e^{2^k \lambda n \ln (2e/\lambda)}$. Combining with eqn.~(\ref{eqn:nn1}) we get that the total number of possible vectors over which the \textit{sup} of eqn.~(\ref{ineq::Control sparse and spread vectors - probabilistic bound norm B(x,y)}) is computed does not exceed
\begin{equation}
\begin{split}
\nonumber
\left(\frac{5e}{\lambda}\right)^{\lambda n} \underbrace{ e^{2^k \lambda n  \ln(2e/\lambda)} \cdots e^{2^{k'} \lambda n \ln(2e/\lambda)} }_{(d-2) \text{ terms}} \leq e^{(d-1) 2^{\max\{k,...,k' \}} \lambda n  \ln(5e/\lambda) }.
\end{split}
\end{equation}
We can now use a standard union bound over all $x \in N_{B_{2,0}}$, $y \in N_k$,..., and $z \in N_{k'}$ to get
\begin{eqnarray*}
\Prob \left( \sup\norm{\tensor H\times_1 x_i\times_2 y \cdots}_2 \geq  \alpha  +  t \sqrt{2} \frac{1}{2^{\max\{k,...,k' \}/2} \sqrt{\lambda n}} \alpha \right) &\leq& e^{-t^2 + (d-1) 2^{\max\{k,...,k' \}} \lambda n  \ln(5e/\lambda) }.
\end{eqnarray*}
We are now ready to apply Lemma \ref{lem::convert from probabilistic bound to expectation bound} with $h = (d-1) 2^{\max\{k,...,k' \}} \lambda n  \ln(5e/\lambda) $, $a = \alpha$ and $b = \frac{\sqrt{2}}{2^{\max\{k,...,k' \}/2}\sqrt{\lambda n}} \alpha$ to get
$$
\E \sup_{ x \in N_{B_{2,0}},  y \in N_k, \cdots}  \norm{\tensor H\times_1 x\times_2 y\cdots}_2^q \leq 3\sqrt{q}\left(a + b\sqrt{h} + b\sqrt{q/2}\right)^q.
$$
Combining with eqns.~(\ref{ineq::Control sparse and spread vectors - bound sup norm B(x,y) in space B_20 B_2 intfy}) and (\ref{ineq::Control sparse and spread vectors - bound sup norm B(x,y) in space B_20 B_2 intfy - eqn2}) we get
$$
\E \sup_{ x \in B_{2,\infty},  y \in B_{2,0}} \norm{\tensor H\times_1 x\times_2 y\cdots}_2^q \leq 3\sqrt{q}\left(2^{d-1} (2M)^{d-2}\left(a + b\sqrt{h} + b\sqrt{q/2}\right)\right)^q.
$$
The proof follows by substituting the values of $a$, $b$, and $h$ in the above equation together with the fact that $M = \lceil 2+ \log_2 \frac{1}{\sqrt{\lambda}} \rceil \leq 3 + \log_2 \frac{1}{\sqrt{\lambda}} \leq 2 \log_2 \frac{1}{\sqrt{\lambda}} = \log_2 1/\lambda$.
\end{proof} 
\subsubsection{Concluding the proof of Theorem~\ref{thm::bound tensor (E norm(B) ^q)^(1/q)}}

Given the results of the preceding sections we can now conclude the proof of Theorem~\ref{thm::bound tensor (E norm(B) ^q)^(1/q)}. We combine Lemmas~\ref{prop::Control sparse vectors - final bound for expected norm of B(x,y) over B_20},~\ref{prop::Control spread vectors - final bound for expected norm of B(x,y) over B_20 infty}, and~\ref{prop::Control sparse and spread vectors - final bound for expected norm of B(x,y) over B_20 B_2 infty} in order to bound terms~(\ref{eqn:term1}), (\ref{eqn:term2}), (\ref{eqn:term3}), and (\ref{eqn:term4}). First,
\begin{equation*}
\begin{split}
(\E \norm{\tensor H}_2^q )^{1/q} \leq & (3\sqrt{q})^{1/q}  \,\,\, 2^{d-1} \left(\alpha +  \beta \sqrt{2 d \lambda n \ln (5 e/\lambda)} + \beta \sqrt{q} \right) \\
&+ (3\sqrt{q})^{1/q} \,\,\,  4^{d-1} \left( \log_2 1/\lambda \right)^{d-1} \left(\alpha + \alpha\sqrt{2d\ln \left(5e/\lambda\right)} + \alpha \sqrt{\frac{q}{\lambda n}}\right) \\
&+ \left(2^{d-1}-2\right) \times  (3\sqrt{q})^{1/q} \,\,\, 4^{d-1} \left( \log_2 1/\lambda \right)^{d-2} \left(\alpha + \alpha\sqrt{2d\ln \left(5e/\lambda\right)} + \alpha \sqrt{\frac{q}{\lambda n}}\right).
\end{split}
\end{equation*}
In the above bound we leveraged the observation that the right-hand side of the bound in Lemma~\ref{prop::Control sparse and spread vectors - final bound for expected norm of B(x,y) over B_20 B_2 infty} is also an upper bound for the right-hand side of the bound in Lemma~\ref{prop::Control spread vectors - final bound for expected norm of B(x,y) over B_20 infty} for all $\lambda \leq 1$. It is also crucial to note that the constant $2^{d-1}-2$ that appears in the second term of the above inequality emerges since for general order-$d$ tensors we would have to account for a total of $2^{d-1}$ terms in the last inequality of Section~\ref{sxn:ECintro}. Clearly, for order-3 tensors, this inequality has a total of four terms. Simplifying the right-hand side via the assumption $q \leq 2d \lambda n \ln (5e/\lambda)$ and the fact that $q^{1/q}$ is bounded by $e$, we obtain
\begin{equation}
\label{ineq::Control sparse and spread vectors - Final expected norm bound for B}
(\E \norm{\tensor H}_2^q )^{1/q} \leq c_1 8^{d-1} \left( \alpha [ \log_2 1/\lambda]^{d-1} + \beta \sqrt{\lambda n} \right) \sqrt{2d \ln(5e/\lambda)} ,
\end{equation}
where $c_1$ is a small constant. We now remind the reader that the entries $\tensor H_{ijk}$ of the tensor $\tensor H$ are equal to $g_{ijk} \tensor A_{ijk}$, where the $g_{ijk}$'s are standard Gaussian random variables. Thus,
$$
\E \norm{\tensor H}_2^q = \E_{g}\norm{\sum_{i,j,k} g_{ijk} \tensor A_{ijk} \cdot  e_i \otimes e_j \otimes e_k}_2^q.
$$
\noindent Substituting eqn.~(\ref{ineq::Control sparse and spread vectors - Final expected norm bound for B}) to eqn.~(\ref{ineq::bound expectation of (pi_omega - pI) (T) - step 4}) yields
\begin{equation}
\label{ineq::bound E || B || - last step}
\begin{split}
\left(\E_{\tensor A} \norm{\tensor A - \widehat{\tensor A}}_2^q\right)^{1/q} &\leq \sqrt{2 \pi}\left( \E_{\tensor A} \left[ c_1 8^{d-1} \left( \alpha [ \log_2 1/\lambda ]^{d-1} + \beta \sqrt{\lambda n} \right) \right]^q  \right)^{1/q} \\
&= c_2 8^d \sqrt{2d \ln \left( \frac{5e}{\lambda} \right)}  \left[ \E_{\tensor A} \left( \alpha [ \log_2 1/\lambda ]^{d-1} + \beta \sqrt{\lambda n} \right)^q \right]^{1/q} \\
&\leq c_3 8^d \sqrt{2d \ln \left( \frac{5e}{\lambda} \right)}  \left( [ \log_2 1/\lambda ]^{d-1} \left( \E_{\tensor A} \alpha^q \right)^{1/q}  + \sqrt{\lambda n} \left( \E_{\tensor A} \beta^q \right)^{1/q} \right),
\end{split} 
\end{equation}
where the last inequality follows from Lemma \ref{lem::bound sum of power of q}. Finally, we rewrite the $\E_{\tensor A} \alpha^q$ as
\begin{eqnarray*}
\E_{\tensor A} \alpha^q &=&  \E_{\tensor A} \max\left\{\max_{i,j} \left(\sum_{k=1}^n\tensor A_{ijk}^2\right)^{q/2}, \max_{i,k} \left(\sum_{j=1}^n\tensor A_{ijk}^2\right)^{q/2}, \max_{j,k} \left(\sum_{i=1}^n\tensor A_{ijk}^2\right)^{q/2}\right\}\\
&\leq& \E_{\tensor A} \max_{i,j} \left(\sum_{k=1}^n\tensor A_{ijk}^2\right)^{q/2} + \E_{\tensor A} \max_{i,k} \left(\sum_{j=1}^n\tensor A_{ijk}^2\right)^{q/2} + \E_{\tensor A} \max_{j,k} \left(\sum_{i=1}^n\tensor A_{ijk}^2\right)^{q/2}.
\end{eqnarray*}
More generally, for any order-$d$ tensor, we get
$$
\E_{\tensor A} \alpha^q \leq \sum_{j=1}^d \E_{\tensor A} \max_{i_1,\ldots,i_{j-1},i_{j+1},\ldots,i_d} \left(\sum_{i_j=1}^n \tensor A_{i_1 \ldots i_{j-1} i_j i_{j+1} ... i_d}^2\right)^{q/2}.
$$
Combining the above inequality and eqn.~(\ref{ineq::bound E || B || - last step}) concludes the proof of Theorem \ref{thm::bound tensor (E norm(B) ^q)^(1/q)}.

\section{Proving Theorem \ref{thm::main theorem of tensor sparsification}}

The main idea underlying our proof is the application of a divide-and-conquer-type strategy in order to decompose the tensor $\tensor A - \widetilde{\tensor A}$ as a sum of tensors whose entries are bounded. Then, we will apply Theorem \ref{thm::bound tensor (E norm(B) ^q)^(1/q)} and Corollary \ref{cor::bound tensor (E norm(B) ^q)^(1/q)} to estimate the spectral norm of each tensor in the summand independently.

To formally present our analysis, let $\tensor A^{[1]} \in \R^{n \times ... \times n }$  be a tensor containing all entries $\tensor A_{i_1...i_d}$ of $\tensor A$ that satisfy  $\tensor A_{i_1...i_d}^2 \geq  2^{-1} \frac{\norm{\tensor A}^2_F}{s}$; the remaining entries of $\tensor A^{[1]}$ are set to zero. Similarly, we let $\tensor A^{[k]} \in \R^{n \times ... \times n}$ (for all $k >1$) be tensors that contain all entries $\tensor A_{i_1...i_d}$ of $\tensor A$ that satisfy $\tensor A_{i_1...i_d}^2 \in \left[ 2^{-k} \frac{\norm{\tensor A}^2_F}{s}, 2^{-k+1} \frac{\norm{\tensor A}^2_F}{s} \right)$; the remaining entries of $\tensor A^{[k]}$ are set to zero. Finally, the tensors $\widetilde{\tensor A}^{[k]}$ (for all $k=1,2,\ldots$) contain the (rescaled) entries of the corresponding tensor $\tensor A^{[k]}$ that were selected after applying the sparsification procedure of Algorithm~1 to $\tensor A$. Given these definitions,
\begin{equation*}
\tensor A =  \sum_{k=1}^{\infty} \tensor A^{[k]} \qquad\text{and}\qquad \widetilde{\tensor A}=\sum_{k=1}^{\infty}\widetilde{\tensor A}^{[k]}.
\end{equation*}
Let $\ell \triangleq \left\lfloor \log_2 \left(n^{d/2} / \ln^d n\right) \right\rfloor$. Then,
\begin{eqnarray*}
\norm{\tensor A - \widetilde{\tensor A}}_2 &=& \norm{\sum_{k=1}^{\infty} \left(\tensor A^{[k]} - \widetilde{\tensor A}^{[k]}\right)}_2\\
&\leq& \norm{\tensor A^{[1]} - \widetilde{\tensor A}^{[1]}}_2 + \sum_{k=2}^{\ell} \norm{\tensor A^{[k]} - \widetilde{\tensor A}^{[k]}}_2
+ \norm{ \sum_{k=\ell+1}^{\infty} \left( \tensor A^{[k]} - \widetilde{\tensor A}^{[k]} \right) }_2.
\end{eqnarray*}
Using the inequality $(\E (x + y)^q)^{1/q} \leq (\E x^q)^{1/q} + (\E y^q)^{1/q}$, we conclude that
\begin{eqnarray}
\label{eqn:step1}\left(\E\norm{\tensor A - \widetilde{\tensor A}}_2^q\right)^{1/q} &\leq& \left(\E \norm{\tensor A^{[1]} - \widetilde{\tensor A}^{[1]}}_2^q\right)^{1/q}\\
\label{eqn:step2}&+& \sum_{k=2}^{\ell} \left(\E\norm{\tensor A^{[k]} - \widetilde{\tensor A}^{[k]}}_2^q\right)^{1/q}\\
\label{eqn:step3}&+& \left(\E \norm{\sum_{k=\ell+1}^{\infty}\left(\tensor A^{[k]} - \widetilde{\tensor A}^{[k]}\right)}^q_2 \right)^{1/q}.
\end{eqnarray}
The remainder of the section will focus on the derivation of bounds for terms~(\ref{eqn:step1}),~(\ref{eqn:step2}), and~(\ref{eqn:step3}) of the above equation.

\subsection{Term~(\ref{eqn:step1}): Bounding the spectral norm of $\tensor A^{[1]} - \widetilde{\tensor A}^{[1]}$}\label{sxn:tterm1}

The main result of this section is summarized in the following lemma.
\begin{lem}
\label{lem::final bound of E norm B^1}
Let $q \leq \frac{5n}{8}$. Then,
$$
\left(\E \norm{\tensor A^{[1]} - \widetilde{\tensor A}^{[1]}}^q \right)^{1/q} \leq  c_1 48^d d^{1/q+1/2} \sqrt{\frac{n \norm{\tensor A}_F^2}{s}},
$$
where $c_1$ is a small numerical constant.
\end{lem}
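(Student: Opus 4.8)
The plan is to recognize $\tensor A^{[1]} - \widetilde{\tensor A}^{[1]}$ as a zero-mean random tensor with \emph{bounded} entries and then feed it into Corollary~\ref{cor::bound tensor (E norm(B) ^q)^(1/q)}. First I would observe that by construction $\E\widetilde{\tensor A}^{[1]} = \tensor A^{[1]}$: at positions where Algorithm~1 keeps the entry deterministically the two tensors agree, while at the randomly sampled positions the rescaling $\tensor A/p$ makes the estimator unbiased. Hence $\tensor B := \widetilde{\tensor A}^{[1]} - \tensor A^{[1]}$ has independent, zero-mean entries (most of them identically zero), so Corollary~\ref{cor::bound tensor (E norm(B) ^q)^(1/q)} applies and reduces the task to bounding, for each mode $j$, the quantity $\E\max_{i_1,\ldots,i_{j-1},i_{j+1},\ldots,i_d}(\sum_{i_j}\tensor B_{i_1\ldots i_d}^2)^{q/2}$. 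By the symmetry of the bound across modes it suffices to control one mode, say the last, and multiply by $d$ (this produces the $d^{1/q}$ factor); meanwhile the prefactor $c_1 8^d(\sqrt{d\ln n}+\sqrt q)$ of Corollary~\ref{cor::bound tensor (E norm(B) ^q)^(1/q)} becomes $c_2 2^{3(d-1)}\sqrt{d\ln n}$ after using $8^d = 8\cdot 2^{3(d-1)}$ and $\sqrt q = \sqrt{\ln n}\le\sqrt{d\ln n}$.

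The core of the argument is therefore a concentration estimate for the fiber sums $Y_{i_1\ldots i_{d-1}} = \sum_{i_d}\tensor B_{i_1\ldots i_d}^2$. Here I would exploit that the support of $\tensor B$ lies exactly on the sampled entries of $\tensor A^{[1]}$, i.e.\ those with $\tensor A_{i_1\ldots i_d}^2 \in [\tfrac12\norm{\tensor A}_F^2/s, \norm{\tensor A}_F^2/s)$, for which the sampling probability satisfies $p_{i_1\ldots i_d}\in[\tfrac12,1)$. Two facts follow by elementary computation: each nonzero $\tensor B_{i_1\ldots i_d}^2$ is bounded by a constant multiple of $\norm{\tensor A}_F^2/s$, and $\E\tensor B_{i_1\ldots i_d}^2 = \norm{\tensor A}_F^2/s - \tensor A_{i_1\ldots i_d}^2 \le \norm{\tensor A}_F^2/s$. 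Since a fiber has at most $n$ entries, $\E Y_{i_1\ldots i_{d-1}}\le n\norm{\tensor A}_F^2/s$ and $\sum_{i_d}\V(\tensor B^2_{i_1\ldots i_d})\lesssim n$ after rescaling by the entry bound (using $\tensor B^4\le (\mathrm{const}\cdot\norm{\tensor A}_F^2/s)\,\tensor B^2$). I would then apply Bennett's inequality (Lemma~\ref{thm::Bennett's inequality}) to the centered, normalized summands to get, for each fixed fiber, a tail bound $\Prob(Y > \E Y + t\cdot\mathrm{const}\cdot\norm{\tensor A}_F^2/s)\le e^{-t/2}$ valid once $t$ exceeds the variance floor $\tfrac32\sum\V\sim n$.

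Combining the per-fiber tails with a union bound over the $n^{d-1}$ fibers yields $\Prob(\max Y \ge a + tb)\le e^{-t/2 + (d-1)\ln n}$, where $a$ absorbs both the mean $n\norm{\tensor A}_F^2/s$ and the Bennett variance-floor offset (itself $O(n\norm{\tensor A}_F^2/s)$) and $b = O(\norm{\tensor A}_F^2/s)$. Reparametrizing and invoking Lemma~\ref{lem::convert from probabilistic bound to expectation bound}(a) with $h = (d-1)\ln n$ and exponent $q/2$ then gives $\E(\max Y)^{q/2}\le 2(a + bh + b\,q/2)^{q/2}$. Taking $q^{-1}$-th powers and setting $q = \ln n$ converts this into $(\E(\max Y)^{q/2})^{1/q}\lesssim\sqrt{(7n + 2d\ln n)/s}\,\norm{\tensor A}_F$, where the $7n$ descends from the mean-plus-variance-floor constant $a$ and the $2d\ln n$ from the union-bound entropy $h$ passed through Lemma~\ref{lem::convert from probabilistic bound to expectation bound}. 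Assembling this with the Corollary~\ref{cor::bound tensor (E norm(B) ^q)^(1/q)} prefactor and the $d^{1/q}$ factor yields the claimed inequality.

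I anticipate the main obstacle to be the concentration step rather than the bookkeeping. The delicate point is that Bennett's inequality in the form of Lemma~\ref{thm::Bennett's inequality} only controls the upper tail \emph{above} the threshold $t\ge\tfrac32\sum\V$, whose magnitude is proportional to $n$; this is precisely the source of the unavoidable $n$ inside the square root, and one must fold this threshold into the constant $a$ so that Lemma~\ref{lem::convert from probabilistic bound to expectation bound}(a) --- which requires a tail bound for \emph{all} $t\ge 0$ --- can legitimately be invoked (for $t$ below the threshold the tail probability is bounded trivially by $1$, which is dominated by $e^{-t+h}$ in the relevant range). Getting the entry bound, the variance bound, and the per-fiber cardinality bound to combine into the stated constants $7n$ and $2d\ln n$ is the one place where care with the elementary estimates is genuinely needed.
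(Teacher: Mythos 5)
Your proposal is correct and follows essentially the same route as the paper's proof: recognize $\tensor A^{[1]}-\widetilde{\tensor A}^{[1]}$ as a zero-mean tensor with entries bounded by $\norm{\tensor A}_F^2/s$ (thanks to $p_{i_1\ldots i_d}\in[\tfrac12,1)$), invoke Corollary~\ref{cor::bound tensor (E norm(B) ^q)^(1/q)}, control each fiber sum via Bennett's inequality with the $\tfrac32\sum\V$ threshold folded into the constant, union-bound over the $n^{d-1}$ fibers, and convert to a moment bound via Lemma~\ref{lem::convert from probabilistic bound to expectation bound}(a). The only cosmetic difference is that you apply Lemma~\ref{lem::convert from probabilistic bound to expectation bound}(a) directly at exponent $q/2$, whereas the paper applies it at exponent $q$ and then uses $\E X^{q/2}\leq\sqrt{\E X^{q}}$; both yield the stated $\sqrt{(7n+2d\ln n)/s}$ factor.
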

\begin{proof}
For notational convenience, let $\tensor B = \tensor A^{[1]} - \widetilde{\tensor A}^{[1]}$ and let $\tensor B_{i_1...i_d}$ denote the entries of $\tensor B$. Recall that $\tensor A^{[1]}$ only contains entries of $\tensor A$ whose squares are greater than or equal to $2^{-1} \frac{\norm{\tensor A}^2_F}{s}$. Also, recall that $\widetilde{\tensor A}^{[1]}$ only contains the (rescaled) entries of $\tensor A^{[1]}$ that were selected after applying the sparsification procedure of Algorithm~1 to $\tensor A$. Using these definitions, $\tensor B_{i_1...i_d}$ is equal to:
\begin{equation*}
\tensor B_{i_1...i_d} =
\begin{cases}
0 & \text{,if     } \tensor A_{i_1...i_d}^2 < 2^{-1}\frac{\norm{\tensor A}_F^2}{s} \\
0 & \text{,if     } \tensor A_{i_1...i_d}^2 \geq \frac{\norm{\tensor A}_F^2}{s} \text{\ \ \   (since $p_{i_1...i_d}=1$ )} \\
\left(1-p_{i_1...i_d}^{-1}\right) \tensor A_{i_1...i_d} & \text{,with probability     } p_{i_1...i_d} = \frac{s \tensor A^2_{i_1...i_d}}{\norm{\tensor A}^2_F} < 1\\
\tensor A_{i_1...i_d} & \text{,with probability   } 1-p_{i_1...i_d}
\end{cases}
\end{equation*}
It is easily seen from the formula of $\tensor B_{i_1...i_d}$ that $\tensor B_{i_1...i_d}^2 \leq \frac{\tensor A^2_{i_1...i_d}}{p^2_{i_1...i_d}} \leq \frac{\norm{\tensor A}_F^4}{s^2 \tensor A^2_{i_1...i_d}} \leq \frac{ \norm{\tensor A}_F^2}{s} $, which leads to
$$
\left( \E \tensor B_{i_1...i_d}^q \right)^{1/q} \leq \sqrt{\frac{\norm{\tensor A}^2_F}{s}}.
$$
\noindent In addition, we have for any $j$, $\max_{i_1,...,i_{j-1},i_{j+1},...,i_d} \sum_{i_j=1}^n \tensor B^2_{i_1...i_{j-1} i_{j+1} ...i_d} \leq \frac{n \norm{\tensor B}_F^2}{s}$, which leads to
\begin{equation}
\begin{split}
\nonumber
\left( \sum_{j=1}^d \E \max_{i_1,\ldots,i_{j-1},i_{j+1},\ldots,i_d} \left(\sum_{i_j=1}^n \tensor B_{i_1 \ldots i_{j-1} i_j i_{j+1} ... i_d}^2\right)^{q/2}\right)^{1/q} &\leq  \left( \sum_{j=1}^d \E \max_{i_1,\ldots,i_{j-1},i_{j+1},\ldots,i_d} \left(\sum_{i_j=1}^n \frac{\norm{\tensor A}_F^2}{s} \right)^{q/2}\right)^{1/q} \\
&\leq \left( d \left(n \frac{\norm{\tensor A}_F^2}{s} \right)^{q/2}\right)^{1/q} = d^{1/q} \sqrt{\frac{n \norm{\tensor A}_F^2 }{s}} .
\end{split}
\end{equation}

\noindent We will estimate the quantity $\left(\E \norm{\tensor B}_2^q \right)^{1/q}$ via Corollary \ref{cor::bound tensor (E norm(B) ^q)^(1/q)} as follows:
\begin{equation}\label{eqn:kequal1bound}
\begin{split}
\left( \E \norm{\tensor B}^q \right)^{1/q} \leq c 8^d \sqrt{2d \ln \left(\frac{5e}{\lambda} \right)} \left( \left[ \log_2 \frac{1}{\lambda}  \right]^{d-1} d^{1/q} \sqrt{\frac{n \norm{\tensor A}_F^2}{s}} + \sqrt{\lambda n} \sqrt{\frac{\norm{\tensor A}_F^2}{s}}\right).
\end{split}
\end{equation}

\noindent The proof follows by setting $\lambda = \frac{1}{64}$.
\end{proof}

\subsection{Term~(\ref{eqn:step2}): Bounding the spectral norm of $\tensor A^{[k]}-\widetilde{\tensor A}^{[k]}$ for small $k$}

We now focus on estimating the spectral norm of the tensors $\tensor A^{[k]} - \widetilde{\tensor A}^{[k]}$ for $2 \leq k \leq \left\lfloor \log_2 \left(n^{d/2} /\ln^{d/2} n\right) \right\rfloor$. The following lemma summarizes the main result of this section.
\begin{lem}
\label{lem::final bound of E norm B^k}
Assume that $q \leq 2d \lambda_k n \ln 5e/\lambda_k$; for all $2 \leq k \leq \left\lfloor \log_2 \left(n^{d/2} /\ln^{d/2} n\right) \right\rfloor$ and $\lambda_k \leq 1/64$,
$$
\left(\E \norm{\tensor A^{[k]}-\widetilde{\tensor A}^{[k]}}_2^q \right)^{1/q} \leq c_2 8^d \sqrt{2d \ln \left(\frac{5e}{\lambda_k} \right)} \left( \left[ \log_2 \frac{1}{\lambda_k} \right]^{d-1} (2d)^{\frac{1}{2q}} \sqrt{5n + (d\ln n+q)2^{k+1}} + \sqrt{\lambda_k 2^k n}  \right) \sqrt{\frac{\norm{\tensor A}^2_F}{s}},
$$
where $c_2$ is a small numerical constant.
\end{lem}
\begin{proof}
For notational convenience, we let $\widetilde{\tensor A}_{i_1...i_d}$ denote the entries of the tensor $\widetilde{\tensor A}^{[k]}$. Then,
\begin{equation}\label{eqn:define_tildeAij}
\widetilde{\tensor A}_{i_1 ... i_d} = \frac{\delta_{i_1...i_d} \tensor A_{i_1...i_d}}{p_{i_1...i_d}},
\end{equation}
for those entries $\tensor A_{i_1...i_d}$ of $\tensor A$ satisfying $\tensor A_{i_1...i_d}^2 \in \left[ \frac{\norm{\tensor A}^2_F}{2^k s}, \frac{\norm{\tensor A}^2_F}{2^{k-1} s}\right)$. All the entries of $\widetilde{\tensor A}^{[k]}$ that correspond to entries of $\tensor A$ outside this interval are set to zero. The indicator function $\delta_{i_1...i_d}$ is defined as
\begin{equation*}
\delta_{i_1...i_d} =
\begin{cases}
1 & \text{with probability     } p_{i_1...i_d} = \frac{s \tensor A^2_{i_1...i_d}}{\norm{\tensor A}^2_F} \leq 1\\
0 & \text{with probability   } 1-p_{i_1...i_d}
\end{cases}
\end{equation*}
Notice that $p_{i_1...i_d}$ is always in the interval $\left[2^{-k},2^{-(k-1)}\right)$ from the constraint on the size of $\tensor A_{i_1...i_d}^2$. It is now easy to see that $\E \widetilde{\tensor A}^{[k]} = \tensor A^{[k]}$. Thus, by applying Theorem \ref{thm::bound tensor (E norm(B) ^q)^(1/q)} with the parameter $\lambda_k$,
\begin{equation}
\label{ineq::expected norm of A^k - tilde A^k}
\begin{split}
\left(\E \norm{\tensor A^{[k]}-\widetilde{\tensor A}^{[k]}}_2^{q}\right)^{\frac{1}{q}} \leq
c 8^d \sqrt{2d \ln \left(\frac{5e}{\lambda_k} \right)} \left( \left[ \log_2 \frac{1}{\lambda_k} \right]^{d-1} \left( \sum_{j=1}^d \E \alpha_j^q \right)^{\frac{1}{q}} + \sqrt{\lambda_k n} \left( \E \beta^q \right)^{\frac{1}{q}} \right),
\end{split}
\end{equation}
where
$$
\alpha_j^2 \triangleq \max_{i_1,\ldots,i_{j-1},i_{j+1},\ldots,i_d} \left(\sum_{i_j=1}^n \widetilde{\tensor A}_{i_1 \ldots i_{j-1} i_j i_{j+1} ... i_d}^2\right) \quad \text{and} \quad \beta = \max_{i_1,...,i_d} | \widetilde{\tensor A}_{i_1...i_d}|.
$$
We now follow the same strategy as in Section~\ref{sxn:tterm1} in order to estimate the expectation terms in the right-hand side of the above inequality (i.e., we focus on the first term ($j=1$) only). First, note that
\begin{eqnarray*}
\E \max_{i_2,\ldots,i_d} \left(\sum_{i_1=1}^n \widetilde{\tensor A}_{i_1 \ldots i_d}^2\right)^{q/2} \leq
\sqrt{\E \max_{i_2,\ldots,i_d} \left(\sum_{i_1=1}^n \widetilde{\tensor A}^2_{i_1...i_d} \right)^q}.
\end{eqnarray*}
Let $S_{i_2...i_d} = \sum_{i_1} \widetilde{\tensor A}^2_{i_1...i_d}$. Then, using eqn.~(\ref{eqn:define_tildeAij}), the definition of $p_{i_1...i_d}$, and $\delta_{i_1...i_d}^2 = \delta_{i_1...i_d}$, we get
$$
S_{i_2...i_d} = \sum_{i_1=1}^n \frac{\delta_{i_1...i_d}}{p^2_{i_1 ...i_d}} \tensor A^2_{i_1...i_d} = \sum_{i_1=1}^n \delta_{i_1...i_d} \frac{\norm{\tensor A}^4_F}{\tensor A^4_{i_1...i_d} s^2} \tensor A^2_{i_1...i_d} = \sum_{i_1=1}^n \delta_{i_1...i_d} \frac{\norm{\tensor A}^4_F}{ s^2 \tensor A_{i_1...i_d}^2}.
$$
Using $\tensor A^2_{i_1...i_d} \geq \frac{2^{-k}\norm{\tensor A}^2_F}{s}$, we get $S_{i_2...i_d} \leq \frac{2^k \norm{\tensor A}^2_F}{ s} \left(\sum_{i_1} \delta_{i_1...i_d}\right)$, which leads to
\begin{equation}
\label{ineq::expected bound of max_i sum_j tilde(a)^2}
\E  \max_{i_2,\ldots,i_d} \sum_{i_1=1}^{n} \left(\widetilde{\tensor A}^2_{i_1 ...i_d} \right)^q = \E \max_{i_2,\ldots,i_d} S_{i_2...i_d}^q \leq \left( \frac{2^k \norm{\tensor A}^2_F}{ s} \right)^q  \E
\max_{i_2,\ldots,i_d} \left(\sum_{i_1=1}^{n} \delta_{i_1...i_d}\right)^q.
\end{equation}
We now seek a bound for the expectation $\E \max_{i_2,\ldots,i_d} \sum_{i_1} \left(\delta_{i_1...i_d} \right)^q$. The following lemma, whose proof may be found in the Appendix, provides such a bound.
\begin{lem}\label{lem::bound expected max_i sum_j delta_ij}
For any $q \geq 1$, we have
$$
\E \max_{i_2,\ldots,i_d} \left(\sum_{i_1=1}^n \delta_{i_1...i_d}\right)^q \leq 2 \left(5 n 2^{-k} + 2d\ln n + 2q \right)^q.
$$
\end{lem}
\noindent Combining Lemma~\ref{lem::bound expected max_i sum_j delta_ij} and equation (\ref{ineq::expected bound of max_i sum_j tilde(a)^2}), we obtain
$$
\E  \left(\max_{i_2,\ldots,i_d} \sum_{i_1=1}^{n} \widetilde{\tensor A}^2_{i_1...i_d}\right)^q \leq 2 \left( \frac{ (5n + 2 (d \ln n + q) 2^k) \norm{\tensor A}_F^2}{s} \right)^q.
$$
The same bound can be derived for all other terms in the first summand of eqn.~(\ref{ineq::expected norm of A^k - tilde A^k}). Thus,
\begin{equation*}
\begin{split}
\left( \sum_{j=1}^d \E \alpha_j^q \right)^{\frac{1}{q}} &\leq \left( \sum_{j=1}^d \E \left( \max_{i_1,\ldots,i_{j-1},i_{j+1},\ldots,i_d} \sum_{i_j=1}^n \widetilde{\tensor A}_{i_1 \ldots i_{j-1} i_j i_{j+1} ... i_d}^2\right)^q \right)^{\frac{1}{2q}} \\
&\leq (2d)^{\frac{1}{2q}} \sqrt{ \frac{(5n + (d \ln n+q) 2^{k+1}) \norm{\tensor A}_F^2}{s} }.
\end{split}
\end{equation*}

\noindent In addition, we have $\widetilde{\tensor A}^2_{i_1 ... i_d} \leq \frac{\tensor A^2_{i_1...i_d}}{p^2_{i_1...i_d}} \leq \frac{\norm{\tensor A}_F^4}{s^2 \tensor A^2_{i_1...i_d}} \leq \frac{2^k \norm{\tensor A}_F^2}{s} $. Thus,
$$
(\E \beta^q)^{\frac{1}{q}} \leq \sqrt{\frac{2^k \norm{\tensor A}^2_F}{s}}.
$$

\noindent Substituting these two inequalities into eqn.~(\ref{ineq::expected norm of A^k - tilde A^k}) we get the claim of the lemma.
\end{proof}

\subsection{Term~(\ref{eqn:step3}): bounding the tail}

We now focus on values of $k$ that exceed $\ell = \left\lfloor \log_2 \left(n^{d/2} /\ln^d n\right) \right\rfloor$ and prove the following lemma, which immediately provides a bound for term~(\ref{eqn:step3}).
\begin{lem}\label{lem:term3}
Using our notation,
$$\norm{\sum_{k = \ell+1}^{\infty} \left(\tensor A^{[k]} - \widetilde{\tensor A}^{[k]}\right)}_2 \leq \sqrt{\frac{n^{d/2} \ln^d n}{s}} \norm{\tensor A}_F.$$
\end{lem}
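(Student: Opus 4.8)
The plan is to discard the spectral norm in favour of the Frobenius norm and to show that, after this relaxation, the entire tail becomes a deterministic, easily-counted object. First I would invoke the elementary bound $\norm{\tensor C}_2 \leq \norm{\tensor C}_F$, valid for every order-$d$ tensor $\tensor C$ (it follows from the definition of the tensor spectral norm in Section~\ref{sxn:notation} together with Cauchy--Schwarz, since each test vector $x_i$ is a unit vector and hence $\abs{\tensor C \times_1 x_1 \ldots \times_d x_d} \leq \norm{\tensor C}_F$), applied to $\tensor C = \sum_{k=\ell+1}^{\infty}\left(\tensor A^{[k]} - \widetilde{\tensor A}^{[k]}\right)$. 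This reduces the lemma to an upper bound on $\norm{\tensor C}_F$.

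The key structural observation is that the cutoff $\ell = \left\lfloor \log_2\left(n^{d/2}/\ln^2 n\right)\right\rfloor$ is calibrated to the zeroing threshold of Algorithm~1. Every entry collected in a tail tensor $\tensor A^{[k]}$ with $k > \ell$ has squared magnitude below $2^{-\ell}\frac{\norm{\tensor A}_F^2}{s}$, and this cutoff is matched to the algorithm's zeroing threshold $\frac{\ln^2 n}{n^{d/2}}\frac{\norm{\tensor A}_F^2}{s}$. Consequently every such entry falls into the first (zero-out) branch of Algorithm~1, so $\widetilde{\tensor A}^{[k]} = 0$ for all $k > \ell$. The decisive gain is that the tail then carries no randomness at all: $\tensor C = \sum_{k=\ell+1}^{\infty}\tensor A^{[k]}$ is precisely the deterministic tensor formed by the entries of $\tensor A$ that Algorithm~1 sets to zero, and each of those entries has squared magnitude at most $\frac{\ln^2 n}{n^{d/2}}\frac{\norm{\tensor A}_F^2}{s}$.

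The bound then follows by a trivial counting argument. Since an order-$d$ tensor has at most $n^d$ entries, I would write $\norm{\tensor C}_F^2 = \sum \tensor A_{i_1\ldots i_d}^2 \leq n^d\cdot \frac{\ln^2 n}{n^{d/2}}\frac{\norm{\tensor A}_F^2}{s} = \frac{n^{d/2}\ln^2 n}{s}\norm{\tensor A}_F^2$, where the sum ranges over the zeroed entries only. Taking square roots and combining with $\norm{\tensor C}_2 \leq \norm{\tensor C}_F$ gives exactly the stated inequality, with no hidden constant.

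The one step I would treat with care --- and the only real obstacle --- is the threshold bookkeeping in the middle paragraph, namely confirming that for $k > \ell$ no sampled (and hence rescaled, possibly large) entry survives in the tail. Because $\ell$ is a floor, one must verify that $2^{-\ell}\frac{\norm{\tensor A}_F^2}{s}$ indeed lines up with the algorithmic threshold $\frac{\ln^2 n}{n^{d/2}}\frac{\norm{\tensor A}_F^2}{s}$, so that the entire tail collapses to zeroed entries; this identification is what simultaneously eliminates the randomness and pins the constant in the bound to $1$. Everything downstream of it is the one-line spectral-to-Frobenius estimate and the counting bound above, and in particular no measure-concentration machinery (neither Theorem~\ref{thm::bound tensor (E norm(B) ^q)^(1/q)} nor Corollary~\ref{cor::bound tensor (E norm(B) ^q)^(1/q)}) is needed here.
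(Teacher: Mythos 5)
Your proof is correct and is essentially identical to the paper's: the paper likewise observes that for all $k \geq \ell+1$ every entry of $\tensor A^{[k]}$ lies below the zeroing threshold of Algorithm~1, so that $\widetilde{\tensor A}^{[k]} = 0$ and the tail $\tensor D = \sum_{k \geq \ell+1} \tensor A^{[k]}$ is deterministic, and then bounds $\norm{\tensor D}_2 \leq \norm{\tensor D}_F$ and counts at most $n^d$ entries each of squared magnitude at most $\frac{\ln^2 n}{n^{d/2}}\frac{\norm{\tensor A}_F^2}{s}$. The floor-function bookkeeping you single out as the delicate step is indeed the only subtle point (with $\ell = \lfloor \log_2(n^{d/2}/\ln^2 n)\rfloor$ the alignment of $2^{-\ell}$ with the threshold can be off by a factor of $2$ unless the logarithm is an integer); the paper glosses over this with the words ``by definition,'' so your attempt is, if anything, more candid about it than the original.
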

\begin{proof}
Intuitively, by the definition of $\tensor A^{[k]}$, we can observe that when $k$ is larger than $\ell = \left\lfloor \log_2 \left(n^{d/2} /\ln^d n\right) \right\rfloor$, the entries of $\tensor A^{[k]}$ are very small, whereas the entries of $\widetilde{\tensor A}^{[k]}$ are all set to zero during the second step of our sparsification algorithm. Formally, consider the sum
$$\tensor D = \sum_{k = \ell +1}^{\infty} \left(\tensor A^{[k]} - \widetilde{\tensor A}^{[k]}\right).$$
For all $k \geq \ell + 1 \geq \log_2 \left(n^{d/2} /\ln^d n\right)$, notice that the squares of all the entries of $\tensor A^{[k]}$ are at most $\frac{\ln^d n}{n^{d/2}} \frac{\norm{\tensor A}_F^2}{s}$ (by definition) and thus the tensors $\widetilde{\tensor A}^{[k]}$ are all-zero tensors. The above sum now reduces to
$$\tensor D = \sum_{k = \ell + 1}^{\infty} \tensor A^{[k]},$$
where the squares of all the entries of $\tensor D$ are at most $\frac{\ln^d n}{n^{d/2}} \frac{\norm{\tensor A}_F^2}{s}$. Since $\tensor D \in \R^{n \times...\times n}$, using $\norm{\tensor D}_2 \leq \norm{\tensor D}_F$, we immediately get
\begin{equation*}
\norm{\tensor D}_2 = \norm{\sum_{k = \ell + 1}^{\infty} \left(\tensor A^{[k]} - \widetilde{\tensor A}^{[k]}\right)}_2
\leq \sqrt{\sum_{i_1,i_2,\ldots,i_d=1}^n \tensor D_{i_1...i_d}^2} \leq \sqrt{\frac{n^{d/2} \ln^d n}{s}} \norm{\tensor A}_F.
\end{equation*}
\end{proof}

\subsection{Completing the proof of Theorem \ref{thm::main theorem of tensor sparsification}}

Theorem \ref{thm::main theorem of tensor sparsification} emerges by substituting Lemmas~\ref{lem::final bound of E norm B^1},~\ref{lem::final bound of E norm B^k}, and~\ref{lem:term3} to bound terms~(\ref{eqn:step1}),~(\ref{eqn:step2}), and~(\ref{eqn:step3}). We have
%
%\begin{eqnarray*}
%
\begin{equation}
\label{inq::proof Theorem 1 - 1st step}
\begin{split}
\left(\E\norm{\tensor A - \widetilde{\tensor A}}_2^{2d \ln n}\right)^{\frac{1}{2d\ln n}}
&\leq c_1 48^d d^{1/q+1/2} \sqrt{n}  \frac{\norm{\tensor A}_F}{\sqrt{s}} \\
&+ \sum_{k=2}^{\left\lfloor \log_2 \left(n^{d/2} /\ln^d n\right) \right\rfloor} c_2 8^d \sqrt{2d \ln \frac{5e}{\lambda_k} } \left( \log_2 \frac{1}{\lambda_k} \right)^{d-1} (2d)^{\frac{1}{2q}} \sqrt{5n + (d\ln n+q)2^{k+1} } \frac{\norm{\tensor A}_F}{\sqrt{s}} \\
&+ \sum_{k=2}^{\left\lfloor \log_2 \left(n^{d/2} /\ln^d n\right) \right\rfloor} c_2 8^d \sqrt{2d \ln \frac{5e}{\lambda_k}} \sqrt{2^k \lambda_k n}  \frac{\norm{\tensor A}_F}{\sqrt{s}} \\
&+ \sqrt{n^{d/2} \ln^d n} \frac{\norm{\tensor A}_F}{\sqrt{s}} \triangleq (M_1 + M_2 + M_3 + M_4) \frac{\norm{\tensor A}_F}{\sqrt{s}}.
\end{split}
\end{equation}
%
%\end{eqnarray*}
%
While the first term $M_1$ and the last term $M_4$ on the right-hand side are fixed, the second and third terms largely depends on the choice of parameters $\lambda_k$. We would want to select $\lambda_k$'s such that the right-hand side is as small as possible. For this task, we set
%$$
%\lambda_k \triangleq \frac{1}{16} \frac{1}{2^{2k/d}} \quad\quad \text{for } k = 2,3,...,\log_2 \frac{n^{d/2}}{\ln^d n}.
%$$
$$
\lambda_k \triangleq \frac{1}{n} \quad\quad \text{for } k = 2,3,...,\log_2 \frac{n^{d/2}}{\ln^d n}.
$$
Clearly, $\lambda_k \leq \frac{1}{64}$ as required by Theorem \ref{thm::bound tensor (E norm(B) ^q)^(1/q)}. In addition, the requirement $q \leq 2d \lambda_k n \ln \frac{5e}{\lambda_k}$ is always satisfied as long as $q \leq 2d \ln n$. We set $q \triangleq 2d \ln n$. This immediately implies that the quantity $d^{1/q}$ is bounded by a constant. Let $N \triangleq \left\lfloor \log_2 \frac{n^{d/2}}{\ln^d n} \right\rfloor$ to get
\begin{equation}
\label{inq::series bound}
\begin{split}
\sum_{k=2}^{N} \sqrt{2^k} = \sum_{k=1}^{\lfloor N/2 \rfloor} 2^k + 2^{1/2} \sum_{k=1}^{\lfloor (N-1)/2 \rfloor} 2^k &\leq (2^{\lfloor N/2 \rfloor + 1} -1) + 2^{1/2}  (2^{\lfloor (N-1)/2 \rfloor + 1} -1) \\	
&\leq 4 \times 2^{N/2} \leq 4 \sqrt{\frac{n^{d/2}}{\ln^d n}}
\end{split}
\end{equation}

\noindent Using the fact that $\sqrt{5n + (d\ln n+q) 2^{k+1} } \leq \sqrt{5n} + \sqrt{3d 2^{k+1} \ln n}$ and $\log_2 n \leq \ln n$, we can get the upper bound of $M_2$ as follows:

% assumption that $d\leq 0.5\ln n$; the fact that $n \leq n^{d/2}$ for all $d \geq 2$; and the fact that $2d \ln n \leq 2n$, since $n \geq 300$ and $d \leq 0.5 \ln n$. Then, by manipulating/removing constants we get:
\begin{equation}
\begin{split}
\nonumber
M_2 &\leq \sum_{k=2}^{\log_2 \left(n^{d/2} /\ln^d n \right) } c_4 8^d (2d \ln (5en))^{1/2} (\ln n)^{d-1} ( \sqrt{5n} + \sqrt{3d 2^{k+1} \ln n} ) \\
&\leq c_4 8^d \sqrt{d} \log_2 \left(\frac{n^{d/2}}{\ln^d n} \right) (\ln n)^{d-1/2} \sqrt{n} +  c_5 8^d d \ln^d n \sum_{k=2}^{\log_2 \left(n^{d/2} /\ln^d n \right) } 2^{k/2} \\
&\leq c_5 8^d d^{3/2} n^{1/2} (\ln n)^{d+1/2} + c_6 8^d d (\ln n)^d \sqrt{\frac{n^{d/2}}{\ln^d n} },
\end{split}
\end{equation}
where the last inequality is due to eqn.~(\ref{inq::series bound}). For $d \geq 3$, we derive an upper bound for $M_2$ as follows:
$$
M_2 \leq c_8 8^d d^{3/2} \sqrt{n^{d/2} \ln^d n} \sqrt{\max \left\{ 1, \frac{\ln^{d+1} n}{n^{d/2-1}} \right\} }.
$$

\noindent A similar bound can be derived for $M_3$:
\begin{equation}
\begin{split}
\nonumber
M_3 &= \sum_{k=2}^{\left\lfloor \log_2 \left(n^{d/2} /\ln^d n\right) \right\rfloor} c_2 8^d (2d \ln (5en))^{1/2} \sqrt{2^k} \leq c_9 8^d \sqrt{d} \sqrt{\frac{n^{d/2}}{\ln^{d-1} n}} .
\end{split}
\end{equation}

\noindent Combining the above results and substituting into (\ref{inq::proof Theorem 1 - 1st step}), we get
\begin{equation}
\begin{split}
\nonumber
\left(\E\norm{\tensor A - \widetilde{\tensor A}}_2^{2d\ln n}\right)^{\frac{1}{2d\ln n}} \leq c_{10} 20^d d^{3/2} \sqrt{\max \left\{ 1, \frac{\ln^{d+1} n}{n^{d/2-1}} \right\} } \sqrt{\frac{n^{d/2} \ln^d n}{s}}  \norm{\tensor A}_F,
\end{split}
\end{equation}
where the bound is due to the fact that $48^d \leq 20^d \sqrt{\ln^d n}$ for any $n \geq 320$. Applying Markov's inequality, we conclude that
$$
\norm{\tensor A - \widetilde{\tensor A}}_2 \leq c'_{10} 20^d \sqrt{\max \left\{ 1, \frac{\ln^{d+1} n}{n^{d/2-1}} \right\} } \sqrt{\frac{d^3 n^{d/2} \ln^d n}{s}}  \norm{\tensor A}_F
$$
holds with probability at least $1 - n^{-2d}$. The first part of Theorem~\ref{thm::main theorem of tensor sparsification} now follows by setting $s$ to the appropriate value.
For $d =2$, the upper bound for $M_2$ can be simplified:
$$
M_2 \leq c_7 8^d d \sqrt{n \ln^5 n}.
$$
Following the same steps as above, we also derive that
$$
\norm{\tensor A - \widetilde{\tensor A}}_2 \leq c'_{10} 20^d d \sqrt{\frac{n \ln^5 n}{s}}  \norm{\tensor A}_F
$$
holds with probability at least $1 - n^{-4}$. Theorem~\ref{thm::main theorem of tensor sparsification} now follows by setting $s$ to the appropriate value.
\section{Conclusions and open problems}

We presented the first provable bound for tensor sparsification with respect to the spectral norm. The main technical difficulty that we had to address in our work was the lack of measure concentration inequalities (analogous to the matrix-Bernstein and matrix-Chernoff bounds) for random tensors. To overcome this obstacle, we developed such an inequality using the so-called \textit{entropy-concentration tradeoff}. To the best of our knowledge, this is the first bound of its kind in the literature.

An interesting open problem would be to investigate whether there exist algorithms that, either deterministically or probabilistically, select elements of $\tensor A$ to include in $\tilde{\tensor A}$ and achieve much better accuracy than existing schemes. For example, notice that our algorithm, as well as prior ones, sample entries of $\tensor A$ with respect to their magnitudes; better sampling schemes might be possible. Improved accuracy will probably come at the expense of increased running time. Such algorithms would be very interesting from a mathematical and algorithmic viewpoint, since they will allow a better quantification of properties of a matrix/tensor in terms of its entries.

\vspace{0.02in}\noindent\textbf{Acknowledgements.} We would like to thank Prof. Dimitris Achlioptas for bringing~\cite{GT09} to our attention, as well as Tasos Zouzias for numerous useful discussions regarding our results. We also would be grateful to Prof. Roman Vershynin for the clarification regarding his papers. 

\bibliographystyle{plain} \bibliography{bibliog}
\section*{Appendix}

\noindent \textbf{Proof of Lemma \ref{lem::convert from probabilistic bound to expectation bound}.}

\begin{proof}
\textbf{(a)} From our assumption,
$$
\Prob(X \geq a + b (t + h)) \leq e^{- t}.
$$
Let $s = a + b (t + h)$. For any $q \geq 1$,
\begin{align*}
\E X^q &= \int_{0}^{\infty} \Prob(X \geq s) ds^q =  q \int_{0}^{\infty} \Prob(X \geq s) s^{q-1}  ds \\
&\leq q \int_{0}^{a + b h} s^{q-1} ds  + q \int_{a + b h}^{\infty} s^{q-1}  e^{- \frac{(s - a - b h)}{b}} ds.
\end{align*}
The first term in the above sum is equal to $\left(a + b h\right)^q$. The second term is somewhat harder to compute. We start by letting $g = a + b h$ and changing variables, thus getting
\begin{align*}
\int_{a + b h}^{\infty} s^{q-1}  e^{- \frac{(s - a - b h)}{b}} ds = b \int_0^{\infty} (g + b t)^{q-1} e^{-t} dt
&= b \sum_{i=0}^{q-1} \binom{q-1}{i} b^{q-1-i} g^i \int_0^{\infty} t^{q-1-i} e^{-t} dt.
\end{align*}
We can now integrate by parts and get
$$
\int_0^{\infty} t^{q-1-i} e^{-t} dt = (q - 1 - i) ! \leq q^{q - 1 - i} \quad \text{for all  } i = 0,...,q-1.
$$
Combining the above,
$$
q \int_{a + b h}^{\infty} s^{q-1}  e^{- \frac{(s - a - b h)}{b}} ds \leq qb \sum_{i=0}^{q-1} \binom{q-1}{i} (bq)^{q-1-i} g^i = qb (bq + g)^{q-1}.
$$
Finally,
$$
\E X^q \leq (a + b h)^q + bq (bq + g)^{q-1} \leq 2 (a + bh + bq)^q,
$$
which concludes the proof of the first part.

\noindent \textbf{(b)} From our assumption and since $t$ and $h$ are non-negative, we get
$$
\Prob(X \geq a + b (t + \sqrt{h})) \leq e^{- (t + \sqrt{h})^2 + h} \leq e^{-t^2}.
$$
Let $s = a + b \sqrt{h} + t b$. For any $q \geq 1$,
\begin{align*}
\E X^q &= \int_{0}^{\infty} \Prob(X \geq s) ds^q =  q \int_{0}^{\infty} \Prob(X \geq s) s^{q-1}  ds \\
&\leq q \int_{0}^{a + b \sqrt{h}} s^{q-1} ds  + q \int_{a + b \sqrt{h}}^{\infty} s^{q-1}  e^{- \frac{(s - a - b \sqrt{h})^2}{b^2}} ds.
\end{align*}
The first term in the above sum is equal to $\left(a + b \sqrt{h}\right)^q$. We now evaluate the second integral. Let $g = a + b \sqrt{h}$ and perform a change of variables to get
\begin{eqnarray*}
\int_{a + b \sqrt{h}}^{\infty} s^{q-1}  e^{- \frac{(s - a - b \sqrt{h})^2}{b^2}} ds &=& b \int_0^{\infty} (g + b t)^{q-1} e^{-t^2} dt \\
&=& b \sum_{i=0}^{q-1} \binom{q-1}{i} b^{q-1-i} g^i \int_0^{\infty} t^{q-1-i} e^{-t^2} dt.
\end{eqnarray*}
By integrating by parts we get (see below for a proof of eqn.~(\ref{eqn:integral_computation})):
\begin{equation}\label{eqn:integral_computation}
\int_0^{\infty} t^{q-1-i} e^{-t^2} dt \leq \sqrt{\frac{\pi}{2}} \left(\frac{q-1-i}{2}\right)^{(q-1-i)/2} \leq \sqrt{\frac{\pi}{2}} \left(\frac{q}{2}\right)^{(q-1-i)/2}.
\end{equation}
Thus, using $g = a + b\sqrt{h}$,
$$
\int_{a + b \sqrt{h}}^{\infty} s^{q-1}  e^{- \frac{(s - a - b \sqrt{h})^2}{b^2}} ds \leq b\sqrt{\frac{\pi}{2}} \sum_{i=0}^{q-1} \binom{q-1}{i} \left(b\sqrt{\frac{q}{2}}\right)^{q-1-i} g^i \leq \sqrt{2}b\left(a + b \sqrt{h} + b \sqrt{\frac{q}{2}}\right)^{q-1}.
$$
Finally, we conclude that
\begin{eqnarray*}
\E X^q &\leq& \left(a + b \sqrt{h}\right)^q + \sqrt{2}bq \left(a + b \sqrt{h} + b\sqrt{\frac{q}{2}}\right)^{q-1}\\
&\leq& \left(a + b \sqrt{h} + b\sqrt{\frac{q}{2}}\right)^q + \sqrt{4q} \left(a + b \sqrt{h} + b\sqrt{\frac{q}{2}}\right)^{q}\\
&\leq& 3\sqrt{q}\left(a + b \sqrt{h} + b\sqrt{\frac{q}{2}}\right)^q,
\end{eqnarray*}
which is the claim of the lemma. In the above we used the positivity of $a,b,$ and $h$ as well as the fact that $1 + \sqrt{4q} \leq 3\sqrt{q}$ for all $q \geq 1$.
\end{proof}

\noindent \textbf{Proof of eqn.~(\ref{eqn:integral_computation}).}

\begin{proof}
We now compute the integral $\int_{0}^{\infty} t^q e^{-t^2} dt$. Integrating by parts, we get
\begin{eqnarray*}
\int_{0}^{\infty} t^q e^{-t^2} dt &=& \frac{1}{2} \int_{0}^{\infty} - t^{q-1} d e^{-t^2}\\
& =& -\frac{1}{2} t^{q-1} e^{-t^2} |_0^{\infty} + \frac{1}{2} \int_{0}^{\infty} e^{-t^2} d t^{q-1} \\
&= &\frac{1}{2} (q-1) \int_{0}^{\infty} t^{q-2} e^{-t^2} dt.
\end{eqnarray*}
When $q$ is even, we get
$$
\int_{0}^{\infty} t^q e^{-t^2} dt = \left( \frac{1}{2} \right)^{q/2} (q-1) !! \int_{0}^{\infty} e^{-t^2} dt = \sqrt{\frac{\pi}{2}} \left( \frac{1}{2} \right)^{q/2} (q-1) !!.
$$
where $q !! = q (q-2) (q-4)\cdots $. If $q$ is odd, then
$$
\int_{0}^{\infty} t^q e^{-t^2} dt = \left( \frac{1}{2} \right)^{\lfloor q/2 \rfloor} (q-1) !! \int_{0}^{\infty} t e^{-t^2} dt = \left( \frac{1}{2} \right)^{\lfloor q/2 \rfloor + 1} (q-1) !! .
$$
We thus conlude
\begin{align*}
\int_{0}^{\infty} t^q e^{-t^2} dt \leq  \sqrt{\frac{\pi}{2}} \left( \frac{1}{2} \right)^{\lfloor q/2 \rfloor} (q-1) !! \leq \sqrt{\frac{\pi}{2}} \left( \frac{q-1}{2} \right)^{\lfloor q/2 \rfloor} \leq  \sqrt{\frac{\pi}{2}} \left( \frac{q-1}{2} \right)^{q/2} .
\end{align*}
\end{proof}

\noindent \textbf{Proof of Lemma \ref{prop::dicretize of norm of a tensor A}.}

\begin{proof}
We start by noting that every vector $z \in B$ can be written as $z = x + h$, where $x$ lies in $\N$ and $h \in \epsilon B$. Using the triangle inequality for the tensor spectral norm, we get
$$
\sup_{z \in B} \norm{\tensor A \times_1 z}_2 \leq \sup_{ x \in \N} \norm{\tensor A \times_1 x}_2 + \sup_{h \in \epsilon B} \norm{\tensor A \times_1 h}_2.
$$
\noindent It is now easy to bound the second term in the right-hand side of the above equation by $\epsilon \sup_{z \in B} \norm{\tensor A \times_1 z}_2$. Thus,
$$
\sup_{z \in B} \norm{\tensor A \times_1 z}_2 \leq \frac{1}{1-\epsilon} \sup_{x \in \N} \norm{\tensor A \times_1 x}_2.
$$
\noindent Repeating the same argument recursively for the tensor $\tensor A \times_1 x$ etc. we obtain the lemma.
\end{proof}

%\noindent \textbf{Proof of Lemma \ref{lem::bound sum of power of q}.}
%
%\begin{proof}
%%
%Let $a$ be the $n$-dimensional vector of the $a_i$'s. The lemma can be restated as follows:
%%
%\begin{eqnarray}\label{eqn:pdd2}
%%
%\left\|a\right\|_1^q &\leq& n^{q-1}\left\|a\right\|_{q}^q
%%
%\end{eqnarray}
%%
%The lemma obviously holds for $q=1$, so we only need to prove it for $q>1$. We will use the following property of vector norms: if $p > r > 0$, then $\left\|a\right\|_r \leq n^{\left(r^{-1}-p^{-1}\right)}\left\|a\right\|_p$. Applying it for $p=q>1$ and $r=1$, we get:
%%
%\begin{eqnarray}\label{eqn:pdd3}
%%
%\left\|a\right\|_1 &\leq& n^{1-q^{-1}}\left\|a\right\|_{q},
%%
%\end{eqnarray}
%%
%and the lemma follows immediately by raising both sides to the power $q$.
%%
%\end{proof}

\noindent \textbf{Proof of Lemma \ref{lem::bound expected max_i sum_j delta_ij}.}

\begin{proof}
Let $S = \max_{i_2,\ldots,i_d} \sum_{i_1=1}^n \delta_{i_1...i_d}$. We will first estimate the probability $\Prob (S \geq t)$ and then apply Lemma \ref{lem::convert from probabilistic bound to expectation bound} in order to bound the expectation $\E S^q$. Recall from the definition of $\delta_{i_1...i_d}$ that $\E \left(\delta_{i_1...i_d} - p_{i_1...i_d}\right)=0$ and let
$$X = \sum_{i_1=1}^n \left(\delta_{i_1...i_d} - p_{i_1...i_d}\right).$$
We will apply Bennett's inequality in order to bound $X$. Clearly $\abs{\delta_{i_1...i_d} - p_{i_1...i_d}} \leq 1$ and
\begin{align*}
\V (X) = \sum_{i_1=1}^n \V \left(\delta_{i_1...i_d} - p_{i_1...i_d}\right) &= \sum_{i_1=1}^n \E \left(\delta_{i_1...i_d} - p_{i_1...i_d}\right)^2 \\
&= \sum_{i_1=1}^n \left(p_{i_1...i_d}- p_{i_1...i_d}^2\right) \leq \sum_{i_1=1}^n p_{i_1...i_d}.
\end{align*}
Recalling the definition of $p_{i_1...i_d}$ and the bounds on the $\tensor A_{i_1...i_d}$'s, we get
$$
\V \left(X\right) \leq \sum_{i_1=1}^n \frac{s \tensor A^2_{i_1...i_d}}{\norm{\tensor A}^2_F} \leq n 2^{-(k-1)}.
$$
We can now apply Bennett's inequality in order to get
$$
\Prob( X > t ) = \Prob\left( \sum_{i_1=1}^n \delta_{i_1...i_d} > \sum_{i_1=1}^n p_{i_1...i_d} + t\right ) \leq e^{-t/2},
$$
for any $t \geq 3 n 2^{-(k-1)} / 2$. Thus, with probability at least $1-e^{-t/2}$,
$$
\sum_{i_1=1}^n \delta_{i_1...i_d} \leq  n 2^{-(k-1)} + t,
$$
since $\sum_{i_1=1}^n p_{i_1...i_d} \leq n 2^{-(k-1)}$. Setting $t = \left(3 n 2^{-(k-1)}/2\right) + 2\tau$ for any $\tau \geq 0$ we get
$$\Prob \left( \sum_{i_1=1}^n \delta_{i_1...i_d} \geq  \frac{5}{2} n 2^{-(k-1)} + 2 \tau \right) \leq e^{-\tau}.$$
Taking a union bound yields
$$
\Prob \left( \max_{i_2,\ldots,i_d} \sum_{i_1=1}^n \delta_{i_1...i_d} \geq  5 n 2^{-k} + 2 \tau \right) \leq n^{d-1}e^{-\tau} = e^{-\tau + (d-1)\ln n},
$$
where the $n^{d-1}$ term appears because of all possible choices for the indices $i_2,\ldots,i_d$. Applying Lemma \ref{lem::convert from probabilistic bound to expectation bound} with $a = 5 n 2^{-k}$, $b = 2$, and $h = (d-1)\ln n$, we get
\begin{equation}\label{eqn:ppp111}
\E \left(\max_{i_2,\ldots,i_d} \sum_{i_1=1}^n \delta_{i_1...i_d}\right)^q \leq 2 \left(5 n 2^{-k} + 2(d-1)\ln n + 2 q\right)^q \leq
2 \left(5 n 2^{-k} + 2d\ln n + 2q \right)^q.
\end{equation}
The proof is completed.

%We now note that, clearly, $5 n 2^{-k} \leq 5 n^{d/2} 2^{-k}$ for all $d \geq 2$. Also, using our assumption $d \leq 0.5\ln n$ as well as $k \leq \left\lfloor \log_2 \left(n^{d/2}/\ln^2 n\right)\right \rfloor$, we can prove that
%%
%$$2d\ln n \leq n^{d/2}2^{-k}.$$
%%
%Substituting the above bounds in eqn.~(\ref{eqn:ppp111}) concludes the proof of the lemma.

\end{proof} 

\end{document}